\documentclass[12pt]{amsart}

\setlength{\topmargin}{-0.5cm}
\setlength{\textwidth}{15cm}
\setlength{\textheight}{22.6cm}
\setlength{\topmargin}{-0.25cm}
\setlength{\headheight}{1em}
\setlength{\headsep}{0.5cm}
\setlength{\oddsidemargin}{0.40cm}
\setlength{\evensidemargin}{0.40cm}

\usepackage{amscd} 
\usepackage{amsmath} 
\usepackage{amssymb} 
\usepackage{amsthm}
\usepackage{bigdelim}
\usepackage{color} 
\usepackage{enumerate}
\usepackage{graphicx}
\usepackage{mathrsfs}
\usepackage{multirow}
\usepackage[all]{xy} 
\usepackage[dvipdfmx]{hyperref}
\usepackage{comment}

\newtheorem{thm}{Theorem}[section] 

\newtheorem{cor}[thm]{Corollary}
\newtheorem{prop}[thm]{Proposition}
\newtheorem{conj}[thm]{Conjecture}

\newtheorem{lem}[thm]{Lemma}
\theoremstyle{definition} 
\newtheorem{defn}[thm]{Definition}

\newtheorem{thmdefn}[thm]{Theorem-Definition} 
 
\newtheorem{ex}[thm]{Example} 
\theoremstyle{remark}
\newtheorem{rem}[thm]{Remark}

\newtheorem{ques}[thm]{Question}
\newtheorem{problem}[thm]{Problem}

\newtheorem{claim}{Claim}
\newtheorem{step}{Step}

\newcommand{\Sym}{{\rm Sym}}

\title[Almost nef regular foliations and Fujita's decomposition]
{Almost nef regular foliations  and \\ Fujita's decomposition of reflexive sheaves}

\author{Masataka IWAI}
\address{Osaka City University Advanced Mathematical Institute, Osaka City University, 3-3-138, Sugimoto, Sumiyoshi-ku Osaka, 558-8585
Japan} 
\email{{\tt masataka@sci.osaka-cu.ac.jp, masataka.math@gmail.com}}

\date{\today, version 0.01}

\subjclass[2010]{Primary 14D06, Secondary 32M25, 14M22, 14E30}
\keywords
{Almost nef, Pseudo-effective, Generically ample, Singular hermitian metrics, Numerically flatness, Foliation, Algebraically integrable foliation, Fujita's decomposition, Slope, hermitian flatness, Anti-canonical bundle, 
Rational curves, MRC fibrations, Rationally connected, Numerical dimension. Weakly positivity theorem}

\baselineskip = 15pt
\footskip = 32pt

\begin{document}
\maketitle
\begin{abstract}
In this paper, we study almost nef regular foliations.
We give a structure theorem of a smooth projective variety $X$ with an almost nef regular foliation $\mathcal{F}$:
$X$ admits a smooth morphism $f: X \rightarrow Y$ with rationally connected fibers such that $\mathcal{F}$ is a pullback of a numerically flat regular foliation on $Y$.
Moreover, $f$ is characterized as a relative MRC fibration of an algebraic part of $\mathcal{F}$.
As a corollary, an almost nef tangent bundle of a rationally connected variety is generically ample.
For the proof, we generalize Fujita's decomposition theorem.
As a by-product, we show that a reflexive hull of $f_{*}(mK_{X/Y})$ is a direct sum of a hermitian flat vector bundle and a generically ample reflexive sheaf for any algebraic fiber space $f : X \rightarrow Y$.
We also study foliations with nef anti-canonical bundles.

\end{abstract}
\tableofcontents

\section{Introduction}
The structure theorem of smooth projective varieties with nef tangent bundles was established by
Campana-Peternell and Demailly-Peternell-Schneider in \cite{CP} and \cite{DPS} after the Hartshorne conjecture had been solved by Mori in \cite{Mori}.
These works suggest that the structures of algebraic varieties are restricted when the tangent bundles have algebraic positivities.
Recently, these related studies have been extensively investigated by \cite{LP}, \cite{Iwai}, \cite{HIM}, and so on.

On the other hand, 
even if a tangent bundle $T_X$ of a smooth projective variety $X$ 
contains a subsheaf $\mathcal{F} $ with some algebraic positivity, then the structure of  $X$ is expected to be restricted.
In \cite{AW} and \cite{Liu}, 
if $T_X$ contains a rank $r$ ample subsheaf $\mathcal{F}$, 
then $X$ is isomorphic to $\mathbb{CP}^{n}$ and $\mathcal{F}$ is isomorphic to either $T_{\mathbb{CP}^n}$ or $\mathcal{O}_{\mathbb{CP}^n}(1)^{\oplus r}$.
In a recent paper \cite{LOY20}, 
if $T_X$ contains a rank $r$ strictly nef locally free subsehaf $\mathcal{F}$,
then $X$ admits a $\mathbb{CP}^{d}$-bundle structure $f :  X \rightarrow Y$ for some integer $d \ge r$ such that either $\mathcal{F}$ is isomorphic to $ T_{X/Y}$, or $\mathcal{F}$ is numerically projectively flat and 
$\mathcal{F} |_{\mathbb{CP}^{d}} \cong \mathcal{O}_{\mathbb{CP}^d}(1)^{\oplus r}$ on any fiber of $f$.

In \cite{Pet}, Peternell proposed the following problem.
\begin{problem}\cite[Problem 4.18]{Pet}
\label{peternell_question}
Let $\mathcal{F} \subset T_X$ be a locally free (or reflexive) sheaf.
What can be said on the structure of $X$ if $\mathcal{F}$ is nef, pseudo-effective, or almost nef? 
\end{problem}

In this paper, we give a partial answer to Problem \ref{peternell_question}.

\begin{thm}[=Theorem \ref{psefregular} and Corollary \ref{ampleregular}]
\label{psef_thm}

Let $X$ be a smooth $n$-dimensional projective variety and 
$\mathcal{F} \subset T_X$ be a rank $r$ almost nef regular foliation with $c_1(\mathcal{F}) \neq 0$.
Then there exist a smooth morphism $f : X \rightarrow Y$ between smooth projective varieties and a numerically flat regular foliation $\mathcal{G} \subset T_Y$ on $Y$, such that all fibers of $f$ are rationally connected, 
$\mathcal{F}=f^{-1}\mathcal{G}$, and there is an exact sequence
\begin{equation*}
\xymatrix@C=25pt@R=20pt{
0\ar@{->}[r]&T_{X/Y} \ar@{->}[r]&\mathcal{F} \ar@{->}[r]
 & f^{*}\mathcal{G} \ar@{->}[r]&0. \\}
\end{equation*}

Moreover, the following statements hold.
\begin{enumerate}
\item If $\mathcal{F}$ is ample, then $X$ is isomorphic to $\mathbb{CP}^{n}$. 
\item If $\mathcal{F}$ is nef, then all fibers of $f$ are Fano.
\item If $\mathcal{F}$ is V-big, then $\mathcal{F} =T_{X/Y}$ and all fibers of $f$ are isomorphic to $\mathbb{CP}^{r}$.
\item If $\mathcal{F}$ is nef and V-big, then $X$ is isomorphic to $\mathbb{CP}^{n}$. 
\end{enumerate}

\end{thm}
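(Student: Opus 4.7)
The plan is to construct $f : X \to Y$ as the fibration associated with the algebraic part of $\mathcal{F}$ and then descend the quotient $\mathcal{F}/T_{X/Y}$ to a numerically flat foliation on $Y$ via the generalized Fujita decomposition announced in the abstract. Since $\mathcal{F}$ is almost nef with $c_1(\mathcal{F}) \neq 0$, the determinant $\det \mathcal{F}$ is a non-zero pseudo-effective line bundle, and via an analog of the Bogomolov--McQuillan/Miyaoka criterion adapted to almost nef foliations this forces the algebraic part $\mathcal{F}^{\mathrm{alg}} \subset \mathcal{F}$ to be non-trivial with rationally connected leaf closures. After a birational modification I may assume the induced fibration $f : X \to Y$ has smooth source and target and rationally connected general fibers.

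Next I would upgrade $f$ to a smooth morphism and identify $T_{X/Y} = \mathcal{F}^{\mathrm{alg}}$ globally. Since $\mathcal{F}$ is regular and the leaf closures are rationally connected, the fibers are equidimensional, giving smoothness of $f$; the short exact sequence
\[
0 \longrightarrow T_{X/Y} \longrightarrow \mathcal{F} \longrightarrow \mathcal{Q} \longrightarrow 0
\]
then sits inside $0 \to T_{X/Y} \to T_X \to f^{*}T_Y \to 0$, giving $\mathcal{Q} \hookrightarrow f^{*}T_Y$. On each rationally connected fiber $F$, $\det \mathcal{Q}|_F$ is pseudo-effective (from almost nefness) and anti-effective (as the determinant of a subbundle of a trivial bundle), hence trivial; combined with almost nefness this makes $\mathcal{Q}|_F$ numerically flat, and numerically flat bundles on rationally connected varieties are trivial. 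Descent along the smooth morphism $f$ then produces $\mathcal{G} \subset T_Y$ with $\mathcal{Q} = f^{*}\mathcal{G}$, and involutivity descends so that $\mathcal{G}$ is a regular foliation. The numerical flatness of $\mathcal{G}$ is the delicate point: I would apply the generalized Fujita decomposition for reflexive sheaves developed in the paper to split the relevant sheaf on $Y$ into a hermitian flat summand and a generically ample summand, then use the almost nefness of $f^{*}\mathcal{G}$ to annihilate the generically ample summand.

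For the four corollaries: if $\mathcal{F}$ is ample, then $f^{*}\mathcal{G}$ is simultaneously ample and numerically flat, forcing $\dim Y = 0$, and the Andreatta--Wi\'sniewski/Liu result cited in the introduction yields $X \cong \mathbb{CP}^{n}$. If $\mathcal{F}$ is nef, then $T_{X/Y}$, and hence $T_F$ on each fiber $F$, is nef; combined with rational connectedness, $F$ is Fano. If $\mathcal{F}$ is V-big, a non-trivial pullback summand $f^{*}\mathcal{G}$ cannot contribute to the volume, so $\mathcal{F} = T_{X/Y}$ and each fiber $F$ has V-big tangent bundle, yielding $F \cong \mathbb{CP}^{r}$ by Liu's theorem. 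Case (4) follows by combining (2) and (3) with the rigidity of $\mathbb{CP}^r$-bundles whose relative tangent sheaf extends to a nef V-big foliation on the total space, collapsing $Y$ to a point. The principal obstacle is the numerical flatness descent in the middle paragraph: almost nefness of $\mathcal{F}$ alone yields only pseudo-effectivity of $\det \mathcal{G}$, and extracting the full numerical flatness of $\mathcal{G}$ requires the sharp form of the generalized Fujita decomposition, which is one of the main technical innovations of the paper.
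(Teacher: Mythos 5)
There is a genuine gap, and it lies right at the start. You propose to take $f$ to be the fibration associated with the \emph{algebraic part} $\mathcal{F}^{\mathrm{alg}}$ of $\mathcal{F}$ and assert that almost nefness with $c_1(\mathcal{F})\neq 0$ forces the algebraic part to have rationally connected leaf closures, so that $T_{X/Y}=\mathcal{F}^{\mathrm{alg}}$ after making $f$ smooth. This is false. Take $X = E\times\mathbb{P}^1$ with $E$ an elliptic curve, and $\mathcal{F}=T_X$. Then $\mathcal{F}$ is nef (hence almost nef), regular, and $c_1(\mathcal{F})=p_2^*c_1(T_{\mathbb{P}^1})\neq 0$. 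The whole of $X$ is tangent to $\mathcal{F}$, so $\mathcal{F}^{\mathrm{alg}}=T_X$ with leaf closure $X$, which is not rationally connected. The morphism the theorem produces in this example is $p_1: X\to E$ with $T_{X/Y}=p_2^*T_{\mathbb{P}^1}$, which is strictly smaller than the algebraic part. More generally, $\mu_\alpha^{\min}(\mathcal{F})\geq 0$ is all that almost nefness gives you, and Campana--P\u{a}un (Theorem~\ref{CP15}) requires the strict inequality $\mu_\alpha^{\min}>0$ to conclude rationally connected leaves; this strict positivity need not hold for $\mathcal{F}$ itself nor for its algebraic part.

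The paper resolves this by applying the Fujita-type decomposition (Theorem-Definition~\ref{Fujita_positive}) to the almost nef bundle $\mathcal{F}$ \emph{at the outset}, extracting the positive part $G\subset\mathcal{F}$ for which $\mu_\alpha^{\min}(G)>0$ holds by construction, with numerically flat quotient $Q=\mathcal{F}/G$. Then Campana--P\u{a}un and H\"oring's result give the smooth morphism $f$ with $G=T_{X/Y}$, and the quotient descends via Lemma~\ref{decent} to a regular foliation $\mathcal{G}$ on $Y$ with $f^*\mathcal{G}=Q$ already numerically flat. You instead defer the Fujita decomposition to the end to get numerical flatness of $\mathcal{G}$, but by that point you have already built the fibration from the wrong subfoliation and cannot recover the rational connectedness of the fibers. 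The relation between the positive part and the algebraic part is nontrivial -- it is the content of Theorem~\ref{posiotive_mrc}: the positive part is the foliation induced by the \emph{relative MRC fibration} of the algebraic part -- so if you insist on starting from the algebraic part you would need to run that intermediate reduction rather than identify the two. The remainder of your argument (the descent of $\mathcal{Q}$ using triviality of numerically flat bundles on rationally connected fibers, and the four applications) is broadly in line with the paper, although (4) in the paper relies on a combination of results of Druel, Ejiri, Fulger--Murayama and Iwai rather than a bare ``rigidity of $\mathbb{CP}^r$-bundles'' argument.
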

Furthermore, $f$ is characterized as a relative MRC fibration of an algebraic part of $\mathcal{F}$ (see Theorem \ref{posiotive_mrc}).
We point out that a foliation $\mathcal{F}$ is regular if $\mathcal{F}$ is a subbundle of $T_X$, and that $f^{-1}\mathcal{G}$ is a pull-back of $\mathcal{G}$ via $f$ (see Chapter 2.2).
Notice that (1) in Theorem \ref{psef_thm} has already been proved by \cite{AW} and \cite{Liu}.
In \cite[Theorem 3.9]{Liu}, if a subsheaf $\mathcal{F} \subset T_X$ is ample, then a saturation of $\mathcal{F}$ in $T_X$ is an algebraically integrable foliation.

In \cite[Theorem 1.8]{HP19}, if a reflexive sheaf $\mathcal{F}$ is almost nef and $c_1(\mathcal{F}) = 0$, then $\mathcal{F}$ is numerically flat. 
Therefore the classifications of almost nef regular foliations
can be reduced to the classifications of regular foliations with vanishing Chern classes.
These foliations were deeply studied
in \cite{PT13}, \cite{Druel16}, and so on.
In Section \ref{Classification}, we classify almost nef regular foliations on smooth projective surfaces.

We have the following corollary.
This is a partial answer to a conjecture by Peternell in \cite[Conjecture 4.12]{Pet1}.

\begin{cor}[=Corollary \ref{psef_example}]
\label{almost_nef_tangent}
Let $X$ be a smooth $n$-dimensional projective variety and
$\mathcal{F} \subset T_X$ be a foliation on $X$.
Assume that $T_X$ is almost nef.
Then $\mathcal{F}$ is induced by a smooth MRC morphism if and only if $\mu_{\alpha}^{min}(\mathcal{F}) > 0$ and $K_{\mathcal{F}} \equiv K_X$, where $\alpha = H_1 \cdots H_{n-1}$ for some ample divisors $ H_1, \ldots, H_{n-1}$ on $X$.

In particular, if $T_X$ is almost nef and $X$ is rationally connected, then $T_X$ is $(H_1, \ldots, H_{n-1})$-generically ample for any ample divisors $H_1, \ldots, H_{n-1}$ on $X$.
\end{cor}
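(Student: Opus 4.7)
My plan is to derive Corollary~\ref{almost_nef_tangent} from Theorem~\ref{psef_thm}, the MRC characterization in Theorem~\ref{posiotive_mrc}, and the criterion of \cite[Theorem 1.8]{HP19} that an almost nef reflexive sheaf with $c_1\equiv 0$ is numerically flat. The ``in particular'' statement will then follow from the forward direction of the iff: when $X$ is rationally connected, the MRC fibration is the smooth morphism $X\to\mathrm{Spec}\,\mathbb{C}$ and the induced foliation is $\mathcal{F}=T_X$ with $K_\mathcal{F}=K_X$ trivially; so $\mu_\alpha^{\min}(T_X)>0$, which is precisely $(H_1,\dots,H_{n-1})$-generic ampleness of $T_X$.

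For the forward direction of the iff, suppose $\mathcal{F}=T_{X/Y}$ for a smooth MRC morphism $f\colon X\to Y$. From the exact sequence $0\to T_{X/Y}\to T_X\to f^{*}T_Y\to 0$ the quotient $f^{*}T_Y$ is almost nef, hence $T_Y$ is almost nef by descent along the surjection $f$; in particular $-K_Y$ is pseudo-effective. Since $Y$ is not uniruled (as the base of an MRC fibration), $K_Y$ is pseudo-effective, forcing $K_Y\equiv 0$ and hence $K_\mathcal{F}=K_{X/Y}\equiv K_X$. For the slope positivity $\mu_\alpha^{\min}(\mathcal{F})>0$, I would apply Theorem~\ref{psef_thm} to $T_X$ (which is almost nef, with $c_1(T_X)\neq 0$ whenever $X$ admits a nontrivial rationally connected fibration). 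The resulting structure morphism coincides with $f$ by the MRC characterization, the associated foliation on $Y$ is numerically flat and equals $T_Y$, and the exact sequence in Theorem~\ref{psef_thm} isolates all positivity on the relative piece, yielding $\mu_\alpha^{\min}(\mathcal{F})>0$.

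For the reverse direction, assume $\mu_\alpha^{\min}(\mathcal{F})>0$ and $K_\mathcal{F}\equiv K_X$. The quotient $Q=T_X/\mathcal{F}$ has $c_1(Q)\equiv K_\mathcal{F}-K_X\equiv 0$ and is almost nef as a quotient of $T_X$; by \cite[Theorem 1.8]{HP19} it is numerically flat, hence locally free, so $\mathcal{F}$ is a regular foliation. The numerical flatness of $Q$ transfers almost nefness from $T_X$ to $\mathcal{F}$ across $0\to\mathcal{F}\to T_X\to Q\to 0$, allowing Theorem~\ref{psef_thm} to apply: one obtains a smooth $f\colon X\to Y$ with rationally connected fibers and a numerically flat $\mathcal{G}\subset T_Y$ with $\mathcal{F}=f^{-1}\mathcal{G}$. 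The hypothesis $\mu_\alpha^{\min}(\mathcal{F})>0$ rules out any numerically flat quotient of $\mathcal{F}$, forcing $\mathcal{G}=0$ and $\mathcal{F}=T_{X/Y}$. Theorem~\ref{posiotive_mrc} then identifies $f$ as the MRC of $X$, since the algebraic part of $\mathcal{F}=T_{X/Y}$ coincides with $\mathcal{F}$ itself and has rationally connected leaves.

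The main obstacle is establishing that $\mathcal{F}$ is almost nef under the slope and canonical hypotheses of the reverse direction, since neither condition alone seems to suffice; the key input is the numerical flatness of the complementary quotient $T_X/\mathcal{F}$, which allows almost nefness to descend from $T_X$ to $\mathcal{F}$ through the short exact sequence. The mirror issue in the forward direction---extracting $\mu_\alpha^{\min}(T_{X/Y})>0$ from the MRC structure---rests on the same splitting principle, with the numerical flatness of $f^{*}T_Y$ concentrating the positive part of $T_X$ onto the relative tangent sheaf.
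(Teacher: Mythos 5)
Your overall strategy agrees with the paper's: identify $\mathcal{F}$ with the positive part of $T_X$ (from Theorem-Definition~\ref{Fujita_positive}) and invoke Theorem~\ref{posiotive_mrc} to recognize it as $T_{X/Y}$ for a smooth MRC morphism. The paper compresses this to a three-way equivalence with the condition ``$\mathcal{G}$ is a positive part of $T_X$'' as the middle term, then says the equivalence is clear; you unfold it into a direct two-way argument. That is a legitimate variant, but there is a genuine gap in your reverse direction, and the detour you take is what creates it.

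You write that $Q = T_X/\mathcal{F}$ has $c_1(Q)\equiv 0$, is almost nef, and hence is numerically flat by \cite[Theorem 1.8]{HP19}, from which you deduce $Q$ is locally free and $\mathcal{F}$ is regular. But Theorem~\ref{almost_nef}(3) (equivalently, \cite[Theorem 1.8]{HP19}) requires the sheaf to be \emph{reflexive}; a saturated quotient of $T_X$ is torsion-free but in general not reflexive. Passing to $Q^{\vee\vee}$ gives a numerically flat bundle, but this does not show $Q=Q^{\vee\vee}$, nor that the generically surjective bundle map $T_X\to Q^{\vee\vee}$ is everywhere surjective, so one cannot conclude $\mathcal{F}$ is a subbundle of $T_X$ this way. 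Your whole almost-nefness transfer, and hence the applicability of Theorem~\ref{psefregular} to $\mathcal{F}$, hinges on this unestablished regularity. The paper's route avoids the issue entirely by comparing with the positive part $G$ built \emph{from $T_X$}: $\mu_\alpha^{\min}(\mathcal{F})>0$ forces $\mathcal{F}\to T_X/G$ to vanish (since $T_X/G$ is numerically flat, so $\mu_\alpha^{\max}(T_X/G)=0$), hence $\mathcal{F}\subset G$; and $K_\mathcal{F}\equiv K_X\equiv K_G$ gives $c_1(G/\mathcal{F})\cdot\alpha=0$, which contradicts $\mu_\alpha^{\min}(G)>0$ unless $G=\mathcal{F}$. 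Once $\mathcal{F}=G$, regularity, almost-nefness, and the smooth morphism all come from Theorem-Definition~\ref{Fujita_positive} and Theorem~\ref{posiotive_mrc} for free. Your forward direction and the ``in particular'' statement (via Proposition~\ref{MR}) are fine and match the paper.
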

By \cite{DPS01}, \cite{HP19}, and the argument of \cite[Theorem 1.1]{HIM}, if $T_X$ is almost nef, 
then we can take an MRC fibration of $X$ as a smooth morphism.
By Corollary \ref{almost_nef_tangent}, an MRC fibration is characterized by slopes and canonical divisors. 

For the proof of Theorem \ref{psef_thm}, we generalize Fujita's decomposition theorem in \cite{Fujita} and \cite{CD}.
The original Fujita's decomposition theorem states that, for any algebraic fiber space $f : X \rightarrow C$ to a smooth projective curve, $f_{*}(K_{X/C})$ is a direct sum of a unitary flat vector bundle and an ample vector bundle.
In \cite{CK}, Fujita's decomposition theorem over higher dimensional base was studied.
We establish a generalization of Fujita's decomposition theorem for almost nef vector bundles and reflexive coherent sheaves which have positively curved singular hermitian metrics.
As a by-product, we get the following theorem.
\begin{thm}[=Corollary \ref{Fujita_decomposition}]
\label{Fujita_thm}
Let $f : X \rightarrow Y$ be a surjective 
morphism with connected fibers from a compact K\"ahler manifold $X$ to a smooth $d$-dimensional projective variety $Y$.
For any positive integer $m$,  
there is a unique decomposition $$( f_{*}(mK_{X/Y}) )^{\vee\vee} \cong Q \oplus G,$$
where $Q$ is a hermitian flat vector bundle and $G$ is an $( H_1, \ldots, H_{d -1})$-generically ample reflexive coherent sheaf for any ample divisors $H_1, \ldots, H_{d -1}$ on $Y$.

In particular, if $Y$ is a curve, 
for any positive integer $m$, 
there is a unique decomposition $$ f_{*}(mK_{X/Y})  \cong Q \oplus G,$$
where $Q$ is a hermitian flat vector bundle and $G$ is an ample vector bundle.

\end{thm}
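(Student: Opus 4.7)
The plan is to derive Theorem \ref{Fujita_thm} as an immediate corollary of the paper's generalized Fujita decomposition for pseudo-effective reflexive sheaves, applied to the reflexive hull of $f_{*}(mK_{X/Y})$.

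The first step is to equip $\mathcal{E} := (f_{*}(mK_{X/Y}))^{\vee\vee}$ with a positively curved singular hermitian metric. For $f_{*}(mK_{X/Y})$ itself this is the positivity theorem of Berndtsson and P\u{a}un--Takayama for direct images of relative pluricanonical bundles. Since $f_{*}(mK_{X/Y})$ and its double dual agree outside a subset of codimension at least two, and positively curved singular hermitian metrics on torsion-free sheaves extend across such subsets, $\mathcal{E}$ inherits such a metric.

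The second step is to apply the paper's generalized Fujita decomposition to $\mathcal{E}$: any reflexive coherent sheaf on a smooth projective variety $Y$ equipped with a positively curved singular hermitian metric admits a unique decomposition $\mathcal{E} \cong Q \oplus G$ with $Q$ hermitian flat and $G$ generically ample with respect to every polarization $\alpha = H_{1} \cdots H_{d-1}$. This gives the first assertion of the theorem directly. Uniqueness is intrinsic: $Q$ is characterized as the maximal hermitian flat direct summand, equivalently as the piece assembled from the slope-zero graded pieces of the Harder--Narasimhan filtration with respect to any such $\alpha$, so it does not depend on the choice of polarization.

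The third step is the specialization to $\dim Y = 1$. Over a smooth curve the torsion-free sheaf $f_{*}(mK_{X/Y})$ is automatically locally free, so the reflexive hull disappears from the statement. The factor $G$ is then a vector bundle on a curve carrying a positively curved singular hermitian metric with $\mu^{\min}(G) > 0$. Running the Harder--Narasimhan filtration on $G$ and applying a Hartshorne-type criterion --- a semistable pseudo-effective vector bundle of strictly positive slope on a curve is ample --- upgrades each graded piece, and hence $G$ itself, to an ample vector bundle, recovering the classical form of Fujita's decomposition.

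The main obstacle is not Theorem \ref{Fujita_thm} per se but the generalized Fujita decomposition on which it rests. Extracting a genuine holomorphic direct summand, as opposed to a subsheaf in a filtration, from a reflexive sheaf carrying only a singular hermitian metric requires showing that the slope-zero piece of the Harder--Narasimhan filtration is actually a hermitian flat subbundle (not merely numerically flat as a subsheaf), that the singular metric restricts to a smooth flat metric there, and that the resulting short exact sequence splits holomorphically. The classical curve proofs of Fujita and Catanese--Dettweiler use monodromy of local systems underlying variations of Hodge structure, and lifting this to higher-dimensional bases for reflexive sheaves --- which is precisely what the generalized decomposition accomplishes --- is the technical heart of the paper and the only nontrivial ingredient behind Theorem \ref{Fujita_thm}.
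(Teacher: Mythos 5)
The proposal is correct and follows essentially the same route as the paper, which is itself a two-line reduction: Wang's positivity theorem gives a positively curved singular hermitian metric on $f_{*}(mK_{X/Y})$ (and hence on its reflexive hull, since the notion depends only on the dual sheaf), Theorem-Definition \ref{fujitadecom} supplies the direct-sum decomposition into a hermitian flat $Q$ and a $G$ with $\mu_{\alpha}^{\min}(G)>0$, and Proposition \ref{MR} converts the positive minimal slope into $(H_1,\dots,H_{d-1})$-generic ampleness (degenerating to Hartshorne's ampleness criterion when $d=1$, where $f_*(mK_{X/Y})$ is automatically locally free). One small caveat: since $X$ is only assumed compact K\"ahler, the relevant positivity result is Wang's Theorem B rather than the projective-case theorems of Berndtsson--P\u{a}un and P\u{a}un--Takayama that you cite; this is a citation slip, not a gap in the logic. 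Your identification of the technical heart --- the holomorphic splitting underlying Theorem-Definition \ref{fujitadecom}, which in the paper rests on the Harder--Narasimhan step of Lemma \ref{Fujitafiltration} together with the metric splitting theorem of Hosono--Iwai--Matsumura --- matches the paper's structure.
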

Notice that $( f_{*}(mK_{X/Y}) )^{\vee\vee}$ is a reflexive hull of $f_{*}(mK_{X/Y}) $.
Theorem \ref{Fujita_thm} is a generalization of Fujita's decomposition theorem in \cite{Fujita}, \cite{CD}, and \cite{CK}.

Finally,  we study foliations with nef anti-canonical bundles.
In \cite{Druel}, these foliations were studied in detail. 
Druel proposed the following question in \cite{Druel}.
\begin{ques} \cite[Question 7.4]{Druel}
\label{druel_conjecture}
Let $\mathcal{F}$ be a regular foliation on $X$.
If $-K_{\mathcal{F}}$ is nef, then do we have $\kappa(X, -K_{\mathcal{F}}) \le {\rm rk} \mathcal{F} $?
\end{ques}
In \cite[Theorem 8.8]{Druel}, if $-K_{\mathcal{F}}$ is nef and abundant, then $\kappa(X, -K_{\mathcal{F}}) \le {\rm rk} \mathcal{F} $ 
and equality holds only if $\mathcal{F}$ is algebraically integrable. We give an answer to Question \ref{druel_conjecture}. 

\begin{thm}[= Theorem \ref{rcfoliation}]
\label{rcfoliation_main}
Let $X$ be a smooth projective variety and $\mathcal{F} \subset T_X$ be a foliation on $X$.
Assume that $\mathcal{F}$ is regular or $\mathcal{F}$ has a compact leaf. 
If $-K_{\mathcal{F}}$ is nef, then $\kappa(X, -K_{\mathcal{F}}) \le {\rm rk} \mathcal{F} $.

Moreover, when the equality holds, the followings hold.
\begin{enumerate}
\item $\mathcal{F}$ is an algebraically integrable foliation with rationally connected leaves.
\item $-K_{F}$ is semiample and big for a general leaf $F$ of $\mathcal{F}$.
\item $-K_{\mathcal{F}}$ is semiample and abundant.
\end{enumerate}
\end{thm}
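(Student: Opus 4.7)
The plan is to reduce the equality case to algebraic integrability of $\mathcal{F}$, at which point the Fujita-type decomposition (Theorem \ref{Fujita_thm}) combined with Druel's abundant-case result \cite[Theorem 8.8]{Druel} furnishes the structural conclusions. The key new ingredient is exploiting the regular/compact-leaf hypothesis to bypass the need to assume abundance of $-K_{\mathcal{F}}$ up front.

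First, I would prove $\kappa(X, -K_{\mathcal{F}}) \leq \mathrm{rk}\,\mathcal{F}$ by showing that the Iitaka fibration of $-K_{\mathcal{F}}$ factors through the algebraic part $\mathcal{F}^a \subseteq \mathcal{F}$. In the compact-leaf case, deformations of the compact leaf $F$ (controlled by the nefness of $-K_F = (-K_{\mathcal{F}})|_F$) provide an algebraic covering family tangent to $\mathcal{F}$, hence an algebraic subfoliation, and one argues that sections of $-mK_{\mathcal{F}}$ cannot distinguish points on the same leaf of this family. In the regular case, one instead invokes integrability criteria of Bogomolov--McQuillan / Campana--P\u{a}un / Kebekus--Sol\'a Conde--Toma applied to rational curves tangent to $\mathcal{F}$ produced from $|{-mK_{\mathcal{F}}}|$ via bend-and-break. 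In either case one concludes $\kappa(X, -K_{\mathcal{F}}) \leq \mathrm{rk}\,\mathcal{F}^a \leq \mathrm{rk}\,\mathcal{F}$.

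When equality holds, $\mathcal{F}^a = \mathcal{F}$, so $\mathcal{F}$ is algebraically integrable; let $\pi : X \dashrightarrow Y$ be the associated rational map and $F$ a general fiber, of dimension $\mathrm{rk}\,\mathcal{F}$. Equality of Iitaka dimension and rank forces the restriction of the Iitaka map to $F$ to be generically finite, so $-K_F = (-K_{\mathcal{F}})|_F$ is both nef and big. The Boucksom--Demailly--P\u{a}un--Peternell theorem (or Zhang's theorem) yields rational connectedness of $F$, giving (1), while Kawamata's basepoint-free theorem gives semiampleness of $-K_F$, proving (2). To globalize semiampleness, I would apply Theorem \ref{Fujita_thm} to $\pi$ (suitably twisted) so that $(\pi_{*}\mathcal{O}_X(-mK_{\mathcal{F}}))^{\vee\vee}$ decomposes into a hermitian flat summand (trivializable after a finite \'etale cover of $Y$) and a generically ample summand; together with the fiberwise semiampleness just established, this yields semiampleness, hence abundance, of $-K_{\mathcal{F}}$ on $X$, proving (3).

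The main obstacle is the first step: one must bypass Druel's abundance hypothesis by leveraging the regular or compact-leaf structure together with nefness of $-K_{\mathcal{F}}$ to produce an algebraic subfoliation controlling the Iitaka dimension. The compact-leaf case depends on making the deformation theory quantitative enough, while the regular case requires that sections of $-mK_{\mathcal{F}}$ yield enough rational curves tangent to $\mathcal{F}$ through general points; both are delicate, and form the heart of the improvement over \cite[Theorem 8.8]{Druel}.
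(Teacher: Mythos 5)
The approach diverges from the paper's and has two genuine gaps.

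For the inequality $\kappa(X,-K_{\mathcal{F}})\le\mathrm{rk}\,\mathcal{F}$, the paper does not argue that sections of $-mK_{\mathcal{F}}$ ``cannot distinguish points on the same leaf''; that claim is not obviously true and you give no mechanism for it. What the paper actually does is cite Druel's result (Theorem~\ref{Druel_nef}, i.e.\ \cite[Claim 4.3]{Dsemi}) to produce an algebraically integrable subfoliation $\mathcal{G}\subset\mathcal{F}$ with rationally connected leaves and $K_{\mathcal{G}}\equiv K_{\mathcal{F}}$ — you are essentially trying to re-derive this, which is unnecessary — and then proves the new technical statement that $\nu(G,-K_G)=\nu(X,-K_{\mathcal{G}})$ for a general leaf $G$ (Proposition~\ref{eim}). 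The hard direction $\nu(G,-K_G)\ge\nu(X,-K_{\mathcal{G}})$ is obtained by a multiplicity bound along a general fiber (Claim~\ref{ejiri}, which uses the weak positivity theorem~\ref{weakly_positivity}) plus the Koszul filtration of $\mathcal{I}_{F_0}^{k-1}/\mathcal{I}_{F_0}^k$ to bound $h^0(X,A-mK_{\mathcal{F}})$ by data on the leaf. You have no substitute for this step; ``bend-and-break from $|{-mK_{\mathcal{F}}}|$'' and ``deformations of the compact leaf'' do not bound $\nu(X,-K_{\mathcal{F}})$ from above, since in general $\nu(F,L|_F)\le\nu(X,L)$ goes the wrong way.

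Your proof of (3) also does not work as stated. Theorem~\ref{Fujita_thm} decomposes $(\pi_*(mK_{X/Y}))^{\vee\vee}$, exploiting that $\pi_*(mK_{X/Y})$ carries a positively curved singular hermitian metric; there is no such statement for $\pi_*(-mK_{\mathcal{F}})$, which is an anticanonical-type direct image (note $K_{\mathcal{F}}\sim_{\mathbb{Q}}K_{X/Y}-\mathrm{Ram}(\pi)$, not $K_{X/Y}$), and no twist turns it into one. The paper instead observes that equality forces $\kappa=\nu$, so $-K_{\mathcal{F}}$ is nef and abundant, and then cites \cite[Corollary 8.4]{Druel} for semiampleness — the abundant case is precisely where Druel's earlier machinery applies. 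Your conclusion (1) likewise comes for free from $\mathcal{G}=\mathcal{F}$, since the leaves of $\mathcal{G}$ are already rationally connected by Theorem~\ref{Druel_nef}; appealing to Zhang/BDPP via nef-and-bigness of $-K_F$ is circular here because nef-and-bigness of $-K_F$ is itself the thing to be deduced from the numerical-dimension identity that your argument lacks.
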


{\bf Acknowledgment.} 
The author wishes to express his thanks to Prof. Shin-ichi Matsumura and Prof. Takayuki Koike for some discussions and helpful comments. 
He would like to thank Prof. St\'ephane Druel for answering his questions and for some helpful comments.
He also would like to thank Prof. Takeo Ohsawa for some useful advice.
The author wishes to express his thanks to anonymous referees for pointing out his mistakes.
The proof of Theorem \ref{posiotive_mrc} in the previous version 
is improved by their comments. 
The author was supported by the public interest incorporated foundation Fujukai and by Foundation of Research Fellows, The Mathematical Society of Japan.
This work was partly supported by Osaka City University Advanced Mathematical Institute (MEXT Joint Usage/Research Center on Mathematics and Theoretical Physics JPMXP0619217849).

\section{Preliminaries}
Throughout this paper, we work over the field $\mathbb{C}$ of complex numbers.
We denote by $\mathbb{N}_{>0}$ the set of positive integers
and denote $\mathcal{H}om(\mathcal{F}, \mathcal{O}_{X})$ 
by $\mathcal{F}^{\vee}$
for any torsion-free coherent sheaf $\mathcal{F}$ on any variety $X$.
We interchangeably use the words “Cartier divisors” 
and “line bundles”. 

In this chapter, we review some of the standard facts on foliations, slopes, and so on.
For more details, we refer the reader
to  \cite[Chapter 1]{Clauden}, \cite[Chapter 2 and 4]{Lazic}, and \cite[Appendices B]{Wang}.
\subsection{Foliations}
\text{}

Let $X$ be a normal variety and $T_X$ be a tangent sheaf of $X$. 
A \textit{$($singular$)$ foliation} is a saturated subsheaf $\mathcal{F} \subset T_X$ which is closed under the Lie bracket.
A foliation $\mathcal{F}$ is called \textit{regular} if $X$ is smooth and $\mathcal{F} $ is a subbundle of $T_X$.
If $X$ is projective, the \textit{canonical divisor} $K_{\mathcal{F}} $ of $\mathcal{F}$ is defined by a divisor such that $\mathcal{O}_{X}(K_{\mathcal{F}} ) \cong \det(\mathcal{F})^{\vee}$.
Let $X_{\mathcal{F}}$ denote the subset of the regular locus $X_{reg}$ on which $\mathcal{F} $ is a subbundle of $T_X$. 
A \textit{leaf} of $\mathcal{F}$ is the maximal connected, locally closed submanifold $L \subset X_{\mathcal{F}}$ such that $T_{L} = \mathcal{F}|_{L}$.
 A leaf $L$ of $\mathcal{F}$ is \textit{algebraic} if it is open in its Zariski closure $\overline{L}^{zar}$ and $\dim L = \dim \overline{L}^{zar}$.
A foliation $\mathcal{F}$ is called \textit{algebraically integrable}  if every leaf passing through a general point of $X$ is algebraic.

\begin{ex}
Let $f : X \rightarrow Y$ be a surjective morphism with connected fibers between normal projective varieties.
Then the kernel of the differential map $df : T_{X} \rightarrow f^{*}T_Y$
defines a foliation $\mathcal{F}$ on $X$. 
$\mathcal{F}$ is an algebraically integrable foliation and a general fiber of $f$ is a leaf of $\mathcal{F}$.
We say that \textit{$\mathcal{F}$ is induced by $f$.}
Similarly, a dominant rational map induces an algebraically integrable foliation.

\end{ex}

By the following lemma, an algebraically integrable foliation is induced by a dominant rational map.

\begin{lem}[{\cite[Section 2.13]{Druel}, \cite[Proposition B.19]{Wang}}]
\label{algebraic}
Let $X$ be a normal projective variety and 
$\mathcal{F}$ be an algebraically integrable foliation on $X$.
Then there exist a unique normal projective variety $Y$ contained in the normalization of the Chow variety of $X$, 
a projective variety $Z$ which is the normalization of the universal cycle, 
a birational morphism $\pi : Z \rightarrow X$, an equidimensional dominant morphism $f : Z \rightarrow Y$ with connected fibers, and an effective $\mathbb{Q}$-divisor $B$ on $Z$, such that 
 $\pi(f^{-1}(y)) \subset X$ is a Zariski closure of a leaf of $\mathcal{F}$ for a general point $y \in Y$ and $\pi^{*}K_{\mathcal{F}} \sim_{\mathbb{Q}} K_{\widetilde{\mathcal{F}}}+B$, where $\widetilde{\mathcal{F}}$ is the foliation induced by $f$.
 
\begin{equation*}
\xymatrix@C=25pt@R=20pt{
Z  \ar@{->}[d]_{\pi}  \ar@{->}[r]^{f}&  Y \\   
X& \\
}
\end{equation*}
In particular, $\widetilde{\mathcal{F}}$ coincides with $\pi^{*}\mathcal{F}$ on some Zariski open subset of $Z$.
\end{lem}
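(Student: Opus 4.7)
The plan is to build $Y$ and $Z$ from the Chow variety of $X$, recognize the family of leaf closures inside it, and then compare the two natural foliations living on $Z$. By hypothesis, for a general $x \in X$ the leaf through $x$ has Zariski closure of dimension $r := \mathrm{rk}\,\mathcal{F}$. These closures form an algebraic family, so the assignment $x \mapsto [\overline{L_x}^{zar}]$ defines a rational map from $X$ to the Chow variety $\mathrm{Chow}(X)$. I would take $Y$ to be the normalization of the Zariski closure of its image. The universal cycle over this closed image carries a natural projection to $X$; let $Z$ be its normalization, $\pi : Z \to X$ the tautological map, and $f : Z \to Y$ the parametrization.

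Next, I would verify the geometric properties. The map $\pi$ is birational because through a general point of $X$ passes exactly one leaf. The map $f$ is equidimensional because all of its fibers are normalizations of cycles of common dimension $r$; connectedness of a general fiber follows from irreducibility of a general leaf closure, and one replaces $f$ by its Stein factorization if necessary to propagate connectedness to all fibers. Finally, $\pi(f^{-1}(y))$ is the Zariski closure of a leaf for general $y \in Y$ directly by construction.

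To obtain the canonical-divisor relation, let $\widetilde{\mathcal{F}} := \ker(df : T_Z \to f^{*}T_Y)$ be the foliation induced by $f$. On a big open subset $U \subset Z$ over which $\pi$ is an isomorphism onto the locus $X_{\mathcal{F}}$ and $f$ is smooth, the two rank-$r$ subsheaves $\widetilde{\mathcal{F}}|_U$ and $(\pi^{-1}\mathcal{F})|_U$ of $T_U$ both coincide with the relative tangent bundle of $f|_U$, hence agree as subbundles. Saturating and taking determinants then yields an injection $\mathcal{O}_Z(-K_{\widetilde{\mathcal{F}}}) \hookrightarrow \pi^{*}\mathcal{O}_X(-K_{\mathcal{F}})$ that is generically an isomorphism, which produces an effective $\mathbb{Q}$-divisor $B$ with $\pi^{*}K_{\mathcal{F}} \sim_{\mathbb{Q}} K_{\widetilde{\mathcal{F}}} + B$.

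The main obstacle will be controlling the codimension-one behaviour on $Z$: one has to pin down where $\pi$ fails to be an isomorphism and where $f$ fails to be smooth, and then show that outside a subset of codimension at least two the two foliations $\widetilde{\mathcal{F}}$ and $\pi^{-1}\mathcal{F}$ differ only by an effective divisorial contribution that assembles into $B$. Because $X$ is merely normal, the singular loci of $X$ and of $\mathcal{F}$ themselves have codimension at least two, so the comparison of the two foliations can be reduced to computations on a smooth open subset of $Z$ mapping into $X_{\mathcal{F}}$; the precise coefficients of $B$ are then read off at each codimension-one point of $Z$, which is routine but requires careful normalization and ramification bookkeeping to ensure effectivity.
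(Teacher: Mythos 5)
The paper does not include a proof of Lemma~\ref{algebraic}; it is cited from \cite[Section 2.13]{Druel} and \cite[Proposition B.19]{Wang}. Your construction of $Y$, $Z$, $\pi$, and $f$ from the Chow variety and its normalized universal cycle is the standard one and matches those references, and the assertions that $\pi$ is birational, that $f$ is equidimensional with connected fibers, and that $\widetilde{\mathcal{F}}$ coincides with $\pi^{-1}\mathcal{F}$ on a dense open set are all fine.

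The crux of the lemma, however, is the effectivity of $B$, and there your argument fails. From the agreement of $\widetilde{\mathcal{F}}$ and $\pi^{-1}\mathcal{F}$ on a big open set you claim an injection $\mathcal{O}_Z(-K_{\widetilde{\mathcal{F}}}) \hookrightarrow \pi^{*}\mathcal{O}_X(-K_{\mathcal{F}})$ that is generically an isomorphism. But such an injection gives $\pi^{*}\mathcal{O}_X(-K_{\mathcal{F}}) \cong \mathcal{O}_Z(-K_{\widetilde{\mathcal{F}}}) \otimes \mathcal{O}_Z(D)$ with $D$ effective, i.e.\ $\pi^{*}K_{\mathcal{F}} \sim_{\mathbb{Q}} K_{\widetilde{\mathcal{F}}} - D$; your $B=-D$ would then be \emph{anti}-effective, the opposite of what the lemma asserts. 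This is not a cosmetic slip: for an arbitrary birational morphism $\pi\colon Z\to X$ between normal varieties the divisor $\pi^{*}K_{\mathcal{F}} - K_{\pi^{-1}\mathcal{F}}$ has no predetermined sign (it mixes an effective contribution coming from the conormal sheaves with the term $\pi^{*}K_X-K_Z$, whose sign is uncontrolled without hypotheses on singularities), and indeed Proposition~\ref{raynord_foliation} of the paper records the analogous identity for a general resolution with $\pi$-exceptional $E'$ that is \emph{not} claimed to be effective. The effectivity in Lemma~\ref{algebraic} therefore has to exploit the specific geometry of the normalized universal cycle, and cannot be dismissed as the ``routine bookkeeping'' you defer to at the end. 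As written, the canonical-divisor part of your proof is both incomplete and pointing in the wrong direction.
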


\begin{defn}[{\cite[Notation 2.7]{Druel}, \cite[Remark B.18]{Wang}}]
Let $f : X \rightarrow Y$ be a surjective morphism with connected fibers between smooth projective varieties, or an equidimensional dominant morphism with connected fibers between normal projective varieties.  
The \textit{ramification divisor of $f$} is defined by
$$Ram(f) := \sum_{D \subset Y} (f^{*}D - (f^{*}D)_{red}),$$
where $D$ runs through all prime divisors on $Y$.
\end{defn}

\begin{ex}[{\cite[Section 2.9]{Druel}, \cite[Remark B.18]{Wang}}]
\label{ramification_algebraic}
Let $f : X \rightarrow Y$ be an equidimensional dominant morphism with connected fibers between normal projective varieties and  $\mathcal{F}$ be a foliation induced by $f$.
Then
$$K_{\mathcal{F}} \sim_{\mathbb{Q}} K_{X/Y} - Ram(f)
.$$

\end{ex}

\begin{lem}
\label{ramification}
Let $X,Y,Z$ be smooth projective varieties and 
$f: X \rightarrow Y$, $g: Z \rightarrow Y$, $h: X \rightarrow Z$ be surjective equidimensional morphisms with connected fibers such that $f = g \circ h$.
\begin{equation*}
\xymatrix@C=25pt@R=20pt{
X \ar@{->}[dr]_{f}  \ar@{->}[r]^{h} &  Z  \ar@{->}[d]^{g} \\   
&Y \\
}
\end{equation*}
Then $Ram(h) + h^{*}Ram(g) - Ram(f)$ is effective.
\end{lem}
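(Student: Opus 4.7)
The plan is to verify the effectivity of $R := Ram(h) + h^{*}Ram(g) - Ram(f)$ by computing the coefficient of every prime divisor $E \subset X$ in $R$ and showing it is non-negative. Since all three morphisms are equidimensional, this reduces to a local calculation involving only ramification indices along a single divisor, and no global/numerical input is needed.

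The first step is to observe that, for any equidimensional surjection $\varphi : A \to B$ with connected fibers between smooth projective varieties, the image $\varphi(E)$ of a prime divisor $E \subset A$ has codimension at most one in $B$. Indeed, the general fiber of $\varphi|_E : E \to \varphi(E)$ has dimension $\dim E - \dim \varphi(E) = \dim A - 1 - \dim \varphi(E)$, but it is contained in a fiber of $\varphi$ of pure dimension $\dim A - \dim B$, forcing $\dim \varphi(E) \geq \dim B - 1$. Applying this simultaneously to $f$, $g$, $h$ lets me split the analysis into two cases according to the behaviour of $E$ under $f$.

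If $f(E) = Y$, then $E$ does not appear in $f^{*}D'$ for any prime divisor $D' \subset Y$, so its coefficient in $Ram(f)$ is zero; the coefficient of $E$ in $Ram(h) + h^{*}Ram(g)$ is automatically non-negative because each summand is an effective divisor, and the claim is immediate.

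If instead $f(E) = D$ is a prime divisor of $Y$, then $h(E) \subset g^{-1}(D) \subsetneq Z$, so by the same dimension bound $h(E) = C$ for some prime divisor $C \subset Z$ satisfying $g(C) = D$. Setting $a := m_h(E;C)$ and $b := m_g(C;D)$ for the corresponding ramification indices, the identity $f^{*}D = h^{*}g^{*}D$ forces $m_f(E;D) = ab$. Thus the coefficient of $E$ in $Ram(f)$ equals $ab - 1$; the coefficient of $E$ in $Ram(h)$ equals $a - 1$; and, because $C$ is the unique prime divisor of $Z$ whose pullback contains $E$ in its support, the coefficient of $E$ in $h^{*}Ram(g)$ equals $a(b-1) = ab - a$. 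Summing gives $(a-1) + (ab - a) - (ab - 1) = 0$, so in fact $R$ has coefficient zero at every such $E$. The main obstacle, modest in size, is only making the case analysis airtight — specifically ruling out the possibility that $f(E)$ or $h(E)$ has codimension $\geq 2$, which is precisely what the equidimensionality hypothesis supplies; beyond that, nothing subtler than the multiplicativity of ramification indices is needed.
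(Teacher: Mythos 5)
Your proof is correct and uses the same essential mechanism as the paper's: the multiplicative identity $(ab-1) = a(b-1) + (a-1)$ for ramification indices along a tower of equidimensional morphisms, applied coefficient-by-coefficient on prime divisors. The paper packages this by introducing an auxiliary divisor $R := \sum_{E \in \mathcal{I}} h^{*}E - (h^{*}E)_{red}$ (the part of $Ram(h)$ over divisors of $Z$ that dominate a divisor of $Y$) and proving $R = Ram(f) - h^{*}Ram(g) \le Ram(h)$, whereas you compute the coefficient of each prime divisor $E \subset X$ in $Ram(h) + h^{*}Ram(g) - Ram(f)$ directly and split according to whether $f(E)$ is a divisor — a slightly more streamlined presentation of the same calculation.
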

\begin{proof}

Let $\mathcal{I}$ denote the set of prime divisors $E \subset Z$ such that $g(E)$ is a divisor on $Y$. Set $R :=\sum_{E \in \mathcal{I}} h^{*}E - (h^{*}E)_{red} $, then $Ram(h) - R $ is effective.
We show that $R=Ram(f)- h^{*}Ram(g).$
For any $E \in \mathcal{I}$, write $D:=g(E)$.
Let $w \in \mathbb{N}_{>0}$ be the coefficient of $E$ in $g^{*}D$.
Set $h^{*}E = \sum d_{i}F_{i}$ for some $d_i \in \mathbb{N}_{>0}$ and some prime divisors $F_i$ on $X$.
Since the coefficient of $Ram(f)- h^{*}Ram(g)$ in $F_{i}$ is 
$(d_{i} w-1)-d_{i}(w-1) = d_{i} - 1$,
$Ram(f)- h^{*}(Ram(g)) \ge \sum (d_{i}-1 )F_{i}$.
Hence $Ram(f)- h^{*}Ram(g) -R  \ge 0$.
By a similar argument, $R- Ram(f) + h^{*}Ram(g) \ge 0$.
Therefore $Ram(h) + h^{*}Ram(g) - Ram(f) = Ram(h) - R $ is effective.
\end{proof}

By Lemma \ref{algebraic}, Example \ref{ramification_algebraic}, and the flattening theorem of \cite{Raynord},
we obtain the following proposition.

\begin{prop}\cite[Proposition 1.4]{Clauden}
\label{raynord_foliation}
Let $X$ be a smooth projective variety and 
$\mathcal{F}$ be an algebraically integrable foliation on $X$.
Then there exist a birational morphism $\pi : \widetilde{X} \rightarrow X$,
a surjective morphism $\widetilde{f} : \widetilde{X} \rightarrow Y$ with connected fibers between smooth projective varieties, and $\pi$-exceptional divisors $E, E'$ on $\widetilde{X}$, such that any $\tilde{f}$-exceptional divisor is $\pi$-exceptional and
$$
\pi^{*}K_{\mathcal{F}} \sim_{\mathbb{Q}} K_{\widetilde{X}/Y} -Ram(\widetilde{f}) + E \sim_{\mathbb{Q}} K_{\widetilde{\mathcal{F}}} +E',
$$
where $\widetilde{\mathcal{F}}$ is a foliation induced by $\widetilde{f}$.
\end{prop}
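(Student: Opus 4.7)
The plan is to combine Lemma \ref{algebraic} with resolutions of singularities and Example \ref{ramification_algebraic}, and then match $\pi^{*}K_{\mathcal{F}}$ with $K_{\widetilde{\mathcal{F}}}$ via the birational invariance of the canonical divisor of an algebraically integrable foliation.

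First I would apply Lemma \ref{algebraic} to $\mathcal{F}$ to obtain normal projective varieties $Z$ and $Y_0$, a birational morphism $\pi_0 : Z \to X$, an equidimensional dominant morphism with connected fibers $f_0 : Z \to Y_0$, and an effective $\mathbb{Q}$-divisor $B$ on $Z$ with $\pi_0^{*}K_{\mathcal{F}} \sim_{\mathbb{Q}} K_{\mathcal{F}_0} + B$, where $\mathcal{F}_0$ denotes the foliation on $Z$ induced by $f_0$. Next I choose a resolution $\tau : Y \to Y_0$ with $Y$ smooth and a birational morphism $\sigma : \widetilde{X} \to Z$ with $\widetilde{X}$ smooth that simultaneously resolves the indeterminacy of $\tau^{-1} \circ f_0$, so that there is a morphism $\widetilde{f} : \widetilde{X} \to Y$ with $\tau \circ \widetilde{f} = f_0 \circ \sigma$; a Stein factorization, if necessary, ensures $\widetilde{f}$ has connected fibers. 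Set $\pi := \pi_0 \circ \sigma : \widetilde{X} \to X$.

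Second, I verify that any $\widetilde{f}$-exceptional prime divisor is $\pi$-exceptional. The key observation is that an equidimensional dominant morphism between normal varieties has no exceptional prime divisor: if a prime divisor $D \subset Z$ satisfied $\dim f_0(D) < \dim Y_0 - 1$, a fiber dimension count against the constant relative dimension $\dim Z - \dim Y_0$ would be violated. Now let $D \subset \widetilde{X}$ be a prime divisor with $\dim \widetilde{f}(D) \leq \dim Y - 2$. Since $\tau$ is birational, codimensions do not decrease under $\tau$, so $f_0(\sigma(D)) = \tau(\widetilde{f}(D))$ has codimension $\geq 2$ in $Y_0$. By the previous remark $\sigma(D)$ must have codimension $\geq 2$ in $Z$, so $D$ is $\sigma$-exceptional and therefore $\pi$-exceptional.

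Finally, Example \ref{ramification_algebraic} applied to the morphism $\widetilde{f} : \widetilde{X} \to Y$ between smooth projective varieties yields
$$
K_{\widetilde{\mathcal{F}}} \sim_{\mathbb{Q}} K_{\widetilde{X}/Y} - \mathrm{Ram}(\widetilde{f}),
$$
where $\widetilde{\mathcal{F}}$ is the foliation induced by $\widetilde{f}$. On the dense open set where $\pi$ is an isomorphism, $\widetilde{\mathcal{F}}$ coincides with the restriction of $\mathcal{F}$ pulled back by $\pi$, so the $\mathbb{Q}$-divisor $\pi^{*}K_{\mathcal{F}} - K_{\widetilde{\mathcal{F}}}$ is supported on the $\pi$-exceptional locus and equals some $\pi$-exceptional $\mathbb{Q}$-divisor $E'$. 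Combining the two identities and setting $E := E'$ yields the claimed chain $\pi^{*}K_{\mathcal{F}} \sim_{\mathbb{Q}} K_{\widetilde{X}/Y} - \mathrm{Ram}(\widetilde{f}) + E \sim_{\mathbb{Q}} K_{\widetilde{\mathcal{F}}} + E'$. The main technical point I expect to require care is the codimension-tracking argument in the second paragraph, where the equidimensionality supplied by Lemma \ref{algebraic} (itself relying on Raynaud's flattening) is really used; the remaining steps are bookkeeping with standard foliation identities.
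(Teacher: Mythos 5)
Your construction of $\pi$, $\widetilde f$, and the codimension-tracking argument showing that $\widetilde f$-exceptional divisors are $\pi$-exceptional are correct, as is the observation that $\pi^{*}K_{\mathcal F}-K_{\widetilde{\mathcal F}}$ is $\pi$-exceptional. The gap is in the last step, where you apply Example~\ref{ramification_algebraic} directly to $\widetilde f\colon\widetilde X\to Y$. That example asserts $K_{\mathcal F}\sim_{\mathbb{Q}}K_{X/Y}-\mathrm{Ram}(f)$ only for an \emph{equidimensional} dominant morphism with connected fibers, and $\widetilde f$ is not equidimensional after your resolutions (this is exactly why the proposition has to stipulate that any $\widetilde f$-exceptional divisor is $\pi$-exceptional). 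For a non-equidimensional fibration between smooth varieties the identity fails: over the $\widetilde f$-exceptional locus the kernel foliation and the relative canonical disagree, and in fact $\mathrm{Ram}(\widetilde f)=\sum_{D\subset Y}\bigl(\widetilde f^{*}D-(\widetilde f^{*}D)_{\mathrm{red}}\bigr)$ taken literally is no longer a finite sum once $\widetilde f$-exceptional components are allowed to contribute, so the quantity you are equating with $K_{\widetilde{\mathcal F}}$ is not even well-defined before normalizing the situation.

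This is precisely the role of Raynaud's flattening theorem, which the paper lists as one of the three ingredients and which does not appear anywhere in your argument. One must first modify the base so that the induced fibration becomes flat, hence equidimensional, outside the $\pi$-exceptional locus; only then is $\mathrm{Ram}(\widetilde f)$ finite and the ramification formula of Example~\ref{ramification_algebraic} applicable, with the leftover discrepancy supported on $\pi$-exceptional divisors. A symptom of the missing flattening step is your forced identification $E=E'$: in general the two $\pi$-exceptional divisors in the statement differ exactly by the $\pi$-exceptional defect separating $K_{\widetilde{\mathcal F}}$ from $K_{\widetilde X/Y}-\mathrm{Ram}(\widetilde f)$, so claiming they can be taken equal is a consequence of the invalid application rather than a harmless simplification. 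To repair the proof you would need either to invoke Raynaud's flattening before resolving (the paper's route, following \cite{Clauden}), or to work through $f_{0}\colon Z\to Y_{0}$ where the example does apply and then control the comparison between $\sigma^{*}K_{Z/Y_{0}}$ and $K_{\widetilde X/Y}$ --- and the latter runs into the problem that $Z$, being the normalization of a universal cycle, need not be $\mathbb{Q}$-Gorenstein, so $\sigma^{*}K_{Z}$ is not available without further work.
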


\subsection{Algebraic parts and transcendental parts of foliations}
\text{}


First we define a pull-back of a foliation via a dominant rational map.
For more details, we refer the reader to \cite[Section 3.1, 3.2]{Druel18} and \cite[Proposition-Definition B.3, B.4]{Wang}.

Let $X$ be a normal $n$-dimensional variety and $\mathcal{F}$ be a rank $r$ foliation on $X$.
Set $N_{\mathcal{F} } := (T_X/\mathcal{F})^{\vee\vee}$.
From $N_{\mathcal{F} }^{\vee} \subset (\Omega_{X}^{1})^{\vee\vee}$, we obtain a $\det N_{\mathcal{F} }$-valued $n-r$ differential form
$\omega_{\mathcal{F}} \in H^{0}(X, (\Omega_{X}^{n-r}\otimes \det N_{\mathcal{F} } )^{\vee\vee})$.
$\omega_{\mathcal{F}}$ satisfies the following three conditions.
\begin{enumerate}
\renewcommand{\theenumi}{\Alph{enumi}}
\label{pfaff}
\item A codimension of a vanishing locus of $\omega_{\mathcal{F}}$ is at least two.
\item $\omega_{\mathcal{F}}$ is locally decomposable, that is, 
in a neighborhood $U$ of a general point of $X$, we have $\omega_{\mathcal{F}}|_{U} = \omega_1 \wedge \cdots \wedge \omega_{n-r}$ for some 1-forms $\omega_{1}, \cdots , \omega_{n-r}$ on $U$.
\item $\omega_{\mathcal{F}}$ is integrable, that is, $d\omega_{i} \wedge \omega_{\mathcal{F}} =0$ for any $1 \le i \le n-r$ under the condition (B).
\end{enumerate}

Conversely, for any rank 1 reflexive sheaf $\mathcal{L}$ on $X$ and 
$\omega \in H^0(X,(\Omega_{X}^{n-r}\otimes \mathcal{L})^{\vee\vee})$ which satisfies three conditions (A)-(C) above, a rank $r$ foliation is induced by
the kernel of the morphism $T_X \rightarrow (\Omega_{X}^{n-r-1}\otimes \mathcal{L})^{\vee\vee}$ given by the contraction of $\omega$.

Let $f : X \dashrightarrow Y$ be a dominant rational map to a normal $m$-dimensional  variety and  $\mathcal{G}$ be a rank $l$ foliation on $Y$.
Then there exist smooth Zariski open sets $X_0 \subset X$ and $Y_0 \subset Y$ such that 
$f_0 := f|_{X_0}: X_0  \rightarrow Y_0$ is a morphism.
By the above argument, we can take $\omega_{\mathcal{G}} \in H^{0}(Y, (\Omega_{Y}^{m-l}\otimes \det N_{\mathcal{G}})^{\vee\vee} )$. 
By shrinking $X_0$ and $Y_0$,
$\omega := df_{0}( \omega_{\mathcal{G}} |_{Y_0})$ 
satisfies the three conditions (A)-(C) above, 
thus $\omega$ induce a foliation $\mathcal{F}_0$ on $X_0$.
The \textit{pull-back $f^{-1}\mathcal{G}$ of $\mathcal{G}$ via $f$} is the foliation on $X$ whose restriction to $X_0$ is $\mathcal{F}_0$.

\begin{rem}
Let $X$ be a normal $n$-dimensional variety and 
$\mathcal{F}$ be a rank $r$ foliation on $X$.
By Frobenius Theorem, for any $x \in X_{\mathcal{F}}$, 
there exist a neighborhood  $U  \subset X_{\mathcal{F}}$ of $x$ and 
local generators $v_1, \ldots, v_n$ of $T_X$ on $U$
such that $\mathcal{F}|_{U}$ is generated by $v_1, \ldots, v_r$.
Then  
$\omega_{\mathcal{F}}|_{U} $ is identified with $ \alpha_{r+1} \wedge \cdots \wedge \alpha_{n}$, where $\alpha_{i}$ is a dual of  $v_i$ for $ r+1 \le i \le n$. 
\end{rem}

\begin{lem}[{\textit{cf.} \cite[Lemma 6.7]{AD14}}]
\label{decent}
Let $f : X \rightarrow Y$ be a smooth morphism with connected fibers between smooth projective varieties and $\mathcal{F}$ be a rank $r$ regular foliation on $X$. 
Assume that all fibers of $f$ are contained in the leaves of $\mathcal{F}$.
Then there exists a regular foliation $\mathcal{G}$ on $Y$ such that 
$\mathcal{F}=f^{-1}\mathcal{G}$ and 
\begin{equation*}
\xymatrix@C=25pt@R=20pt{
0\ar@{->}[r]& T_{X/Y}  \ar@{->}[r]&\mathcal{F}\ar@{->}[r]
 &f^{*}\mathcal{G}\ar@{->}[r]&0 .\\}
\end{equation*}
\end{lem}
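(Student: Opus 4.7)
Since every fibre of $f$ is contained in a leaf of $\mathcal{F}$, the relative tangent bundle $T_{X/Y}$ sits inside $\mathcal{F}$ as a subbundle. Plugging this into the short exact sequence $0\to T_{X/Y}\to T_X\xrightarrow{df} f^{*}T_Y\to 0$ attached to the smooth morphism $f$ shows that $df|_{\mathcal{F}}$ has kernel exactly $T_{X/Y}$ and identifies $\mathcal{Q}:=\mathcal{F}/T_{X/Y}$ with a rank $r-\dim(X/Y)$ subbundle of $f^{*}T_Y$. I would then show that $\mathcal{Q}=f^{*}\mathcal{G}$ for some subbundle $\mathcal{G}\subset T_Y$, and verify that $\mathcal{G}$ is closed under the Lie bracket. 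The claimed exact sequence is then the one just constructed, and the equality $\mathcal{F}=f^{-1}\mathcal{G}$ follows at once: by the contraction description of $f^{-1}\mathcal{G}$ recalled above, $f^{-1}\mathcal{G}=(df)^{-1}(f^{*}\mathcal{G})$ contains $\mathcal{F}$, and both sheaves have rank $r$.

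The heart of the argument is the descent of $\mathcal{Q}$. Work in a local coordinate chart $(x_1,\dots,x_k,y_1,\dots,y_m)$ on $X$ in which $f$ is projection to the $y$-variables, so $T_{X/Y}$ is generated by $\partial/\partial x_1,\dots,\partial/\partial x_k$. Extend this frame of $T_{X/Y}$ to a local frame of $\mathcal{F}$; after Gaussian elimination the new frame vectors may be taken in the form $s_a=\sum_j b_{aj}(x,y)\,\partial/\partial y_j$, $a=1,\dots,r-k$. Involutivity of $\mathcal{F}$ yields $[\partial/\partial x_i,s_a]=\sum_j(\partial b_{aj}/\partial x_i)\,\partial/\partial y_j\in\mathcal{F}$; expanding this in the frame $\{\partial/\partial x_l,s_b\}$ and matching coefficients forces the $\partial/\partial x_l$-part to vanish and the rows $(\partial b_{aj}/\partial x_i)_j$ to lie in the row span of the matrix $B=(b_{aj})$. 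Thus there are matrices $C_i(x,y)$ with $\partial B/\partial x_i=C_i\cdot B$. Solving this linear ODE in each $x_i$-direction shows that the row span of $B(\,\cdot\,,y)$ is locally constant in $x$, and connectedness of the fibres of $f$ then upgrades this to constancy along each whole fibre. Hence $\mathcal{Q}_p\subset T_{Y,f(p)}$ depends only on $f(p)$; defining $\mathcal{G}_y$ to be this common subspace and transporting the holomorphic structure via a local analytic section of $f$ (which exists because $f$ is smooth) exhibits $\mathcal{G}$ as a holomorphic subbundle of $T_Y$ with $\mathcal{Q}=f^{*}\mathcal{G}$.

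The remaining check that $\mathcal{G}$ is a foliation is formal: for local sections $v,w$ of $\mathcal{G}$, the surjection $\mathcal{F}\twoheadrightarrow f^{*}\mathcal{G}$ locally lifts them to sections $\tilde v,\tilde w$ of $\mathcal{F}$ that are $f$-related to $v,w$; then $[\tilde v,\tilde w]\in\mathcal{F}$ is $f$-related to $[v,w]$, so $[v,w]\in\mathcal{G}$. The hard part will be the descent step, where one has to convert the involutivity condition into the statement that $\mathcal{Q}$ is trivial along each fibre, and this requires carefully combining the Lie-bracket condition with the smoothness and connectedness of the fibres of $f$; once descent is accomplished, the remaining assertions are diagram chasing.
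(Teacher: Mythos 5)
Your proof is correct and follows the same overall plan as the paper—observe that $T_{X/Y}\subset\mathcal{F}$, descend the quotient $\mathcal{Q}=\mathcal{F}/T_{X/Y}\subset f^{*}T_Y$ to a subbundle $\mathcal{G}\subset T_Y$, verify involutivity of $\mathcal{G}$, and deduce $\mathcal{F}=f^{-1}\mathcal{G}$ from a rank count—but you implement the crucial descent step differently. The paper invokes the Frobenius theorem on each chart $U_\lambda$ meeting a fibre to obtain a local submersion $p_\lambda$ with $\ker(dp_\lambda)=\mathcal{F}|_{U_\lambda}$, pushes this forward along a local section $s_\lambda$ of $f$, and then relies on \cite[Lemma 6.7]{AD14} for the fact that $\ker\,d(p_\lambda\circ s_\lambda)$ is independent of $\lambda$ and glues to a foliation on $Y$. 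You instead unwind involutivity directly: writing the complementary frame of $\mathcal{F}$ as $s_a=\sum_j b_{aj}\,\partial/\partial y_j$ and expanding $[\partial/\partial x_i,s_a]\in\mathcal{F}$ forces $\partial B/\partial x_i = C_i B$, whose solution theory makes the row span of $B$ locally constant in the fibre directions and hence, by connectedness of the fibres, constant along each fibre. This is a more elementary, self-contained derivation of the descent (in effect reproving the portion of \cite[Lemma 6.7]{AD14} that the paper outsources), at the cost of some explicit computation. Your remaining checks—involutivity of $\mathcal{G}$ via $f$-related lifts, holomorphicity of $\mathcal{G}$ via a local analytic section, and $\mathcal{F}=f^{-1}\mathcal{G}$ by comparing ranks of saturated subsheaves—match the paper's in substance.
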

\begin{proof}

Set $\dim Y := m$, $k := \dim X - \dim Y $, and $n := \dim X = m+k $.
Fix $y \in Y$.
Let $V$ be a neighborhood of $y$ and 
$(w_{1}, \ldots, w_{m})$ be  local coordinates on $V$.
We may regard $y \in V$ as an origin.
We can take Euclidean open sets $U_{1}, \ldots, U_{N}$ of $X$
which satisfy the following conditions:
\begin{enumerate}
\item $X_y \subset \cup_{\lambda=1}^{N} U_{\lambda}$.
\item For any $1 \le \lambda \le N$, we can choose local coordinates $(z_{1}^{\lambda}, \ldots, z_{n}^{\lambda})$ on $U_{\lambda}$ such that
$$
\begin{array}{cccc}
f|_{U_{\lambda} } :&  U_{\lambda}    & \rightarrow& V       \\
&(z_{1}^{\lambda}, \ldots, z_{n}^{\lambda})                &    \mapsto  & (z_{1}^{\lambda}, \ldots, z_{m}^{\lambda}).
\end{array}
$$
In particular, $T_{X/Y} |_{U_{\lambda}}$ is generated by $\frac{\partial}{\partial z_{m+1}^{\lambda}}, \ldots, \frac{\partial}{\partial z_{n}^{\lambda}}$ on $U_{\lambda}$.
\item For any $1 \le \lambda \le N$, 
there exists a holomorphic morphism $p_{\lambda}$ on $U_{\lambda}$ such that 
$$
\begin{array}{cccc}
p_{\lambda}  :&  U_{\lambda}    & \rightarrow& \mathbb{C}^{n-r}      \\
&(z_{1}^{\lambda}, \ldots, z_{n}^{\lambda})                &    \mapsto  & (z_{1}^{\lambda}, \ldots,  z_{n-r}^{\lambda})
\end{array}
$$
and $\ker (dp_{\lambda}) = \mathcal{F}|_{U_{\lambda}}$. 
In particular, $\mathcal{F}|_{U_{\lambda}}$ is generated by
$\frac{\partial}{\partial z_{n-r+1}^{\lambda}}, \ldots, \frac{\partial}{\partial z_{n}^{\lambda}}$ on $U_{\lambda}$.
\end{enumerate}
Then there exists a local section  $s_{\lambda} : V \rightarrow U_{\lambda}$ such that
$$\begin{array}{cccc}s_{\lambda}  :&  V & \rightarrow& U_{\lambda}      \\&(w_{1}, \ldots, w_{m})                &    \mapsto  & (w_{1}, \ldots, w_{m}, 0, \ldots, 0)\end{array}
$$
for any $1 \le \lambda \le N$.
Set $\mathcal{G}_{V} := \ker (d(p_{\lambda}\circ s_{\lambda}))$, then $\mathcal{G}_{V}$ does not depend on  $\lambda$, and $\mathcal{G}_{V}$ glue and extend to an $r-k$ rank foliation $\mathcal{G}$ on $Y$
(see \cite[Lemma 6.7]{AD14}).
$\mathcal{G}_{V}$ 
is generated by $\frac{\partial}{\partial w_{n-r+1}}, \ldots, \frac{\partial}{\partial w_{m}}$ on $V$. 
Therefore $\mathcal{G}|_{V}$ is a subbundle of $T_V$, 
$f^{*} \mathcal{G}|_{U_{\lambda}}$ is locally free, and
\begin{equation*}
\xymatrix@C=25pt@R=20pt{
0\ar@{->}[r]& T_{X/Y}|_{U_{\lambda}}  \ar@{->}[r]& \mathcal{F} |_{U_{\lambda}}\ar@{->}[r]
 &f^{*}\mathcal{G}|_{U_{\lambda}}\ar@{->}[r]&0, \\}
\end{equation*}
for any $1 \le \lambda \le N$.
From $\omega_{\mathcal{G}} |_{V} = dw_{1} \wedge \cdots \wedge dw_{n-r}$ and $\omega_{\mathcal{F}} |_{U_{\lambda}} = dz_{1}^{\lambda} \wedge \cdots \wedge dz_{n-r}^{\lambda}$,
we obtain $\mathcal{F}|_{U_{\lambda} } = f^{-1}\mathcal{G}|_{U_{\lambda} } $.
\end{proof}
\begin{lem}[{\cite[Lemma 6.7]{AD14}}]
\label{rationaldecent}
Let $f: X \dashrightarrow Y$ be an essentially equidimensional \footnote{A rational map $f: X \dashrightarrow Y$ is \textit{essentially equidimensional} if there exists a Zariski open set $U \subset X$ such that ${\rm codim}(X \setminus U) \ge 2$ and $f|_{U}$ is an equidimensional morphism. We refer the reader to \cite[Definition 2.22]{CKT16}.}
 dominant rational map with connected fibers between normal projective varieties, 
 $\mathcal{H}$ be a foliation induced by $f$, and $\mathcal{F}$ be a foliation on $X$.  
Assume that a general fiber of $f$ is contained in a leaf of $\mathcal{F}$.
Then there exists a foliation $\mathcal{G}$ on $Y$ such that 
$\mathcal{F}=f^{-1}\mathcal{G}$ and 
\begin{equation*}
\xymatrix@C=25pt@R=20pt{
0\ar@{->}[r]& \mathcal{H}  \ar@{->}[r]& \mathcal{F}\ar@{->}[r]
&(f^{*}\mathcal{G})^{\vee\vee}. \\}
\end{equation*}
\end{lem}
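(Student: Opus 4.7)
The plan is to reduce to the smooth-morphism setting of Lemma \ref{decent} on a suitable Zariski open subset, and then extend the resulting foliation and exact sequence across codimension-two loci using the reflexivity of $T_Y$.

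First I would use the essentially equidimensional hypothesis to pick a Zariski open $U_0 \subset X$ with $\mathrm{codim}(X \setminus U_0) \ge 2$ on which $f|_{U_0}$ is an equidimensional dominant morphism with connected fibers. Shrinking further, I would choose dense opens $U \subset U_0 \cap X_{\mathcal{F}} \cap X_{reg}$ and $V \subset Y_{reg}$ with $\mathrm{codim}(X \setminus U) \ge 2$ such that $f|_U \colon U \to V$ is a smooth morphism between smooth varieties and $\mathcal{F}|_U$ is a regular foliation. Since every general fibre of $f$ lies in a leaf of $\mathcal{F}$, the same holds for $f|_U$, so Lemma \ref{decent} yields a regular foliation $\mathcal{G}_U \subset T_V$ with $\mathcal{F}|_U = (f|_U)^{-1}\mathcal{G}_U$ and an exact sequence
\begin{equation*}
0 \longrightarrow T_{U/V} \longrightarrow \mathcal{F}|_U \longrightarrow (f|_U)^{*}\mathcal{G}_U \longrightarrow 0.
\end{equation*}
Because $\mathcal{H}$ is the foliation induced by $f$ and $f|_U$ is smooth, we have $T_{U/V} = \mathcal{H}|_U$.

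Next I would extend $\mathcal{G}_U$ to a foliation on $Y$. As $Y$ is normal, $T_Y$ is reflexive; pushing $\mathcal{G}_U$ forward along the open immersion $V \hookrightarrow Y$ and taking the saturation inside $T_Y$ produces a saturated subsheaf $\mathcal{G} \subset T_Y$ extending $\mathcal{G}_U$. The involutivity and local decomposability of the associated twisted form $\omega_{\mathcal{G}_U}$ are closed conditions that propagate to the reflexive extension $\omega_{\mathcal{G}}$, so $\mathcal{G}$ is indeed a foliation in the sense of conditions (A)-(C) of Section 2.2. The equality $\mathcal{F}|_U = (f|_U)^{-1}\mathcal{G}_U$ of saturated subsheaves of $T_X|_U$, combined with $\mathrm{codim}(X \setminus U) \ge 2$, then forces $\mathcal{F} = f^{-1}\mathcal{G}$ globally by the uniqueness of the pull-back foliation.

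Finally I would promote the sequence from $U$ to $X$. The inclusion $\mathcal{H}|_U \hookrightarrow \mathcal{F}|_U$ extends tautologically to $\mathcal{H} \hookrightarrow \mathcal{F}$, while the quotient map $\mathcal{F}|_U \twoheadrightarrow (f|_U)^{*}\mathcal{G}_U$ extends uniquely, by the Hartogs-type principle for morphisms from a torsion-free sheaf into a reflexive sheaf, to $\mathcal{F} \to (f^{*}\mathcal{G})^{\vee\vee}$; since this extension vanishes on $\mathcal{H}|_U$ and its kernel is a saturated subsheaf of $\mathcal{F}$, it vanishes on $\mathcal{H}$ globally and has kernel exactly $\mathcal{H}$. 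The main obstacle I expect is the extension of $\mathcal{G}_U$ to a bona fide foliation on $Y$: one must check that saturation inside the reflexive $T_Y$ does not destroy integrability and that the resulting $\mathcal{G}$ pulls back to $\mathcal{F}$ on the nose rather than only giving an inclusion. Both points are handled by working with the defining $(\dim Y - \mathrm{rk}\,\mathcal{G})$-form and invoking the fact that conditions (A)-(C) persist under reflexive hull on a normal variety.
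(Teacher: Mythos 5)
Your overall strategy — shrink to where Lemma \ref{decent} applies, produce $\mathcal{G}_U$ on an open of $Y$, saturate it inside $T_Y$, and then extend the left-exact sequence by Hartogs — is the same as the paper's. However, there is a genuine gap in your second step. You claim to choose a Zariski open $U\subset U_0\cap X_{\mathcal F}\cap X_{reg}$ with $\mathrm{codim}(X\setminus U)\ge 2$ on which $f|_U$ is a \emph{smooth} morphism. These two requirements are in general incompatible: while $U_0$, $X_{\mathcal F}$, and $X_{reg}$ each have codimension-two complement, the non-smooth locus of an equidimensional dominant morphism can be a divisor (for instance, the locus of non-reduced fibres). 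Once $U$ is forced to have a codimension-one complement, the Hartogs-type extension of $\mathcal{F}\to (f^*\mathcal{G})^{\vee\vee}$ in your last step no longer applies, since that argument requires the complement to have codimension at least two.

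The paper avoids this by distinguishing two open sets: $X_1$ (codimension-two complement, where $f$ is a morphism and $\mathcal{H}\subset\mathcal{F}$ is a subbundle) and the possibly smaller $X_0\subset X_1$ (the smooth locus of $f$, possibly of codimension-one complement), and it applies Lemma \ref{decent} only on $X_0$. The passage from the sequence on $X_0$ to the sequence on $X_1$ is not a pure Hartogs step; it uses that $\mathcal{H}|_{X_1}\hookrightarrow\mathcal{F}|_{X_1}$ is already a subbundle on all of $X_1$ and that the induced map from $\mathcal{F}|_{X_1}/\mathcal{H}|_{X_1}$ into $(f^*\mathcal{G})^{\vee\vee}|_{X_1}$ comes from the descent argument of \cite[Lemma~6.7]{AD14} and \cite[Lemma~B.2]{Wang}, and only then invokes $\mathrm{codim}(X\setminus X_1)\ge 2$ to extend to $X$. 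You would need to insert this intermediate step (or an equivalent argument handling the codimension-one failure of smoothness of $f$) before the Hartogs extension; as written, your proof would break down for maps $f$ whose non-smooth locus is a divisor.
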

\begin{proof}
We take Zariski open sets $X_1 \subset X_{\mathcal{F}}$ and $Y_1 \subset Y_{reg}$ such that $\mathcal{H}|_{X_1}$ is a subbundle of $ \mathcal{F}|_{X_1}$ and $f|_{X_1}: X_1 \rightarrow Y_1$ is a morphism.
We may assume that ${\rm codim}(X \setminus X_1) \ge 2$.
Let $X_0 \subset X_1$ be the set of smooth points of $f$ and  $Y_0 :=f(X_0)$.
By the argument of Lemma \ref{decent}, there exists a foliation $\mathcal{G}_{0}$ on $Y_0$ such that $\mathcal{F}|_{X_0} = (f|_{X_0})^{-1}\mathcal{G}_{0}$ and 
\begin{equation*}
\xymatrix@C=25pt@R=20pt{
0\ar@{->}[r]& \mathcal{H}|_{X_0}  \ar@{->}[r]&  \mathcal{F} |_{X_0}\ar@{->}[r]
 &(f|_{X_0})^{*}\mathcal{G}_0|_{X_0} \ar@{->}[r]&0 . \\}
\end{equation*}

Let $\mathcal{G}$ denote a saturation of $\mathcal{G}_{0}$ in $T_Y$. 
By \cite[Lemma B.2]{Wang} and the argument of \cite[Lemma 6.7]{AD14},
$\mathcal{G}$ is a foliation on $Y$, 
$\mathcal{F} = {f}^{-1}\mathcal{G}$, and
\begin{equation*}
\xymatrix@C=25pt@R=20pt{
0\ar@{->}[r]& \mathcal{H}|_{X_1}  \ar@{->}[r]&  \mathcal{F} |_{X_1}\ar@{->}[r]
 &(f^{*}\mathcal{G})^{\vee\vee}|_{X_1}  \\}
\end{equation*}
on $X_1$, which completes the proof from ${ \rm codim}(X \setminus X_1) \ge 2$.
\end{proof}

\begin{defn}[{\cite[Definition 2.3]{Druel}, \cite[Section 2.3]{LPT}}]
\label{alge_trans}
Let $X$ be a smooth projective variety and $\mathcal{F}$ be a foliation on $X$.
Then there exist a dominant rational map $f : X \dashrightarrow Y$  with connected fibers to a normal variety $Y$ and 
a foliation $\mathcal{G}$ on $Y$, such that  
$\mathcal{F} = f^{-1}\mathcal{G}$ and 
$\mathcal{G}$ is purely transcendental, i.e., there is no positive dimensional algebraic subvariety through a general point of $Y$ which is tangent to $\mathcal{G}$.

$\mathcal{G}$ is called the \textit{transcendental part} of $\mathcal{F}$ and 
the algebraically integrable foliation induced by $f$ is called the \textit{algebraic part} of $\mathcal{F}$.
\end{defn}

We give the construction of algebraic and transcendental parts for reader's convenience.
This construction is an application of \cite[Section 2.3]{LPT}.

If $\mathcal{F}$ is purely transcendental, we can take $\mathcal{G} =\mathcal{F}$ and $f = id$. 
We may assume that $\mathcal{F}$ is not purely transcendental.
According to \cite[Section 2.3]{LPT}, 
if there exists an $(n-k)$-dimensional algebraic subvariety tangent to $\mathcal{F}$ through a general point of $X$ for some nonnegative integer $k < n$, then the leaves of $\mathcal{F}$ are covered by $q$-dimensional algebraic subvarieties for some  integer $q \ge n-k$,
that is, 
there exist normal projective varieties $Y$ and $Z$, 
a dominant morphism $\pi_{1}: Z \rightarrow X$, 
and an equidimensional dominant morphism $\pi_{2} : Z \rightarrow Y$ with connected fibers, 
such that a fiber of $\pi_2$ has $q$ dimension and  $\pi_{1}(\pi_{2}^{-1}(y))$ is tangent to $\mathcal{F}$ for a general point $y \in Y$.

\begin{equation*}
\xymatrix@C=25pt@R=20pt{
Z \ar@{->}[d]_{\pi_1}  \ar@{->}[r]^{\pi_{2}} &Y \\
X&  \\   
}
\end{equation*}

Set
$$\widetilde{q}:= \max \Bigl \{0 \le q \le n: \txt{the leaves of $\mathcal{F}$ are covered by \\ $q$-dimensional algebraic subvarieties} \Bigr\}.$$
Then $\widetilde{q} > 0$ and we obtain the covering family $Y,Z,\pi_1,\pi_2$ as above such that a fiber of $\pi_2$ has $\widetilde{q}$ dimension.
By the maximality of $\widetilde{q}$ and the argument of \cite[Lemma 2.4]{LPT}, $\pi_{1}$ is birational. 
Hence $f : = \pi_{2} \circ \pi_{1}^{-1} : X \dashrightarrow Y$ is an essentially equidimensional dominant rational map with connected fibers.
Let $\mathcal{H}$ be an algebraically integrable foliation induced by $f$.
By Lemma \ref{rationaldecent}, there exists a foliation $\mathcal{G}$ on $Y$ with
$\mathcal{F} = f^{-1}\mathcal{G}$.

We show that $\mathcal{G}$ is purely transcendental.
To obtain a contradiction, suppose that there exists 
a $(\dim Y-k)$-dimensional algebraic subvariety tangent to $\mathcal{G}$ through a general point of $Y$ for some nonnegative integer $k < \dim Y$.
Hence the leaves of $\mathcal{G}$ are covered by $l$-dimensional algebraic subvarieties for some $l \in \mathbb{N}_{>0}$.
Therefore the leaves of $\mathcal{F}$ are covered by $(l + \widetilde{q})$-dimensional algebraic subvarieties, which contradicts the maximality of $\widetilde{q}$. Hence $\mathcal{G}$ is purely transcendental.

We have the  following lemma on an algebraic part of a foliation.
\begin{lem}\cite[Lemma 6.2]{Druel}
\label{almost_holomorphic}
Let $X$ be a smooth projective variety and $\mathcal{F}$ be a foliation on $X$.
If $\mathcal{F}$ is regular or $\mathcal{F}$ has a compact leaf,  
then the algebraic part of $\mathcal{F}$ has a compact leaf.
\end{lem}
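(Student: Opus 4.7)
Let $f \colon X \dashrightarrow Y$ be the dominant rational map from Definition \ref{alge_trans} defining the algebraic part $\mathcal{H}$ of $\mathcal{F}$, so that $\mathcal{F} = f^{-1}\mathcal{G}$ with $\mathcal{G}$ purely transcendental on $Y$, and a general leaf of $\mathcal{H}$ is (an open subset of) a general fiber of $f$. Exhibiting a compact leaf of $\mathcal{H}$ essentially amounts to exhibiting a fiber of $f$ that is smooth and lies in the regular locus $X_{\mathcal{H}}$; since $X$ is projective, fibers of $f$ are automatically compact algebraic subvarieties, so only smoothness and regularity of $\mathcal{H}$ along the fiber are at stake.

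\emph{Case 1: $\mathcal{F}$ has a compact leaf $L$.} By Chow's theorem $L$ is algebraic in $X$, and since $L$ is tangent to $\mathcal{F} = f^{-1}\mathcal{G}$, the image $f(L) \subset Y$ is algebraic and tangent to $\mathcal{G}$. The key step, and the main obstacle, is to show $\dim f(L) = 0$. I would argue by contradiction: if $\dim f(L) > 0$, then deforming $L$ inside the irreducible component of the Chow variety of $X$ through $[L]$ — the condition of being tangent to $\mathcal{F}$ is closed — produces an algebraic family $\{L_t\}_{t \in T}$ of compact subvarieties of $X$ tangent to $\mathcal{F}$. Combined with the fibers of $f$, this yields a covering family of algebraic subvarieties tangent to $\mathcal{F}$ of generic dimension strictly greater than the invariant $\widetilde q$ used in the construction of the algebraic part, contradicting its maximality; equivalently, the images $f(L_t)$ form a covering algebraic family of positive-dimensional subvarieties of $Y$ tangent to the purely transcendental foliation $\mathcal{G}$, which is impossible. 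Hence $f(L)$ is a point and $L$ lies in a single fiber $F$ of $f$; the connected component of $F \cap X_{\mathcal{H}}$ through a general smooth point of $L$ is then a compact leaf of $\mathcal{H}$ (carried by the compact algebraic $F$).

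\emph{Case 2: $\mathcal{F}$ is regular.} Let $\pi \colon \widetilde X \to X$ and $\widetilde f \colon \widetilde X \to Y$ be the resolution of $f$ furnished by Proposition \ref{raynord_foliation}, so that $\widetilde f$ is a surjective morphism with connected fibers between smooth projective varieties. By generic smoothness in characteristic zero, a general fiber $\widetilde F$ of $\widetilde f$ is smooth, and it is compact since $\widetilde X$ is projective. Regularity of $\mathcal{F}$ ensures that on the open set where $f$ is a morphism the kernel $\mathcal{H} = \ker(df \colon \mathcal{F} \to f^*\mathcal{G})$ is locally free (in the spirit of Lemma \ref{decent}), so $\mathcal{H}$ is regular along a general fiber. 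Choosing a general $\widetilde F$ avoiding the exceptional locus of $\pi$, its image $\pi(\widetilde F) \subset X$ is a smooth compact subvariety contained in $X_{\mathcal{H}}$, which provides the desired compact leaf of $\mathcal{H}$.
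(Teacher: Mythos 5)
Your proposal has genuine gaps, and Case~1 in fact argues for a false intermediate claim. (Note also that the paper gives no internal proof of this lemma; it simply cites \cite[Lemma 6.2]{Druel}.)

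In Case~1 you aim to show $\dim f(L)=0$, i.e.\ that a compact $\mathcal{F}$-leaf $L$ lies in a single fiber of $f$. This is false in general. Take $C$ an elliptic curve, $\rho\colon\pi_1(C)\to \mathrm{PGL}(2,\mathbb{C})$ a representation by irrational rotations fixing $0,\infty\in\mathbb{P}^1$, $Y$ the associated $\mathbb{P}^1$-bundle over $C$, and $\mathcal{G}$ the regular rank-one suspension foliation on $Y$. Then $\mathcal{G}$ is purely transcendental (a general leaf is non-algebraic) but has a compact leaf $M$, namely the invariant section over $\{z=0\}$. Set $X=Y\times\mathbb{P}^1$, $p\colon X\to Y$ the projection, $\mathcal{F}=p^{-1}\mathcal{G}$, a regular rank-two foliation. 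The algebraic part is $\mathcal{H}=T_{X/Y}$ with $f=p$, and $L=M\times\mathbb{P}^1$ is a compact $\mathcal{F}$-leaf with $\dim f(L)=\dim M=1>0$. The lemma's conclusion still holds (each $\{y\}\times\mathbb{P}^1$ is a compact $\mathcal{H}$-leaf), but not for the reason you give. In general one has $\dim f(L)={\rm rk}\,\mathcal{F}-{\rm rk}\,\mathcal{H}$, so $\dim f(L)=0$ forces $\mathcal{H}=\mathcal{F}$. Independently, the Chow-variety deformation is unfounded: the component through $[L]$ may be zero-dimensional, and even when it is not, tangency to $\mathcal{F}$ gives no reason for the family to dominate $X$.

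In Case~2 the step ``choosing a general $\widetilde F$ avoiding the exceptional locus of $\pi$'' is precisely the almost-holomorphy of $f$, which is the heart of the matter. If $\widetilde f(\mathrm{Exc}(\pi))=Y$ — as happens, e.g., for the pencil of lines through a point of $\mathbb{P}^2$, where the rank-one foliation is singular at the base point and has no compact leaf — then no fiber avoids $\mathrm{Exc}(\pi)$. Regularity of $\mathcal{F}$ must be used exactly here to rule out such behavior, and your argument never invokes it at that point. The subsidiary claim that $\ker(df\colon\mathcal{F}\to f^{*}\mathcal{G})$ is automatically locally free on the domain of $f$ is also unjustified: $\mathcal{G}$ need not be locally free and the rank of $df$ need not be locally constant a priori. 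In short, the key content of Druel's lemma is to establish almost-holomorphy of the algebraic reduction map from regularity or the existence of a compact leaf, and your proposal assumes rather than proves it.
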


\subsection{Slopes of torsion-free coherent sheaves}
\text{}

In this section, let $X$ be a smooth projective variety.
A {\it 1-cycle} is a formal linear combination of irreducible reduced proper curves.
$N_{1}(X)_{\mathbb{R}}$ is an $\mathbb{R}$-vector space of 1-cycles with real coefficients modulo numerical equivalence.
A class $\alpha \in N_{1}(X)_{\mathbb{R}}$ is {\it movable} if $D\alpha \ge 0$ for any effective Cartier divisor $D$ on $X$. The set of movable classes forms a closed convex cone $Mov(X) \subset N_{1}(X)_{\mathbb{R}}$, called the \textit{movable cone}.
 
Let $\mathcal{F}$ be a nonzero torsion-free coherent sheaf of $X$.
For any $\alpha  \in Mov(X) $, the {\it slope of $\mathcal{F}$ with respect to $\alpha$} is defined by 
$$
\mu_{\alpha }(\mathcal{F}) := \frac{ c_1(\mathcal{F})\alpha}{{\rm rk} \mathcal{F}}.
$$
$\mu_{\alpha }^{max}(\mathcal{F})$ is defined by the supremum of $\mu_{\alpha }(\mathcal{E})$ for any nonzero coherent subsheaf $\mathcal{E} \subset \mathcal{F}$ and 
$\mu_{\alpha }^{min}(\mathcal{F})$ is defined by the infimum of $\mu_{\alpha }(\mathcal{Q})$ for any nonzero torsion-free quotient sheaf $\mathcal{F} \rightarrow \mathcal{Q}$.
$\mathcal{F} $ is called $\alpha$-{\it semistable} if $\mu_{\alpha }^{max}(\mathcal{F})=\mu_{\alpha }(\mathcal{F})$.
By \cite[Proposition 2.4]{CP15}, 
there exists a unique subsheaf $\mathcal{F}_{max} \subset \mathcal{F}$, maximal with respect to the inclusion, such that 
$\mu_{\alpha }(\mathcal{F}_{max}) = \mu_{\alpha }^{max}(\mathcal{F}) $.
$\mathcal{F}_{max}$ is called the {\it maximal destabilizing subsheaf with respect to $\alpha$} of $\mathcal{F}$.
$\mathcal{F}_{max}$ is saturated and $\alpha$-semistable.

The following theorem states a relationship between foliations and slopes.
\begin{thm}\cite[Theorem 1.1]{CP15}
\label{CP15}
Let $\mathcal{F}$ be a foliation on $X$.
If there exists $\alpha \in Mov(X)$ such that $\mu_{\alpha }^{min}(\mathcal{F}) >0$, then $\mathcal{F}$ is an algebraically integrable foliation with rationally connected leaves, i.e., a Zariski closure of any leaf through a general point  is rationally connected.
\end{thm}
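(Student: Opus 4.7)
The plan is to produce rational curves tangent to $\mathcal{F}$ through a general point and then bootstrap to algebraic integrability with rationally connected leaves, following the strategy of Miyaoka, Bogomolov--McQuillan and Kebekus--Sol\'a Conde--Toma, adapted to the movable setting.

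First I would unpack the slope hypothesis. By definition, $\mu_\alpha^{\min}(\mathcal{F}) > 0$ forces every torsion-free quotient of $\mathcal{F}$ to have positive $\alpha$-degree. Moreover, by the Harder--Narasimhan formalism with respect to movable classes (\cite[Proposition 2.4]{CP15}), the minimal-slope quotient is itself semistable of positive slope. To translate this abstract positivity into geometric positivity on a curve, I would invoke a Mehta--Ramanathan style restriction theorem adapted to movable classes: for a sufficiently general complete intersection curve $C = D_1 \cap \cdots \cap D_{n-1}$ with the $D_i$ very ample multiples chosen to approximate $\alpha$, the Harder--Narasimhan filtration of $\mathcal{F}$ commutes with restriction to $C$, and the positive minimal slope is preserved, so $\mathcal{F}|_C$ is an ample vector bundle on $C$. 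The BDPP duality between the movable cone and the pseudo-effective cone of divisors is what underpins that such curves can be arranged to detect the positivity encoded by $\alpha$.

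With $\mathcal{F}|_C$ ample on a general complete intersection curve, the next step is to apply the Bogomolov--McQuillan criterion in the form proved by Kebekus--Sol\'a Conde--Toma: a foliation $\mathcal{F} \subset T_X$ whose restriction to such a curve is ample is automatically algebraically integrable, and the Zariski closure of a general leaf is rationally connected. The proof of that input proceeds by a characteristic $p$ bend-and-break: one spreads $C$ out, uses ampleness of $\mathcal{F}|_C$ modulo $p$ to deform $C$ tangentially to $\mathcal{F}$ through a fixed point, and extracts a rational curve tangent to $\mathcal{F}$ through that point; a covering family of such rational tangent curves is obtained, and the foliation integrating them (the rational chain connectedness relation tangent to $\mathcal{F}$) must agree with $\mathcal{F}$ itself by the minimal-slope positivity, which prevents any proper sub-foliation of matching rank. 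Standard MRC-type arguments then upgrade rational chain connectedness of leaves to rational connectedness.

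The main obstacle is the transition from the purely numerical hypothesis on the movable class $\alpha$ to the geometric ampleness of $\mathcal{F}|_C$ on a concrete curve. The classical Mehta--Ramanathan restriction theorem is stated for polarizations, so one needs the refined version for arbitrary movable classes developed by Campana--P\u{a}un (using the BDPP characterization of the movable cone and Greb--Kebekus--Peternell slope theory); this is the technical heart that makes the subsequent Bogomolov--McQuillan machinery applicable. Once that bridge is in place, the remaining steps reduce to quoting and assembling known foliation-theoretic results.
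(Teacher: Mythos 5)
The paper does not prove this statement; it is quoted verbatim as \cite[Theorem 1.1]{CP15} (Campana--P\u{a}un) and used as a black box, so there is no in-paper argument to compare against. What you have written is therefore an attempt to reconstruct the Campana--P\u{a}un proof, and your overall strategy is the right one: positive minimal slope is upgraded to ampleness of $\mathcal{F}|_C$ on a suitable curve $C$, and then a Bogomolov--McQuillan/Kebekus--Sol\'a Conde--Toma criterion (characteristic $p$ bend-and-break applied to tangential deformations of $C$) yields algebraic integrability with rationally connected leaf closures.

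There is, however, a genuine gap in the step you yourself flag as ``the technical heart.'' You propose to ``approximate $\alpha$'' by a complete intersection class $H_1\cdots H_{n-1}$ and apply a Mehta--Ramanathan restriction to a general curve $C = D_1\cap\cdots\cap D_{n-1}$. But complete intersection classes on $X$ do not in general generate, nor even lie densely in, the movable cone $Mov(X)$: by BDPP the movable cone is the closure of the cone of \emph{strongly movable} curves $\mu_*(A_1\cdots A_{n-1})$, where $\mu:X'\to X$ runs over birational modifications of $X$ and the $A_i$ are ample on $X'$. So a movable $\alpha$ with $\mu_\alpha^{\min}(\mathcal{F})>0$ may have no nearby complete intersection representative on $X$, and the Mehta--Ramanathan theorem (whose general-position curve is anchored to a polarization on $X$, not to a modification) does not directly apply. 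There is also no guarantee that $\mu_\alpha^{\min}(\mathcal{F})>0$ for some movable $\alpha$ forces $\mu_{\alpha'}^{\min}(\mathcal{F})>0$ for \emph{some} complete intersection $\alpha'$: the function $\alpha\mapsto\mu_\alpha^{\min}(\mathcal{F})$ is merely concave, and its positivity locus may miss the complete intersection subcone entirely. Bridging this is precisely where the actual Campana--P\u{a}un argument does substantial work (developing a slope/Harder--Narasimhan formalism intrinsic to movable classes and combining it with delicate reductions), and appealing to ``a refined Mehta--Ramanathan for movable classes'' as if it were available off the shelf leaves the proof incomplete. If you want to fill the gap you would need either to explain how to descend positivity from a modification $X'$ along the contracted locus of $\mu$ (nontrivial, since pulling a foliation back along a blow-up distorts slopes), or to establish directly that $\mu_\alpha^{\min}(\mathcal{F})>0$ for movable $\alpha$ already implies the needed positivity against a complete intersection class.
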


The following corollary was already proved in \cite[Corollarie 1.25]{Clauden}.
We give a proof for the reader's convenience.
\begin{cor}\cite[Corollarie 1.25]{Clauden}
\label{psef_transcendental}
Let $\mathcal{F}$ be a foliation on $X$.
If $\mathcal{F}$ is purely transcendental, then $K_{\mathcal{F}}$ is pseudo-effective.
\end{cor}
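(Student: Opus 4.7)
The plan is to argue by contradiction. Suppose that $K_{\mathcal{F}}$ is not pseudo-effective. By the Boucksom--Demailly--Paun--Peternell duality between the pseudo-effective cone of divisors and the closed movable cone of $1$-cycles, there exists $\alpha\in Mov(X)$ with $K_{\mathcal{F}}\cdot \alpha<0$. Since $\mathcal{O}_X(K_{\mathcal{F}})\cong \det(\mathcal{F})^{\vee}$, this is the same as $\mu_{\alpha}(\mathcal{F})>0$. My goal is then to produce a sub-foliation $\mathcal{F}_{1}\subset \mathcal{F}$ with $\mu_{\alpha}^{\min}(\mathcal{F}_{1})>0$. Once this is achieved, Theorem \ref{CP15} forces $\mathcal{F}_{1}$ to be algebraically integrable with rationally connected leaves; a leaf of $\mathcal{F}_{1}$ through a general point of $X$ is then a positive-dimensional algebraic subvariety tangent to $\mathcal{F}$, contradicting the assumption that $\mathcal{F}$ is purely transcendental.

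To construct the candidate $\mathcal{F}_{1}$, I would take the Harder--Narasimhan filtration
$$0=\mathcal{E}_{0}\subset \mathcal{E}_{1}\subset \cdots \subset \mathcal{E}_{s}=\mathcal{F}$$
of $\mathcal{F}$ with respect to $\alpha$, whose successive quotients are $\alpha$-semistable of strictly decreasing slopes $\mu_{1}>\mu_{2}>\cdots >\mu_{s}$. Since $\mu_{\alpha}(\mathcal{F})$ is a convex combination of these slopes, $\mu_{1}\ge \mu_{\alpha}(\mathcal{F})>0$. Set $\mathcal{F}_{1}:=\mathcal{E}_{1}$. Then $\mathcal{F}_{1}$ is $\alpha$-semistable, saturated in $\mathcal{F}$, hence saturated in $T_{X}$ (because $\mathcal{F}$ is a foliation and therefore saturated in $T_{X}$), and $\mu_{\alpha}^{\min}(\mathcal{F}_{1})=\mu_{1}>0$.

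The main obstacle is to verify that $\mathcal{F}_{1}$ is closed under the Lie bracket, and here I would use the standard Bogomolov--Campana--Paun trick. Since $\mathcal{F}$ is involutive, the bracket restricts to a $\mathbb{C}$-bilinear map $\mathcal{F}_{1}\times \mathcal{F}_{1}\to \mathcal{F}$; a direct application of the Leibniz identity $[fX,Y]=f[X,Y]-Y(f)X$ shows that modulo $\mathcal{F}_{1}$ the terms involving derivatives of $f$ disappear, so one obtains an $\mathcal{O}_{X}$-linear symbol
$$\phi:\, \wedge^{2}\mathcal{F}_{1}\longrightarrow \mathcal{F}/\mathcal{F}_{1}.$$
Because $\mathcal{F}_{1}$ is $\alpha$-semistable, $\wedge^{2}\mathcal{F}_{1}$ is also $\alpha$-semistable (a standard characteristic-zero fact that holds for movable polarizations) and $\mu_{\alpha}(\wedge^{2}\mathcal{F}_{1})=2\mu_{1}$. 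On the other hand, by construction of the filtration, $\mu_{\alpha}^{\max}(\mathcal{F}/\mathcal{F}_{1})=\mu_{2}<\mu_{1}<2\mu_{1}$. Any nonzero morphism from a semistable sheaf $A$ to a sheaf $B$ satisfies $\mu_{\alpha}(A)\le \mu_{\alpha}^{\max}(B)$, so $\phi=0$ and $[\mathcal{F}_{1},\mathcal{F}_{1}]\subset \mathcal{F}_{1}$. Thus $\mathcal{F}_{1}$ is a sub-foliation of $\mathcal{F}$ with $\mu_{\alpha}^{\min}(\mathcal{F}_{1})>0$, and Theorem \ref{CP15} produces the contradiction outlined in the first paragraph. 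The only delicate input is the involutivity of the top Harder--Narasimhan piece; the rest is a bookkeeping of slopes.
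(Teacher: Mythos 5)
Your proof is essentially the paper's proof: you both pass to a movable class $\alpha$ with $\mu_\alpha^{\max}(\mathcal F)>0$, extract the top Harder--Narasimhan piece (the maximal destabilizing subsheaf), observe that it has positive minimal slope, invoke Campana--P\u aun (Theorem~\ref{CP15}) to deduce it is algebraically integrable with rationally connected leaves, and conclude by contradiction with pure transcendence. The only difference is that you spend a paragraph verifying involutivity of $\mathcal F_1=\mathcal E_1$ via the slope bound on the $\mathcal O_X$-linear symbol $\wedge^2\mathcal F_1\to\mathcal F/\mathcal F_1$; the paper applies Theorem~\ref{CP15} directly to the maximal destabilizing sheaf without comment, which, given the way the paper phrases CP15 (``let $\mathcal F$ be a foliation''), leaves a small gap that your extra paragraph closes --- so your version is, if anything, a bit more careful, but the route is the same.
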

\begin{proof}
Assume that $K_{\mathcal{F}}$ is not pseudo-effective.
Then there exists  $\alpha \in Mov(X)$ such that 
$\mu_{\alpha }^{max}(\mathcal{F}) >0$.
We take a maximal destabilizing sheaf $\mathcal{E}$ with respect to $\alpha$ of $\mathcal{F}$. From 
$\mu_{\alpha}(\mathcal{E}) =\mu_{\alpha }^{min}(\mathcal{E}) =\mu_{\alpha }^{max}(\mathcal{F}) >0 $, 
by Theorem \ref{CP15}, $\mathcal{E}$ is an algebraically integrable foliation with rationally connected leaves, which is impossible.
\end{proof}

\subsection{Algebraic positivities of torsion-free coherent sheaves}
\text{}

In this section, we recall notions of singular hermitian metrics and algebraic positivities of torsion-free coherent sheaves. Throughout this paper, we adopt the definition of singular hermitian metrics in \cite{HPS18}.

\begin{defn}
Let $X$ be a smooth $n$-dimensional projective variety.
 \begin{enumerate}
  \item \cite[Definition 1.9]{DPS} A vector bundle $E$ is {\it ample} (resp. {\it strctly nef, nef}) if $\mathcal{O}_{\mathbb{P}(E) }(1)$ is ample (resp. strctly nef, nef) on $\mathbb{P}(E) $.
 \item  \cite[Definition 1.17]{DPS} A vector bundle $E$ is {\it numerically flat} if $E$ is nef and $c_{1}(E) = 0$.
 \item \cite[Definition 3.20]{Nak} A torsion-free coherent sheaf $\mathcal{E}$ is {\it weakly positive at $x \in X$} if
 for any $a \in \mathbb{N}_{>0}$ and for any ample line bundle $A$ on $X$, there exists 
$b \in \mathbb{N}_{>0}$ such that $\Sym^{ab}( \mathcal{E}  ) ^{\vee\vee} \otimes A^{ b}$ is globally generated at $x$. 
 \item \cite[Definition 3.20]{Nak} A torsion-free coherent sheaf $\mathcal{E}$ is {\it pseudo-effective} ({\it weakly positive in the sense of Nakayama}) if  $ \mathcal{E}  $ is weakly positive at some $x \in X$.
 \item \cite[Definition 3.20]{Nak} A torsion-free coherent sheaf $\mathcal{E}$ is {\it V-big} ({\it big in the sense of Viehweg})  if there exist $a \in \mathbb{N}_{>0}$ and 
 an ample line bundle $A$ on $X$ such that $\Sym^{a}( \mathcal{E}  ) ^{\vee\vee} \otimes A^{-1}$ is pseudo-effective.
   \item \cite[Definition 6.4]{DPS01} A torsion-free coherent sheaf $\mathcal{E}$ is {\it almost nef} if there exists a countable family  of proper subvarieties $Z_{i}$ of $X$ 
  such that $\mathcal{E} |_{C} $ is a nef vector bundle on $C$ for any curve $ C \not \subset \cup_{i} Z_i$.
 \item \cite[Definition 19.1]{HPS18} A torsion-free coherent sheaf $\mathcal{E}$ \textit{has a positively curved singular hermitian metric} if $\mathcal{E}$ has a singular hermitian metric $h$ such that 
$\log |u|_{h^{\vee}}$ is a psh function 
for any local section $u$ of $\mathcal{E}^{\vee}$, 
where $h^{\vee}$ is the dual metric defined by $h^{\vee}  := {}^t\! h^{-1}$. 
\item \cite[Definition 2.1]{Pet1} For any ample divisors $H_1, \ldots, H_{n-1}$ on $X$, a torsion-free coherent sheaf $\mathcal{E}$ is \textit{$(H_1, \ldots, H_{n-1})$-generically ample}
if $\mathcal{E}|_{C}$ is ample on $C$ for a general curve $C = D_{1} \cap \dots \cap D_{n-1}$ with general $D_{i} \in | m_i H_i|$ and $m_i \gg 0 $.
 \end{enumerate}
 \end{defn}
 
 \begin{rem}\label{rem-psef}
The notion of pseudo-effectivity is often used in a different meaning. 
For example, in other papers, we may say a vector bundle $E$ on a smooth projective variety $X$ is pseudo-effective 
when $\mathcal{O}_{\mathbb{P}(E) }(1)$ is  a pseudo-effective line bundle. 
However, the pseudo-effectivity of $E$ in this paper is stronger than this definition. 
We require that  the image of the non-nef locus of 
$\mathcal{O}_{\mathbb{P}(E) }(1)$ is properly contained in $X$  
in addition to this condition. 
 \end{rem}
 
Relationships among them can be summarized  by the following table: 

\begin{equation*}
\xymatrix@C=40pt@R=30pt{
  \txt{ample} \ar@{=>}[r]\ar@{=>}[d]& \txt{nef} \ar@{=>}[d] &
   \txt{\textit{E} has a positively curved \\ singular hermitian metric} \ar@{=>}[ld] \\ 
 \txt{V-big} \ar@{=>}[r] &  \txt{pseudo-effective} \ar@{=>}[r]^{\ \ \ \  (1) }& \txt{almost nef}
}
\end{equation*}

Notice that when $E$ is a line bundle, the converse of (1) holds by \cite[Theorem 0.2]{BDPP} and  $E$ is V-big if and only if $E$ is big. 

\begin{prop}
\label{MR}
Let $X$  be a smooth $n$-dimensional projective variety, $\mathcal{F}$ be a torsion-free coherent sheaf of $X$, and $H_1, \ldots, H_{n-1}$ be ample divisors on $X$.
Set $\alpha := H_1 \cdots H_{n-1}$.
If $\mu_{\alpha }^{min}(\mathcal{F}) >0$, then $\mathcal{F}$ is $(H_1, \ldots, H_{n-1})$-generically ample. 
\end{prop}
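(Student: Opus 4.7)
The plan is to combine the Harder--Narasimhan filtration of $\mathcal{F}$ with respect to $\alpha$ with the Mehta--Ramanathan restriction theorem, and then use the classical fact that a semistable vector bundle of positive degree on a smooth projective curve is ample.

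First, I would take the Harder--Narasimhan filtration
\begin{equation*}
0 = \mathcal{F}_0 \subsetneq \mathcal{F}_1 \subsetneq \cdots \subsetneq \mathcal{F}_k = \mathcal{F}
\end{equation*}
with respect to $\alpha = H_1 \cdots H_{n-1}$, so that each quotient $\mathcal{Q}_i := \mathcal{F}_i / \mathcal{F}_{i-1}$ is a torsion-free $\alpha$-semistable sheaf, and the slopes satisfy
\begin{equation*}
\mu_\alpha(\mathcal{Q}_1) > \mu_\alpha(\mathcal{Q}_2) > \cdots > \mu_\alpha(\mathcal{Q}_k) = \mu_\alpha^{min}(\mathcal{F}) > 0.
\end{equation*}

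Next I would invoke the Mehta--Ramanathan restriction theorem (in its form for complete intersections of arbitrary ample divisors, as established by Flenner and Langer): for all sufficiently divisible $m_1, \ldots, m_{n-1}$ and for a \emph{general} choice $D_i \in |m_i H_i|$, the curve $C := D_1 \cap \cdots \cap D_{n-1}$ is smooth, it avoids the singularity loci of each $\mathcal{F}_i$ and $\mathcal{Q}_i$, and each $\mathcal{Q}_i|_C$ is a semistable vector bundle on $C$ of degree
\begin{equation*}
\deg(\mathcal{Q}_i|_C) = (m_1 \cdots m_{n-1}) \cdot c_1(\mathcal{Q}_i) \cdot \alpha = (m_1 \cdots m_{n-1}) \cdot \mathrm{rk}(\mathcal{Q}_i) \cdot \mu_\alpha(\mathcal{Q}_i) > 0,
\end{equation*}
because $\mu_\alpha(\mathcal{Q}_i) \ge \mu_\alpha^{min}(\mathcal{F}) > 0$ for every $i$.

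Now I would conclude as follows: a semistable vector bundle on a smooth projective curve with positive slope is ample by a standard criterion (cf.\ Hartshorne / Miyaoka), since its pullback to any finite cover has all quotients of positive degree, so $\mathcal{O}_{\mathbb{P}(\mathcal{Q}_i|_C)}(1)$ is ample. Thus each $\mathcal{Q}_i|_C$ is an ample vector bundle on $C$. Since $\mathcal{F}|_C$ admits a filtration by subbundles whose successive quotients are the ample bundles $\mathcal{Q}_i|_C$, and since an extension of ample bundles on a curve is ample, $\mathcal{F}|_C$ is ample. This holds for general $C$ cut out by general $D_i \in |m_i H_i|$ with $m_i \gg 0$, which is exactly the definition of $(H_1, \ldots, H_{n-1})$-generic ampleness.

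The main obstacle is the careful deployment of Mehta--Ramanathan in the complete intersection form for several \emph{different} ample divisors $H_1, \ldots, H_{n-1}$ (rather than powers of a single polarization), including the preservation of torsion-freeness and semistability of each HN quotient after restriction; this requires choosing the $m_i$ large and the $D_i$ general enough so that $C$ misses the loci where the filtration fails to be a filtration by subbundles. Everything else is a routine slope computation and the standard ampleness criterion on curves.
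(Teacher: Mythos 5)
Your proof is correct and takes essentially the same route as the paper: the paper likewise restricts the Harder--Narasimhan filtration via Mehta--Ramanathan to a general complete intersection curve $C$, concludes $\mu^{min}(\mathcal{F}|_C) = \mu_\alpha^{min}(\mathcal{F}) > 0$, and deduces ampleness of $\mathcal{F}|_C$. Your write-up merely spells out the intermediate steps (filtration by the semistable quotients $\mathcal{Q}_i$, positivity of each $\deg(\mathcal{Q}_i|_C)$, and the extension argument on curves) that the paper compresses into one line.
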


\begin{proof}
The proof is the same as the proof of \cite[Theorem 0.3]{Ou}.
By Mehta-Ramanathan's theorem in \cite[Theorem 6.1]{MR}, 
a restriction of the Harder-Narasimhan filtration with respect to $\alpha$ of $\mathcal{F}$ to $C$ is
the Harder-Narasimhan filtration of $\mathcal{F}|_{C}$, for a general curve $C = D_{1} \cap \dots \cap D_{n-1}$ with general $D_{i} \in | m_i H_i|$ and $m_i \gg 0 $.
Hence $\mu^{min}(\mathcal{F}|_{C}) = \mu_{\alpha }^{min}(\mathcal{F}) >0$, and finally $\mathcal{F}|_{C}$ is ample on $C$.
\end{proof}

The following theorem is often used, thus we will summarize it.
\begin{thm}
\label{almost_nef}

Let $X$ be a smooth projective variety and $\mathcal{E}$ be an almost nef torsion-free coherent sheaf of $X$.
Then the following statements hold.

\begin{enumerate}
\item \cite[Proposition 3.8]{LOY20} For any generically surjective morphism $\tau : \mathcal{E} \rightarrow \mathcal{Q} $ to a non-zero torsion-free coherent sheaf, $\mathcal{Q}$ is almost nef.
In particular, $\mu_{\alpha}^{min}(\mathcal{E}) \ge 0$ for any $\alpha \in Mov(X)$.
\item \cite[Theorem 6.7]{DPS01} If $\mathcal{E}$ is a vector bundle, for any non zero section $s \in H^0(X,\mathcal{E}^{\vee})$, 
$s$ has no zero point.
\item \cite[Theorem 1.8]{HP19} \cite[Theorem 4.4]{LOY20} If $c_1(\mathcal{E}) = 0$ and $\mathcal{E}$ is reflexive, then $\mathcal{E}$ is a numerically flat vector bundle.
\end{enumerate}

\end{thm}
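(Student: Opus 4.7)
The three parts are of very different depth, so I would address them in increasing order of difficulty, treating (1) and (2) by direct curve-based arguments and (3) by invoking singular-hermitian-metric technology.

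\textbf{Part (1).} Let $\{Z_i\}_{i \in \mathbb{N}}$ be the countable family of proper subvarieties witnessing the almost nefness of $\mathcal{E}$, and let $W \subsetneq X$ be the (proper) closed locus where $\tau$ fails to be surjective. For any irreducible curve $C \subset X$ with $C \not\subset W$ and $C \not\subset Z_i$ for every $i$, the vector bundle $\mathcal{E}|_{C}$ is nef, and $\tau|_C$ is surjective outside the finite set $C \cap W$; after quotienting by torsion we obtain a surjection $\mathcal{E}|_{C} \twoheadrightarrow (\mathcal{Q}|_{C})/\mathrm{torsion}$, so the locally free part of $\mathcal{Q}|_{C}$ is nef. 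Enlarging the countable family by $W$ shows $\mathcal{Q}$ is almost nef. The slope bound then follows: given a torsion-free quotient $\mathcal{E} \twoheadrightarrow \mathcal{Q}$, a general member $C$ of a covering family representing a movable curve class avoids each $Z_i$ and $W$, so $c_1(\mathcal{Q}) \cdot [C] = \deg \mathcal{Q}|_C \ge 0$; density of covering-family classes in $\mathrm{Mov}(X)$ (via BDPP) gives $\mu_{\alpha}(\mathcal{Q}) \ge 0$ for every $\alpha \in \mathrm{Mov}(X)$.

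\textbf{Part (2).} Suppose, for contradiction, that a nonzero section $s \in H^0(X,\mathcal{E}^{\vee})$ vanishes at some $x_0 \in X$. Since $\bigcup_i Z_i$ is a countable union of proper subvarieties, I can choose a very general complete-intersection curve $C$ passing through $x_0$ with $C \not\subset Z_i$ for all $i$; then $\mathcal{E}|_C$ is a nef vector bundle, equivalently every quotient of $\mathcal{E}|_C$ has nonnegative degree, equivalently every subbundle of $\mathcal{E}^{\vee}|_C$ has nonpositive degree. However, $s|_C$ is a nonzero section of $\mathcal{E}^{\vee}|_C$ vanishing at $x_0$, and its saturation defines a line subbundle $L \hookrightarrow \mathcal{E}^{\vee}|_C$ with $\deg L \ge \mathrm{mult}_{x_0}(s|_C) \ge 1$, which is a contradiction.

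\textbf{Part (3).} This is the substantive step, and I would not try to reprove it from scratch; the plan is to invoke the mechanism of \cite{HP19} and \cite{LOY20}. The strategy is: first, using (1), one shows that $\mathcal{E}$ is $\alpha$-semistable with $\mu_{\alpha}(\mathcal{E})=0$ for $\alpha = H_1\cdots H_{n-1}$ with $H_i$ ample, since any destabilizing quotient would produce a quotient of negative slope violating (1). Combined with reflexivity and $c_1(\mathcal{E})=0$, a Mehta-Ramanathan argument restricts $\mathcal{E}$ to a general complete-intersection curve as a degree-zero nef vector bundle, hence numerically flat on every such curve. To upgrade this pointwise-on-curves statement to a statement on $X$, the key step is to construct a positively curved singular hermitian metric on $\mathcal{E}$ whose curvature current has vanishing trace; once such a metric exists on the reflexive sheaf, one deduces local freeness (the metric is smooth) and then numerical flatness in the sense of \cite{DPS}.

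\textbf{Main obstacle.} The genuinely hard part is (3), specifically the construction and regularity of the singular hermitian metric: the almost-nef hypothesis is much weaker than pseudo-effectivity, so the metric cannot be produced directly by approximation from global sections of symmetric powers, and one has to argue via a limiting procedure that uses the vanishing of $c_1$ to force the would-be polar set of the metric to be empty in codimension one, and reflexivity to extend across codimension two. Parts (1) and (2) are essentially formal, but (3) is where the true analytic input enters, and I would rely on the cited theorems of H\"oring--Peternell and Liu--Ou--Yang rather than rederive it.
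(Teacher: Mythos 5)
The paper gives no proof of this theorem: it is stated explicitly as a compilation of known results, with the citations to \cite{LOY20}, \cite{DPS01}, and \cite{HP19} appearing inside the statement itself, preceded by the remark that the facts are being ``summarized.'' There is therefore nothing in the paper to compare against at the level of argument, and the only question is whether your blind reconstruction is sound.

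Your arguments for (1) and (2) are correct and are essentially the proofs one finds in the cited sources. One small imprecision in (1): the induced map $\mathcal{E}|_{C} \to (\mathcal{Q}|_{C})/\mathrm{torsion}$ need not be surjective, since a torsion quotient of a torsion-free sheaf on a curve need not vanish; what one actually gets is a full-rank nef subsheaf of $(\mathcal{Q}|_{C})/\mathrm{torsion}$ with torsion cokernel, and then one uses the standard fact that a vector bundle on a curve containing a full-rank nef subsheaf is itself nef. The conclusion is unaffected. For the ``in particular'' part, the cleaner phrasing is that almost nefness of $\mathcal{Q}$ forces $\det\mathcal{Q}$ to be nef on all curves outside a countable union, hence pseudo-effective, hence nonnegative against every movable class; this is what your BDPP invocation is getting at. Your proof of (2) is correct: the saturation of $s|_{C}$ is a line subbundle of $\mathcal{E}^{\vee}|_{C}$ of positive degree once $s$ vanishes on $C$, contradicting nefness of $\mathcal{E}|_{C}$; the only care needed is that a very general complete-intersection curve through $x_0$ is not contained in any $Z_i$ or in the zero scheme of $s$, which holds when $\dim X \ge 2$ (and the case $\dim X = 1$ is trivial). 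For (3) you correctly identify that the substantive content lies there, and deferring to \cite{HP19} and \cite{LOY20} is exactly what the paper does; your outline of the strategy (semistability of degree zero, restriction to Mehta--Ramanathan curves, and the singular-metric/reflexivity input to pass from curves to $X$) is a fair description of the mechanism in those references.
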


\section{Fujita's decomposition}

\begin{lem}
\label{Fujitafiltration}
Let $X$ be a smooth $n$-dimensional projective variety,  $\mathcal{F}$ be a reflexive coherent sheaf of $X$, and $H_1, \ldots, H_{n-1}$ be ample divisors on $X$.
Set $\alpha := H_1 \cdots H_{n-1} \in Mov(X)$. 
If $\mathcal{F}$ is almost nef, then there exist, up to isomorphism, unique torsion-free coherent sheaves $\mathcal{Q},\mathcal{K}$ which satisfy the following conditions.
\begin{enumerate}
\item $\mathcal{Q}$ is a numerically flat vector bundle. 
\item $\mu_{\alpha}^{max}(\mathcal{K})<0$ unless $\mathcal{K} = 0$.
\item We have the following exact sequence 
\begin{equation*}
\xymatrix@C=25pt@R=20pt{
0\ar@{->}[r]& \mathcal{Q}  \ar@{->}[r]&  \mathcal{F}^{\vee} \ar@{->}[r]
 &\mathcal{K}  \ar@{->}[r]&0. \\}
\end{equation*}
\end{enumerate}
Moreover, $\mathcal{Q},\mathcal{K}$ are independent of the choice of ample divisors $H_1, \ldots, H_{n-1}$. 
\end{lem}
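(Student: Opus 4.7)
My plan is to construct $\mathcal{Q}$ as the slope-zero part of the Harder--Narasimhan filtration of $\mathcal{F}^\vee$ with respect to $\alpha$, verify numerical flatness via Theorem \ref{almost_nef}(3), and then use this characterization to obtain uniqueness and independence from the polarization.

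First I would check that $\mu^{max}_\alpha(\mathcal{F}^\vee) \le 0$. For a saturated subsheaf $\mathcal{E} \subset \mathcal{F}^\vee$, dualizing the sequence $0 \to \mathcal{E} \to \mathcal{F}^\vee \to \mathcal{F}^\vee/\mathcal{E} \to 0$ yields a morphism $\mathcal{F} = \mathcal{F}^{\vee\vee} \to \mathcal{E}^\vee$ whose cokernel is supported in codimension $\ge 2$ (since the torsion-free quotient $\mathcal{F}^\vee/\mathcal{E}$ has $\mathcal{E}xt^1$ in codimension $\ge 2$). The image is a torsion-free quotient of $\mathcal{F}$, hence almost nef by Theorem \ref{almost_nef}(1), and it shares $c_1$ with $\mathcal{E}^\vee$, so $\mu_\alpha(\mathcal{E}) \le 0$. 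Let $\mathcal{Q}$ be the maximal term of the HN filtration of $\mathcal{F}^\vee$ with slope zero (taking $\mathcal{Q} = 0$ if all HN slopes are strictly negative) and put $\mathcal{K} := \mathcal{F}^\vee/\mathcal{Q}$. By the standard properties of the HN filtration, $\mathcal{Q}$ is saturated in $\mathcal{F}^\vee$, hence reflexive; $\mathcal{K}$ is torsion-free with $\mu^{max}_\alpha(\mathcal{K}) < 0$ whenever $\mathcal{K} \neq 0$; and $\mathcal{Q}$ is $\alpha$-semistable of slope zero.

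The heart of the argument is to show $\mathcal{Q}$ is a numerically flat vector bundle. Dualizing $0 \to \mathcal{Q} \to \mathcal{F}^\vee \to \mathcal{K} \to 0$ and using that $\mathcal{E}xt^1(\mathcal{F}^\vee, \mathcal{O}_X)$ is supported in codimension $\ge 3$ (because $\mathcal{F}^\vee$ is reflexive), I obtain a morphism $\mathcal{F} \to \mathcal{Q}^\vee$ whose image coincides with $\mathcal{Q}^\vee$ outside a set of codimension $\ge 2$. Thus $\mathcal{Q}^\vee$ is almost nef by Theorem \ref{almost_nef}(1), and hence $c_1(\mathcal{Q}^\vee) = -c_1(\mathcal{Q})$ is pseudo-effective by the characterization of \cite{BDPP} (restrictions to general curves in covering families are nef). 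Combined with $c_1(\mathcal{Q}^\vee) \cdot \alpha = 0$ and the fact that a pseudo-effective class pairing to zero with a complete intersection of ample divisors must be numerically trivial (Hodge index on a surface section combined with weak Lefschetz), we conclude $c_1(\mathcal{Q}) \equiv 0$ numerically. Theorem \ref{almost_nef}(3) applied to the reflexive almost nef sheaf $\mathcal{Q}^\vee$ now gives that $\mathcal{Q}^\vee$ is a numerically flat vector bundle, and so is its dual $\mathcal{Q}$. The crux of the proof is precisely this passage from $c_1 \cdot \alpha = 0$ to $c_1 \equiv 0$ numerically, which leverages the pseudo-effectivity of $-c_1(\mathcal{Q})$ supplied by the almost nef hypothesis.

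For uniqueness and independence of $\alpha$, I would characterize $\mathcal{Q}$ intrinsically as the maximal numerically flat subsheaf of $\mathcal{F}^\vee$. Any numerically flat subsheaf $\mathcal{Q}' \subset \mathcal{F}^\vee$ is $\beta$-semistable of slope zero for every polarization $\beta$, so it is contained in the HN slope-zero piece of $\mathcal{F}^\vee$ with respect to $\alpha$, namely $\mathcal{Q}$; by maximality, $\mathcal{Q}$ is the largest numerically flat subsheaf, a description that does not involve $\alpha$. Given a second decomposition $0 \to \mathcal{Q}'' \to \mathcal{F}^\vee \to \mathcal{K}'' \to 0$ satisfying (1)--(3), the composition $\mathcal{Q} \hookrightarrow \mathcal{F}^\vee \twoheadrightarrow \mathcal{K}''$ vanishes because $\mu^{min}_\alpha(\mathcal{Q}) = 0 > \mu^{max}_\alpha(\mathcal{K}'')$, giving $\mathcal{Q} \subset \mathcal{Q}''$; the reverse inclusion follows by swapping roles, yielding $\mathcal{Q} = \mathcal{Q}''$ and hence $\mathcal{K} = \mathcal{K}''$.
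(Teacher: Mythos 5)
Your proposal follows essentially the same route as the paper: take $\mathcal{Q}$ to be the maximal destabilizing (slope-zero) term of the Harder--Narasimhan filtration of $\mathcal{F}^{\vee}$, deduce that $\mathcal{Q}^{\vee}$ is almost nef with pseudo-effective $c_1(\mathcal{Q}^{\vee})$ killing $\alpha$, conclude $c_1(\mathcal{Q})\equiv 0$ and invoke Theorem \ref{almost_nef}(3) for numerical flatness, then get uniqueness and polarization-independence by slope arguments exploiting that numerically flat bundles are $\beta$-semistable of slope zero for every $\beta$. The main difference is cosmetic: you spell out the dualization steps (using that $\mathcal{E}xt^1$ of a torsion-free sheaf is supported in codimension $\ge 2$) and the verification $\mu_\alpha^{\max}(\mathcal{F}^{\vee})\le 0$ that the paper leaves implicit, and you phrase uniqueness through the intrinsic characterization of $\mathcal{Q}$ as the maximal numerically flat subsheaf, which is a cleaner packaging of the same contradiction argument the paper runs with the zero map $\mathcal{Q}\to\mathcal{K}'$.
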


\begin{proof}
(Existence.)
If $\mu_{\alpha}^{min}(\mathcal{F})>0$, we can take $\mathcal{Q} =0$ and $\mathcal{K}  = \mathcal{F}^{\vee}$.
From $\mu_{\alpha}^{min}(\mathcal{F})\ge0$, 
we may assume that $\mu_{\alpha}^{min}(\mathcal{F})=0$.
Let $\mathcal{Q} $ be the maximal destabilizing sheaf with respect to $\alpha$ of $\mathcal{F}^{\vee}$. Set $\mathcal{K} := \mathcal{F}^{\vee} / \mathcal{Q}$.
Condition (3) is clear, thus we prove conditions (1) and (2).

By Theorem \ref{almost_nef}, $\mathcal{Q}^{\vee}$ is almost nef.
Since $-c_1(\mathcal{Q})$ is pseudo-effective and $\mu_{\alpha} (\mathcal{Q}) =\mu_{\alpha}^{max}(\mathcal{F}^{\vee})=0$, 
we have $c_1(\mathcal{Q}) =0$, and finally $\mathcal{Q}$ is a numerically flat vector bundle by Theorem \ref{almost_nef}.

We show the condition (2).
To obtain a contradiction, suppose that $\mu_{\alpha}^{max}(\mathcal{K})\ge0$ and $\mathcal{K}\neq 0$.
We take a maximal destabilizing sheaf $\mathcal{K}' $ with respect to $\alpha$ of $\mathcal{K}$. 
Then there exists $\mathcal{Q}' \subset \mathcal{F}^{\vee}$ such that 
\begin{equation*}
\xymatrix@C=25pt@R=20pt{
0\ar@{->}[r]& \mathcal{Q}  \ar@{->}[r]&  \mathcal{Q}' \ar@{->}[r]
 &\mathcal{K}'  \ar@{->}[r]&0. \\}
\end{equation*}
Hence 
$
0 \ge {\rm rk}\mathcal{Q}' \mu_{\alpha} (\mathcal{Q}') ={\rm rk}\mathcal{K}'  \mu_{\alpha} (\mathcal{K}' ) \ge 0, 
$
which contradicts the maximality of $\mathcal{Q}$.

(Uniquness.)
We assume that there exist torsion-free coherent sheaves $\mathcal{Q},\mathcal{K}$ and $\mathcal{Q}',\mathcal{K}'$ which satisfy the conditions (1)-(3).
Then we get the following exact sequence
\begin{equation*}
\xymatrix@C=25pt@R=20pt{
0\ar@{->}[r]&\mathcal{Q}  \ar@{->}[r]&\mathcal{F}^{\vee} \ar@{->}[r]\ar@{=}[d]&\mathcal{K}\ar@{->}[r]&0 \\
0\ar@{->}[r]&\mathcal{Q}'  \ar@{->}[r]&\mathcal{F}^{\vee} \ar@{->}[r]&\mathcal{K}'\ar@{->}[r]&0. \\   
}
\end{equation*}
We thus get a map $\beta: \mathcal{Q} \rightarrow \mathcal{K}'$. 
From $\mu_{\alpha}^{min}(\mathcal{Q}) =0>\mu_{\alpha}^{max}(\mathcal{K}')$,
$\beta$ is a zero map. 
Hence we obtain an injective morphism $ \mathcal{Q} \rightarrow \mathcal{Q}' $.
By a similar argument, we obtain an injective morphism $ \mathcal{Q}' \rightarrow \mathcal{Q}$, therefore 
$ \mathcal{Q}'\cong\mathcal{Q}$.

The proof of the last statement is the same as that of uniqueness,
since any numerically flat vector bundle is $\alpha'$-semistable, where $\alpha' := H'_1\cdots H'_{n-1}$ for any ample divisors $H'_1, \ldots, H'_{n-1}$ on $X$.
\end{proof}

\begin{thmdefn}
\label{Fujita_positive}
Under the hypotheses of Lemma \ref{Fujitafiltration}, we assume that $\mathcal{F}$ is an almost nef vector bundle.
Then there exist unique vector bundles $Q,G$ which satisfy the following conditions.
\begin{enumerate}
\renewcommand{\theenumi}{\alph{enumi}}
\item $Q$ is numerically flat. 
\item $G$ is almost nef and  $\mu_{\alpha}^{min}(G)>0$  unless $G = 0$.
\item We have the following exact sequence of vector bundles
\begin{equation*}
\xymatrix@C=25pt@R=20pt{
0\ar@{->}[r]& G \ar@{->}[r]& \mathcal{F}  \ar@{->}[r]
 &Q \ar@{->}[r]&0. \\}
\end{equation*}
\end{enumerate}
$G$ is called the {\it positive part} of $\mathcal{F}$.
\end{thmdefn}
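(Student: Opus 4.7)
The plan is to derive this by dualizing Lemma~\ref{Fujitafiltration}. Applying that lemma to $\mathcal{F}$ yields an exact sequence
\[
0 \to \mathcal{Q} \to \mathcal{F}^{\vee} \to \mathcal{K} \to 0
\]
with $\mathcal{Q}$ a numerically flat vector bundle and $\mu_{\alpha}^{\max}(\mathcal{K}) < 0$ unless $\mathcal{K}=0$. Setting $Q := \mathcal{Q}^{\vee}$ already gives condition (a). Let $\phi \colon \mathcal{F} \to Q$ be the dual of the inclusion $\mathcal{Q} \hookrightarrow \mathcal{F}^{\vee}$. Since $\mathcal{Q}$ is a subbundle of $\mathcal{F}^{\vee}$ on an open subset, $\phi$ is generically surjective. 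My goal is to upgrade this to surjectivity at every point, and then set $G := \ker \phi$.

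The main obstacle is this upgrade. I plan to use an exterior-power trick together with Theorem~\ref{almost_nef}~(2). Let $s := \mathrm{rk}(Q)$. Then $\Lambda^{s}\phi \colon \Lambda^{s}\mathcal{F} \to \det Q$ may be viewed as an element of $H^{0}(X, \mathcal{E}^{\vee})$, where $\mathcal{E} := \Lambda^{s}\mathcal{F} \otimes (\det Q)^{\vee}$. The vector bundle $\mathcal{E}$ is almost nef: for any curve $C$ outside the countable exceptional family of $\mathcal{F}$, $\Lambda^{s}\mathcal{F}|_{C}$ is nef while $(\det Q)^{\vee}$ is a numerically trivial line bundle, so $\mathcal{E}|_{C}$ is nef on $C$. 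The generic surjectivity of $\phi$ makes $\Lambda^{s}\phi$ a nonzero global section of $\mathcal{E}^{\vee}$, and by Theorem~\ref{almost_nef}~(2) it has no zeros; fibrewise non-vanishing of $\Lambda^{s}\phi$ at a point is exactly surjectivity of $\phi$ at that point.

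With $\phi$ surjective everywhere, $G := \ker \phi$ is a subbundle of $\mathcal{F}$ of constant rank, yielding the exact sequence in (c). Dualizing this sequence identifies $G^{\vee} \cong \mathcal{K}$, so $\mu_{\alpha}^{\min}(G) = -\mu_{\alpha}^{\max}(\mathcal{K}) > 0$ unless $G=0$. For the almost nefness of $G$ in (b), I would restrict to a general curve $C$ on which $\mathcal{F}|_{C}$ is nef and argue via the Harder-Narasimhan filtration $0 = E_{0} \subsetneq \cdots \subsetneq E_{m} = \mathcal{F}|_{C}$ with slopes $\mu_{1} > \cdots > \mu_{m} \geq 0$. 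Since $Q|_{C}$ is semistable of slope $0$, slope comparison forces $E_{m-1}$ (the positive-slope part) to lie inside $G|_{C}$, and $G|_{C}/E_{m-1}$ embeds as a slope-zero subsheaf of the semistable slope-zero piece $E_{m}/E_{m-1}$, making it semistable of degree zero and nef on $C$; then $G|_{C}$ is nef as an extension of nef bundles.

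Finally, uniqueness of $(G, Q)$ follows by reversing the dualization: any second pair $(G', Q')$ satisfying (a)-(c) produces, upon dualizing, a decomposition $0 \to (Q')^{\vee} \to \mathcal{F}^{\vee} \to (G')^{\vee} \to 0$ with $(Q')^{\vee}$ numerically flat and $\mu_{\alpha}^{\max}((G')^{\vee}) < 0$, so by the uniqueness portion of Lemma~\ref{Fujitafiltration} one has $(Q')^{\vee} \cong \mathcal{Q}$ and $(G')^{\vee} \cong \mathcal{K}$, hence $Q' \cong Q$ and $G' \cong G$.
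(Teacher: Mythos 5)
Your proposal is correct and reproduces the paper's own argument: both proofs start from the exact sequence of Lemma~\ref{Fujitafiltration}, apply the top-exterior-power section trick together with Theorem~\ref{almost_nef}~(2) to the (same) almost nef bundle $\Lambda^{\mathrm{rk}\,\mathcal{Q}}\mathcal{F}\otimes\det\mathcal{Q}$ to upgrade the sheaf map to a bundle map, and then dualize. You are in fact somewhat more thorough than the paper's terse proof, which stops at ``the proof is complete'' without spelling out the almost nefness of $G$ (your Harder--Narasimhan argument on a general curve) or the uniqueness (which, as you say, drops out of the uniqueness clause of Lemma~\ref{Fujitafiltration}).
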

\begin{proof}
There exist unique torsion-free coherent sheaves $\mathcal{Q},\mathcal{K}$ which satisfy the conditions (1)-(3) in Lemma \ref{Fujitafiltration}. We have the following exact sequence 
\begin{equation*}
\xymatrix@C=25pt@R=20pt{
0\ar@{->}[r]& \mathcal{Q} \ar@{->}[r]^{\tau \,\,\,\,\,\,}& \mathcal{F}^{\vee}  \ar@{->}[r]
 &\mathcal{K} \ar@{->}[r]&0. \\}
\end{equation*}
Set $k:={\rm rk}\mathcal{Q}$. 
Since 
$\wedge^{k}\mathcal{F}\otimes \det \mathcal{Q}$ is almost nef, 
$\wedge^{k}\tau $ has no zero point by Theorem \ref{almost_nef}.
Hence $\tau$ is an injective morphism between vector bundles and $\mathcal{K}$ is a vector bundle. 
Set $G := \mathcal{K}^{\vee}$ and $Q:=\mathcal{Q}^{\vee}$, 
the proof is complete. 
\end{proof}

\begin{ex}
A positive part is not necessarily a maximal destabilizing sheaf. 
Set $E := \mathcal{O}_{\mathbb{CP}^{1}}(1)\oplus \mathcal{O}_{\mathbb{CP}^{1}}(2)$, then 
$E$ is a nef vector bundle on $\mathbb{CP}^1$.
The maximal destabilizing sheaf of $E$ is $\mathcal{O}_{\mathbb{CP}^{1}}(2)$.
However, the positive part of $E$ is $E$.

\end{ex}

\begin{thmdefn}
\label{fujitadecom}
Under the hypotheses of Lemma \ref{Fujitafiltration}, we assume that $\mathcal{F}$ has a positively curved singular hermitian metric.
Then there exist unique reflexive coherent sheaves $Q, G$ which satisfy the following conditions.
\begin{enumerate}
\renewcommand{\theenumi}{\alph{enumi}}
\item $Q$ is a hermitian flat vector bundle.
\item$G$ has a positively curved singular hermitian metric and $\mu_{\alpha}^{min}(G)>0$ unless $G =0$.
\item $\mathcal{F} \cong Q \oplus G$.
\end{enumerate}
This decomposition is called {\it Fujita's decomposition of $\mathcal{F}$}.
\end{thmdefn}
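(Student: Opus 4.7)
The plan is to derive the desired decomposition of $\mathcal{F}$ from the algebraic filtration of $\mathcal{F}^{\vee}$ supplied by Lemma \ref{Fujitafiltration}, and then to exploit the positively curved singular hermitian metric $h$ on $\mathcal{F}$ to upgrade that short exact sequence into a direct sum with the prescribed metric structure. Since the existence of a positively curved singular hermitian metric implies almost nefness (as recorded in the implication diagram preceding Proposition \ref{MR}), Lemma \ref{Fujitafiltration} applies and yields
$$
0 \to \mathcal{Q} \to \mathcal{F}^{\vee} \to \mathcal{K} \to 0,
$$
with $\mathcal{Q}$ a numerically flat vector bundle and either $\mathcal{K} = 0$ or $\mu_{\alpha}^{max}(\mathcal{K}) < 0$. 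Dualizing this sequence and using $\mathcal{F}^{\vee\vee}=\mathcal{F}$ produces a generically surjective morphism $\pi\colon \mathcal{F}\to Q$ with $Q := \mathcal{Q}^{\vee}$ numerically flat; set $G := \mathcal{K}^{\vee}$, which is reflexive. The slope bound $\mu_{\alpha}^{min}(G) = -\mu_{\alpha}^{max}(\mathcal{K}) > 0$ whenever $G \neq 0$ then follows directly from the construction.

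The central analytic step is to show that $\pi$ admits a holomorphic splitting and that the factor $Q$ is not merely numerically flat but hermitian flat. I would observe that the dual metric $h^{\vee}$ on $\mathcal{F}^{\vee}$ restricts to a singular hermitian metric on $\mathcal{Q}$ for which $\log |u|_{h^{\vee}}$ remains plurisubharmonic for local sections $u$ of $\mathcal{Q}$; equivalently, $h$ descends along $\pi$ to a quotient singular hermitian metric $h_{Q}$ that is positively curved. Combined with $c_{1}(Q)=0$, a curvature argument in the singular category (in the same spirit as the proof of Theorem \ref{almost_nef}(3)) forces $h_{Q}$ to be smooth with vanishing curvature, so $Q$ is hermitian flat. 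Orthogonal projection with respect to this smooth flat metric on $\mathcal{Q}\subset \mathcal{F}^{\vee}$ then splits the short exact sequence, and dualizing yields $\mathcal{F}\cong Q\oplus G$; as a direct summand of $(\mathcal{F},h)$, $G$ automatically inherits a positively curved singular hermitian metric. Uniqueness follows by the slope argument already used in Lemma \ref{Fujitafiltration}: in any two decompositions $\mathcal{F}\cong Q_{1}\oplus G_{1}\cong Q_{2}\oplus G_{2}$ satisfying (a)--(c), the composition $Q_{1}\hookrightarrow \mathcal{F}\twoheadrightarrow G_{2}$ vanishes, since its image would be a subsheaf of $G_{2}$ of slope $0$, contradicting $\mu_{\alpha}^{min}(G_{2})>0$; a symmetric argument yields $Q_{1}\cong Q_{2}$ and hence $G_{1}\cong G_{2}$.

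The main obstacle is the analytic step above: transferring the vanishing of $c_{1}(Q)$ together with the positivity of the induced metric $h_{Q}$ into honest smoothness and flatness of $h_{Q}$, and thereby into a holomorphic splitting of the short exact sequence on $\mathcal{F}^{\vee}$. This is the genuinely new input that extends the classical Fujita decomposition of \cite{Fujita} and \cite{CD} to reflexive sheaves with singular hermitian metrics, and is precisely where the singular hermitian machinery of \cite{HPS18} enters.
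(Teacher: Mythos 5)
Your outline correctly identifies the algebraic scaffolding: apply Lemma~\ref{Fujitafiltration} to obtain $0\to\mathcal{Q}\to\mathcal{F}^{\vee}\to\mathcal{K}\to 0$, dualize to produce $G=\mathcal{K}^{\vee}$ and $Q=\mathcal{Q}^{\vee}$, verify the slope bound $\mu_{\alpha}^{\min}(G)>0$ via $\mu_{\alpha}^{\min}(\mathcal{K}^{\vee})=-\mu_{\alpha}^{\max}(\mathcal{K})$, and prove uniqueness by the vanishing of morphisms between the flat and the positively-sloped factors. All of that matches the paper. However, you explicitly flag the central analytic step as an obstacle, and the sketch you give there does not go through as stated. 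You propose to first prove that $Q$ is hermitian flat via an unspecified ``curvature argument in the singular category,'' and then use ``orthogonal projection with respect to this smooth flat metric on $\mathcal{Q}\subset\mathcal{F}^{\vee}$'' to split the sequence. Orthogonal projection inside $\mathcal{F}^{\vee}$ requires a metric on the ambient sheaf $\mathcal{F}^{\vee}$, not just on the subsheaf $\mathcal{Q}$; the relevant ambient metric is the singular $h^{\vee}$, and proving that the $h^{\vee}$-orthogonal projection onto $\mathcal{Q}$ is \emph{holomorphic} (i.e., that the second fundamental form vanishes) is precisely the delicate point you cannot take for granted in the singular setting. A smooth flat metric on the subsheaf alone gives you nothing to project with.

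The paper closes this gap differently and more directly. Starting from the same filtration, it restricts to the maximal open set $X_0$ where $\mathcal{F}$ is locally free and invokes \cite[Theorem~1.3]{HIM}, a splitting theorem for reflexive sheaves with positively curved singular hermitian metrics, to conclude at once that $\mathcal{F}^{\vee}|_{X_0}\cong\mathcal{Q}|_{X_0}\oplus\mathcal{K}|_{X_0}$; reflexivity then extends the splitting across codimension two, yielding $\mathcal{F}\cong Q\oplus G$. Only afterwards does it establish hermitian flatness of $Q$, from $c_1(Q)=0$ together with the inherited positively curved metric, by citing \cite[Lemma~3.5]{HIM}. So the paper's order is split first, then flatten; your proposed order is flatten first, then split, and the splitting step is exactly where you would have had to reproduce the content of \cite[Theorem~1.3]{HIM} anyway. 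Identifying and citing those two results from the companion paper is what your argument is missing.
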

\begin{proof}
There exist unique torsion-free coherent sheaves $\mathcal{Q},\mathcal{K}$ which satisfy the conditions (1)-(3) in Lemma \ref{Fujitafiltration}.
Let $X_{0}$ denote the maximal Zariski open set where $\mathcal{F}$ is locally free.
Since $\mathcal{F}$ has a positively curved singular hermitian metric,
$$
 \mathcal{F}^{\vee}|_{X_{0}} \cong \mathcal{Q}|_{X_{0}} \oplus \mathcal{K}|_{X_{0}}
$$
by \cite[Theorem 1.3]{HIM}.
Write $G := \mathcal{K}^{\vee}$ and $Q:=\mathcal{Q}^{\vee}$, then 
$\mathcal{F} \cong Q \oplus G.$
Since $c_{1}(Q)=0$ and $Q$ has a positively curved singular hermitian metric, $Q$ is a hermitian flat vector bundle by \cite[Lemma 3.5]{HIM}.
\end{proof}

In particular, we obtain Fujita's decomposition theorem on a direct image sheaf of a relative pluricanonical line bundle.
\begin{cor}[{\textit{cf.} \cite[Theorem]{Fujita}, \cite[Theorem 1.1]{CD}, \cite[Theorem 2]{CK}}]
 \label{Fujita_decomposition}
Let $f : X \rightarrow Y$ be a surjective morphism with connected fibers from a compact K\"ahler manifold $X$ to a smooth $d$-dimensional projective variety $Y$.
For any $m \in \mathbb{N}_{>0}$, 
there is a unique decomposition $$( f_{*}(mK_{X/Y}) )^{\vee\vee} \cong Q \oplus G,$$
where $Q$ is a hermitian flat vector bundle and $G$ is an $(H_1, \ldots, H_{d -1})$-generically ample reflexive coherent sheaf for any ample divisors $H_1, \ldots, H_{d -1}$ on $Y$.

In particular, if $Y$ is a curve, 
for any $m \in \mathbb{N}_{>0}$, 
there is a unique decomposition $$ f_{*}(mK_{X/Y})  \cong Q \oplus G,$$
where $Q$ is a hermitian flat vector bundle and $G$ is an ample vector bundle.

\end{cor}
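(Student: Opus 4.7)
The plan is to reduce to Theorem-Definition \ref{fujitadecom} by supplying the correct positivity input for the reflexive direct image. Set $\mathcal{F} := ( f_{*}(mK_{X/Y}) )^{\vee\vee}$, a reflexive coherent sheaf on the smooth projective variety $Y$. The first step, and the one that does all the analytic work, is to invoke the by-now standard result that $\mathcal{F}$ admits a positively curved singular hermitian metric for every $m \ge 1$; this is due to Berndtsson--P\u{a}un and P\u{a}un--Takayama in the projective case, and is extended to the K\"ahler setting by work of Hacon--Popa--Schnell and Cao. I would cite this result and use it as a black box, since it is not proved in the excerpt.

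Once the metric is in place, Theorem-Definition \ref{fujitadecom} produces a unique decomposition $\mathcal{F} \cong Q \oplus G$ in which $Q$ is a hermitian flat vector bundle and $G$ is a reflexive coherent sheaf satisfying $\mu_{\alpha}^{min}(G) > 0$ for $\alpha = H_1 \cdots H_{d-1}$, whenever $G \neq 0$. Crucially, the final clause of Lemma \ref{Fujitafiltration} (independence of $Q$ and $G$ from the choice of ample tuple) guarantees that the same pair $(Q,G)$ does the job simultaneously for \emph{every} choice of $(H_1, \ldots, H_{d-1})$. To promote the slope inequality $\mu_{\alpha}^{min}(G) > 0$ to $(H_1, \ldots, H_{d-1})$-generic ampleness of $G$, I apply Proposition \ref{MR} directly; this is exactly its content.

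For the ``in particular'' clause with $d = 1$, every torsion-free coherent sheaf on a smooth curve is locally free, so $f_{*}(mK_{X/Y})$ already coincides with its reflexive hull and the decomposition above applies verbatim. On a smooth projective curve, a vector bundle $G$ with $\mu^{min}(G) > 0$ is ample by Hartshorne's criterion (every torsion-free quotient has positive degree), which sharpens the conclusion to ``$G$ is an ample vector bundle'' as desired.

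The main obstacle is therefore not within the excerpt itself: it is the existence of a positively curved singular hermitian metric on $(f_{*}(mK_{X/Y}))^{\vee\vee}$, which must be imported from the literature. Once that analytic input is available, the argument is a clean assembly of Lemma \ref{Fujitafiltration}, Theorem-Definition \ref{fujitadecom}, and Proposition \ref{MR}, together with a short remark about torsion-free sheaves on curves to deduce the second statement.
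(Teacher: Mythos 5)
Your proposal is correct and follows essentially the same route as the paper: the paper likewise imports the existence of a positively curved singular hermitian metric on $f_{*}(mK_{X/Y})$ (citing Wang's Theorem B, which covers precisely the compact K\"ahler over projective setting) and then concludes by Theorem-Definition \ref{fujitadecom} together with Proposition \ref{MR}. The only cosmetic difference is the choice of reference for the metric and your slightly more explicit treatment of the curve case.
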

\begin{proof}
By \cite[Theorem B]{wang}, 
$f_{*}(mK_{X/Y})$ has a positively curved singular hermitian metric, which completes the proof by Proposition \ref{MR} and Theorem-Definition \ref{fujitadecom}.
\end{proof}

\section{Almost nef regular foliations}
\subsection{Structure theorems of almost nef regular foliations}

\begin{thm}
\label{psefregular}
Let $X$ be a smooth $n$-dimensional projective variety and 
$\mathcal{F}$ be a rank $r$ almost nef regular foliation on $X$ with $c_1(\mathcal{F})\neq0$.
Then there exist a smooth morphism $f : X \rightarrow Y$ between smooth projective varieties and a numerically flat regular foliation $\mathcal{G}$ on $Y$, such that all fibers of $f$ are rationally connected,
$\mathcal{F}=f^{-1}\mathcal{G}$, and there is an exact sequence
\begin{equation*}
\xymatrix@C=25pt@R=20pt{
0\ar@{->}[r]&T_{X/Y} \ar@{->}[r]&\mathcal{F} \ar@{->}[r]
 & f^{*}\mathcal{G} \ar@{->}[r]&0. \\}
\end{equation*}
 
\end{thm}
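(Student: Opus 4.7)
The plan is to realise $f$ as (a refinement of) the morphism determined by the algebraic part of $\mathcal{F}$, then exploit the regularity of $\mathcal{F}$ to upgrade this rational map to a smooth morphism, and finally descend $\mathcal{F}$ via Lemma \ref{decent}. First, I would show that $\mathcal{F}$ is not purely transcendental. By Theorem \ref{almost_nef}(1), the almost nef hypothesis gives $\mu_{\alpha}^{\min}(\mathcal{F}) \ge 0$ for every movable class $\alpha$, so the determinant $-K_{\mathcal{F}} = c_1(\mathcal{F})$ is pseudo-effective by BDPP duality. If $\mathcal{F}$ were purely transcendental, Corollary \ref{psef_transcendental} would give $K_{\mathcal{F}}$ pseudo-effective as well, forcing $c_1(\mathcal{F}) \equiv 0$, contradicting the hypothesis. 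Hence the algebraic part $\mathcal{H} \subset \mathcal{F}$ is non-zero, and Definition \ref{alge_trans} supplies an essentially equidimensional dominant rational map $f_0 : X \dashrightarrow Y_0$ with a purely transcendental foliation $\mathcal{G}_0$ on $Y_0$ satisfying $\mathcal{F} = f_0^{-1}\mathcal{G}_0$.

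Next I would refine $f_0$ so that its general fibres are rationally connected. Taking a resolution $\pi : \widetilde X \to X$ and a morphism $\widetilde f : \widetilde X \to Y'$ as in Proposition \ref{raynord_foliation}, I would replace $\widetilde f$ by the relative MRC fibration $\widehat f : \widetilde X \to \widehat Y$ of its general fibre. The fibres of $\widehat f$ are rationally connected algebraic subvarieties tangent to $\pi^{*}\mathcal{F}$; after pushing down through $\pi$ they define an algebraic subfoliation $\mathcal{H}_{\mathrm{rc}} \subset \mathcal{F}$ with rationally connected general leaf. Using Lemma \ref{almost_holomorphic} to obtain a compact leaf of $\mathcal{H}_{\mathrm{rc}}$, and the fact that $\mathcal{F}$ is a subbundle of $T_X$, a standard rigidity and deformation argument for rational curves in a foliated ambient manifold then forces every leaf of $\mathcal{H}_{\mathrm{rc}}$ to be compact of the same dimension, and shows that the leaves are the fibres of a proper smooth morphism $f : X \to Y$ between smooth projective varieties with $T_{X/Y} = \mathcal{H}_{\mathrm{rc}}$. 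Lemma \ref{decent} then produces a regular foliation $\mathcal{G} \subset T_Y$ with $\mathcal{F} = f^{-1}\mathcal{G}$ and the required short exact sequence $0 \to T_{X/Y} \to \mathcal{F} \to f^{*}\mathcal{G} \to 0$.

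Finally I would verify that $\mathcal{G}$ is numerically flat. The quotient $f^{*}\mathcal{G}$ is almost nef by Theorem \ref{almost_nef}(1), and by taking a generic curve $C \subset X$ outside the countable bad locus the induced curve $f(C) \subset Y$ satisfies $f^{*}\mathcal{G}|_{C} = (f|_{C})^{*}\mathcal{G}|_{f(C)}$, so the finite-pullback characterisation of nefness transfers almost nefness from $f^{*}\mathcal{G}$ to $\mathcal{G}$ itself. Because $f$ was obtained as the relative MRC of the algebraic part of $\mathcal{F}$, any algebraic leaf of $\mathcal{G}$ on $Y$ would pull back to a larger algebraic subvariety of $X$ tangent to $\mathcal{F}$, contradicting the maximality of $\mathcal{H}_{\mathrm{rc}}$; thus $\mathcal{G}$ is purely transcendental and $K_{\mathcal{G}}$ is pseudo-effective by Corollary \ref{psef_transcendental}. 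Combined with pseudo-effectivity of $-K_{\mathcal{G}}$ coming from almost nefness, this gives $c_1(\mathcal{G}) \equiv 0$, and Theorem \ref{almost_nef}(3) concludes that $\mathcal{G}$ is a numerically flat regular foliation.

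The main obstacle will be the passage from the rational fibration $\widehat f$ to a smooth morphism $f$ with $T_{X/Y} = \mathcal{H}_{\mathrm{rc}}$ and smooth base $Y$. A priori the leaves of $\mathcal{H}_{\mathrm{rc}}$ could degenerate and the base of the associated family could acquire singularities, and ruling this out is exactly where the regularity of $\mathcal{F}$, forcing any saturated subsheaf of $\mathcal{F}$ to behave well, combines with the rational connectedness of the generic leaf to give uniform smooth behaviour by deformation of rational curves.
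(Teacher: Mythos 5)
Your route diverges from the paper's: you reconstruct $f$ from the algebraic part of $\mathcal{F}$ and the relative MRC of the associated fibration, which is actually the machinery the paper develops later for Theorem~\ref{posiotive_mrc}, whereas the proof of Theorem~\ref{psefregular} itself is considerably shorter --- it invokes the Fujita-type decomposition (Theorem-Definition~\ref{Fujita_positive}) directly to produce a \emph{subbundle} $G\subset\mathcal{F}$ with $\mu^{\min}_\alpha(G)>0$ and \emph{numerically flat quotient} $\mathcal{F}/G$, then applies Theorem~\ref{CP15} and \cite[Corollary 2.11]{Hor07} to get the smooth MRC-type morphism $f$ with $G=T_{X/Y}$, and Lemma~\ref{decent} to descend; the numerical flatness of $\mathcal{G}$ is then automatic because $f^{*}\mathcal{G}\cong\mathcal{F}/G$ is already numerically flat by construction. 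Your approach, while conceptually related, defers exactly the hard point (identifying the positive part with the relative MRC of the algebraic part) and has to re-derive the slope estimates; the paper's ordering lets the short exact sequence do that work for free.

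There are two concrete gaps. First, you never establish that $\mathcal{H}_{\mathrm{rc}}$ is a \emph{regular} foliation (a subbundle of $T_X$), which is essential before you can invoke any rigidity/MRC result giving a smooth morphism $f$ with $T_{X/Y}=\mathcal{H}_{\mathrm{rc}}$. Being a saturated subsheaf of a subbundle of $T_X$ does not make it a subbundle; the paper has to work for this (see the dual-vector-bundle argument in Lemma~\ref{psefalgebraic}, or the automatic subbundle property that comes from Theorem-Definition~\ref{Fujita_positive} via the vanishing-locus argument with $\wedge^{k}\tau$). The phrase ``a standard rigidity and deformation argument'' is exactly \cite[Corollary 2.11]{Hor07}, and it has regularity of the foliation as a hypothesis, so you cannot use it without first closing that gap.

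Second, and more seriously, the claim in your final paragraph that $\mathcal{G}$ is purely transcendental is false in general. You took $f$ to be the relative MRC of the fibration associated to the algebraic part $\mathcal{H}$ of $\mathcal{F}$, so $Y$ still maps onto the base $Z$ of the algebraic part, and the foliation on $Y$ induced by $Y\dashrightarrow Z$ is an \emph{algebraic} subfoliation of $\mathcal{G}$ whenever the fibers of $g:X\dashrightarrow Z$ are not rationally connected (i.e. whenever $\mathcal{H}_{\mathrm{rc}}\subsetneq\mathcal{H}$). In that case $\mathcal{G}$ has a non-trivial algebraic part and Corollary~\ref{psef_transcendental} does not apply to give pseudo-effectivity of $K_{\mathcal{G}}$, so your chain $c_1(\mathcal{G})\equiv0\Rightarrow\mathcal{G}$ numerically flat collapses. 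To salvage this you would need something like Step 1 of the paper's Theorem~\ref{posiotive_mrc}: the ramification/weak-positivity computation showing $K_{\mathcal{H}_{\mathrm{rc}}}\equiv K_{\mathcal{H}}\equiv K_{\mathcal{F}}$, whence $c_1(\mathcal{F}/\mathcal{H}_{\mathrm{rc}})=0$ and then Theorem~\ref{almost_nef}(3) gives numerical flatness of the quotient; but none of that is in your sketch, and it is precisely the nontrivial content that the Fujita decomposition packages for free.
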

\begin{proof}
Let $H_1, \ldots, H_{n-1}$ be ample divisors on $X$ and $\alpha := H_1 \cdots H_{n-1}$. 
We take the positive part $G$ of $\mathcal{F}$ by Theorem-Definition \ref{Fujita_positive}.  
Then $G$ is a subbundle of $\mathcal{F}$, $\mu^{min}_{\alpha}(G) >0$, and 
$\mu^{max}_{\alpha}(\mathcal{F}/G) = \mu_{\alpha}(\mathcal{F}/G)=0$.
By Theorem \ref{CP15},
$G$ is an algebraically integrable regular foliation with rationally connected leaves. 
By \cite[Corollary 2.11]{Hor07}, we obtain 
a smooth morphism $f : X \rightarrow Y$ between smooth projective varieties such that all fibers of $f$ are rationally connected and $G =T_{X/Y}$.
By Lemma \ref{decent}, there exists a regular foliation $\mathcal{G}$ on $Y$ such that
$\mathcal{F}=f^{-1}\mathcal{G}$ and 
\begin{equation*}
\xymatrix@C=25pt@R=20pt{
0\ar@{->}[r]&T_{X/Y} \ar@{->}[r]&\mathcal{F} \ar@{->}[r]
 & f^{*}\mathcal{G} \ar@{->}[r]&0. \\}
\end{equation*}
Since $f^{*}\mathcal{G}$ is numerically flat, 
$\mathcal{G}$ is so.
\end{proof}

\begin{cor}
\label{ampleregular}
Under the hypotheses of Theorem \ref{psefregular}, let $F$ be a fiber of $f$.  
Then the following statements hold.
\begin{enumerate}
\item If $\mathcal{F}$ is ample, then $X$ is isomorphic to $\mathbb{CP}^n$.
\item If $\mathcal{F}$ is nef, then $F$ is Fano.
\item If $\mathcal{F}$ is V-big, then $\mathcal{F} = T_{X/Y}$ and $F$ is isomorphic to $\mathbb{CP}^{r}$.
\item If $\mathcal{F}$ is nef and V-big, then $X$ is isomorphic to $\mathbb{CP}^n$.
\end{enumerate}
 
\end{cor}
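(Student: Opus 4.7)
The plan is to start from the structural exact sequence
\[
0\longrightarrow T_{X/Y}\longrightarrow \mathcal{F}\longrightarrow f^{*}\mathcal{G}\longrightarrow 0
\]
of Theorem \ref{psefregular} and to restrict it to a general fiber $F$ of $f$. Since $\mathcal{G}$ is pulled back from $Y$, one has $f^{*}\mathcal{G}|_{F}\cong \mathcal{O}_{F}^{\oplus s}$ with $s=\operatorname{rk}\mathcal{G}$, so on $F$ the sequence reads
\[
0\longrightarrow T_{F}\longrightarrow \mathcal{F}|_{F}\longrightarrow \mathcal{O}_{F}^{\oplus s}\longrightarrow 0,
\]
and in particular $-K_{F}=c_{1}(T_{F})=c_{1}(\mathcal{F}|_{F})$.

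For (1), $\mathcal{F}\subset T_{X}$ is an ample subsheaf of rank $r$, so $X\cong \mathbb{CP}^{n}$ is precisely the Aprodu--Wi\'sniewski / Liu theorem (\cite{AW}, \cite{Liu}) recalled in the introduction. For (3), V-bigness passes to the torsion-free quotient $f^{*}\mathcal{G}$; but a numerically flat vector bundle $E$ of positive rank $s$ is never V-big, because any weakly positive sheaf has pseudo-effective $c_{1}$ while for any $a\geq 1$ and any ample line bundle $A$ the first Chern class
\[
c_{1}(\operatorname{Sym}^{a}E\otimes A^{-1})=\binom{a+s-1}{s-1}\left(\tfrac{a}{s}\,c_{1}(E)-c_{1}(A)\right)=-\binom{a+s-1}{s-1}c_{1}(A)
\]
is a negative multiple of an ample class, hence not pseudo-effective. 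Thus $\mathcal{G}=0$, $\mathcal{F}=T_{X/Y}$, and restricting to $F$ exhibits $T_{F}$ as V-big on a rationally connected manifold of dimension $r$; the standard characterization of projective space by sufficient positivity of the tangent sheaf (Cho--Miyaoka--Shepherd-Barron, \cite{AW}, \cite{Liu}) then gives $F\cong \mathbb{CP}^{r}$.

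For (2), my approach is to prove that $T_{F}$ itself is nef and then to invoke the Campana--Peternell structure theorem: a rationally connected manifold with nef tangent bundle is Fano. I check nefness of $T_{F}$ curve by curve: for any irreducible curve $C\subset F$, $\mathcal{F}|_{C}$ is nef, hence $\mu^{\max}(\mathcal{F}|_{C}^{\vee})=-\mu^{\min}(\mathcal{F}|_{C})\leq 0$. Dualizing the restricted sequence yields
\[
0\longrightarrow \mathcal{O}_{C}^{\oplus s}\longrightarrow \mathcal{F}|_{C}^{\vee}\longrightarrow T_{F}|_{C}^{\vee}\longrightarrow 0,
\]
and for any subsheaf $K\subset T_{F}|_{C}^{\vee}$ its preimage $K'\subset \mathcal{F}|_{C}^{\vee}$ satisfies $\mu(K')=\deg K/(\operatorname{rk}K+s)\leq 0$, forcing $\mu(K)\leq 0$. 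Hence $T_{F}|_{C}$ is nef for every $C$, so $T_{F}$ is nef on $F$. For (4), parts (2) and (3) together identify $X\to Y$ as a smooth $\mathbb{CP}^{r}$-bundle with $\mathcal{F}=T_{X/Y}$ simultaneously nef and V-big; the remaining step, which I expect to be the main obstacle, is to rule out $\dim Y>0$. The natural strategy is to observe that $\mathcal{O}_{\mathbb{P}(\mathcal{F})}(1)$ is nef and big, hence semi-ample by the base point free theorem, and to use the rigid $\mathbb{CP}^{r}$-bundle structure to show that the associated contraction must be trivial---so $\mathcal{F}$ becomes ample and (1) applies to conclude $X\cong \mathbb{CP}^{n}$.
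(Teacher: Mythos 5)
Your items (1) and (2) are fine. For (1), citing \cite{AW} and \cite{Liu} directly is a valid (and shorter) route than the paper's, which instead deduces $X\cong\mathbb{CP}^n$ from its own structural sequence by noting that $-K_{X/Y}$ is then ample and invoking \cite{KMM} and \cite{Mori}. For (2), your curve-by-curve dualization argument showing $T_F$ nef (using that the quotient $\mathcal{O}_C^{\oplus s}$ has degree $0$) is correct and usefully fills in what the paper leaves implicit before citing \cite[Prop.\ 3.10]{DPS}.

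There are, however, genuine gaps in (3) and (4). In (3), the first half---that V-bigness descends to the torsion-free quotient $f^{*}\mathcal{G}$, which is numerically flat of rank $s>0$ and hence cannot be V-big, so $\mathcal{G}=0$ and $\mathcal{F}=T_{X/Y}$---is correct. But the final step does not follow from the references you invoke. Cho--Miyaoka--Shepherd-Barron requires $-K_F\cdot C\ge r+1$ on rational curves $C$ through a general point, and \cite{AW}, \cite{Liu} require an \emph{ample} subsheaf; a V-big tangent bundle is strictly weaker than ample (and in (3) you cannot even assume $T_F$ is nef), and one does not get the length bound $r+1$ from V-bigness by any standard argument. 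The paper instead cites \cite[Cor.\ 7.8]{Mura} and \cite[Cor.\ 1.3]{Iwai}, which are results specifically about V-big tangent sheaves, and these are precisely what is needed here; your proposal substitutes the wrong characterizations. In (4), you explicitly acknowledge that you have not ruled out $\dim Y>0$ and only sketch a ``natural strategy''; as written this is not a proof. (Also, even that sketch is shaky: the base-point-free theorem needs $\mathcal{O}_{\mathbb{P}(\mathcal{F})}(1)-K_{\mathbb{P}(\mathcal{F})}$ nef and big, which you have not verified, and the step from ``semi-ample contraction'' plus ``rigid $\mathbb{CP}^r$-bundle'' to triviality is left entirely open.) The paper handles (4) by combining (3) with positivity results for relative anti-canonical divisors of algebraic fiber spaces, namely \cite[Thm.\ 1.1]{Druel} and \cite[Thm.\ 1.6]{Ejiri}, which force $\dim Y=0$; some such input about the base is indispensable and is missing from your argument.
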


\begin{proof}
Since  $f^{*}\mathcal{G}$ is a numerically flat, 
if $ \mathcal{F}$ is ample (resp. nef, V-big),
then both $T_{X/Y}$ and $T_{F}$ are so
by Theorem \ref{psefregular}.

(1).
Since both $T_{X/Y}$ and $-K_{X/Y}$ are ample, $\dim Y =0$ and $X \cong \mathbb{CP}^n$ by \cite[Corollary 2.8]{KMM} and \cite[Theorem 8]{Mori}.


(2).
$F$ is rationally connected and $T_F$ is nef.
Hence $F$ is Fano by \cite[Proposition 3.10]{DPS}.

(3).
Since both $f^{*}\mathcal{G}$ and $T_{F}$ are V-big, it follows by \cite[Corollary 7.8]{Mura} and \cite[Corollary 1.3]{Iwai}.

(4). The proof is the same as that of (1) by \cite[Theorem 1.1]{Druel}, \cite[Theorem 1.6]{Ejiri}, \cite[Corollary 7.8]{Mura}, and \cite[Corollary 1.3]{Iwai}.
\end{proof}

\begin{rem}
\label{strictl_nef}
By our method in Corollary \ref{ampleregular}, we only know that, if $T_X$ contains a strictly nef regular foliation, then there exists a smooth morphism $f : X \rightarrow Y$ between smooth projective varieties such that all fibers of $f$ are isomorphic to $\mathbb{CP}^{d}$ for some $d \in \mathbb{N}_{>0}$.
However, in \cite[Theorem 1.3]{LOY20}, it was proved that, 
if $T_X$ contains a strictly nef locally free sheaf, then $X$ admits a $\mathbb{CP}^{d}$-bundle structure $f : X \rightarrow Y$ and $Y$ is Brody hyperbolic, i.e., every holomorphic map $h : \mathbb{C} \rightarrow Y$ is constant.
\end{rem}

\begin{ex}

Let $C$ be a smooth curve, $e$ be a nonnegative integer, and $p$ be a point of $C$. 
Set $E:= \mathcal{O}_{C} \oplus \mathcal{O}_{C}(-ep)$ and $X := \mathbb{P}(E)$.
Let $\pi: X \rightarrow C$ be the ruling 
and $\mathcal{F}$ be a foliation induced by $\pi$.
By Corollary \ref{psef_ruled_surface}, $\mathcal{F}$ is nef if and only if $e=0$, and 
$\mathcal{F}$ is V-big if and only if $e>0$.
Therefore $\mathcal{F}$ can be nef or V-big regardless of the genus of $C$.
Notice that $\mathcal{F}$ cannot be strictly nef in this example.

\end{ex}

\subsection{Positive parts and algebraic parts of almost nef regular foliations}
\text{}

In this section, let $X$ be a smooth $n$-dimensional projective variety,  $\mathcal{F}$ be an almost nef regular foliation on $X$ with $c_1(\mathcal{F})\neq0$, and
$H_1, \ldots, H_{n-1}$ be ample divisors on $X$.
Set $\alpha := H_1 \cdots H_{n-1}$.
We study a relationship between a positive part of $\mathcal{F}$ and an algebraic part of $\mathcal{F}$.

\begin{lem}
\label{psefalgebraic}
An algebraic part $\mathcal{H}$ of $\mathcal{F}$ is an almost nef regular foliation with $K_{\mathcal{F}} \equiv K_{\mathcal{H}}$.
\end{lem}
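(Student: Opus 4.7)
The plan is to apply Theorem~\ref{psefregular} to reduce the analysis to the base, where $\mathcal{F}$ becomes the pullback of a numerically flat foliation. Concretely, I would write $\mathcal{F}=f^{-1}\mathcal{G}$ with $f:X\to Y$ smooth with rationally connected fibers and $\mathcal{G}$ a numerically flat regular foliation on $Y$, and then decompose $\mathcal{G}$ further via its own algebraic/transcendental parts: let $g:Y\dashrightarrow W$ be the rational map inducing the algebraic part $\mathcal{G}_{a}$ of $\mathcal{G}$, and let $\mathcal{G}_{t}$ be the (purely transcendental) transcendental part on $W$. Then $\mathcal{F}=(g\circ f)^{-1}\mathcal{G}_{t}$, and by construction the algebraic part of $\mathcal{F}$ is $\mathcal{H}=f^{-1}\mathcal{G}_{a}$, the foliation induced by the essentially equidimensional rational map $g\circ f$; in particular $T_{X/Y}\subseteq \mathcal{H}$.

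For $K_{\mathcal{F}}\equiv K_{\mathcal{H}}$, that is $c_{1}(\mathcal{F}/\mathcal{H})\equiv 0$, I would combine two complementary positivity statements. Since $\mathcal{H}$ is saturated in $\mathcal{F}$, the torsion-free quotient $\mathcal{F}/\mathcal{H}$ inherits almost nefness from $\mathcal{F}$ by Theorem~\ref{almost_nef}(1), so $c_{1}(\mathcal{F}/\mathcal{H})$ is pseudo-effective. On the other hand, Lemma~\ref{rationaldecent} supplies, on a big open subset, an injection $\mathcal{F}/\mathcal{H}\hookrightarrow ((g\circ f)^{*}\mathcal{G}_{t})^{\vee\vee}$ of torsion-free sheaves of the same rank, so $c_{1}(\mathcal{F}/\mathcal{H})=(g\circ f)^{*}c_{1}(\mathcal{G}_{t})-[D]$ for some effective divisor class $[D]$. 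Since $K_{\mathcal{G}_{t}}$ is pseudo-effective by Corollary~\ref{psef_transcendental}, both summands on the right are anti-pseudo-effective, whence $c_{1}(\mathcal{F}/\mathcal{H})$ is anti-pseudo-effective. Salience of the pseudo-effective cone then forces $c_{1}(\mathcal{F}/\mathcal{H})\equiv 0$.

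For regularity and almost nefness of $\mathcal{H}$, I would run the identical two-sided positivity argument on $Y$ to obtain $c_{1}(\mathcal{G}_{a})\equiv 0$. Since $\mathcal{G}_{a}$ is saturated in the numerically flat bundle $\mathcal{G}$, the Demailly--Peternell--Schneider filtration of $\mathcal{G}$ by hermitian flat subbundles \cite[Theorem~1.18]{DPS} together with Theorem~\ref{almost_nef}(3) shows that $\mathcal{G}_{a}$ is a numerically flat subbundle. The exact sequence $0\to T_{X/Y}\to \mathcal{H}\to f^{*}\mathcal{G}_{a}\to 0$ then exhibits $\mathcal{H}$ as an extension of vector bundles, hence a subbundle of $T_{X}$, and as an extension of the almost nef positive part $T_{X/Y}$ by the nef bundle $f^{*}\mathcal{G}_{a}$ it is almost nef (on curves outside the union of the two bad loci). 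The main obstacle will be to promote $\mathcal{G}_{a}$ from a reflexive saturated subsheaf with $c_{1}\equiv 0$ to an honest numerically flat subbundle, which relies on the structure theory of numerically flat bundles.
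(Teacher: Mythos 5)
Your route is genuinely different from the paper's, which never passes to the base $Y$: it works directly on $X$ with the sequence $0\to\mathcal{H}\to\mathcal{F}\to\mathcal{Q}\to 0$. For $K_{\mathcal{F}}\equiv K_{\mathcal{H}}$ the paper pairs pseudo-effectivity of $\det\mathcal{Q}$ (almost nefness of $\mathcal{F}$, Theorem~\ref{almost_nef}(1)) with Druel's identity $K_{\mathcal{H}}-K_{\mathcal{F}}=-(f^{*}K_{\mathcal{G}}+R)$, $R\ge 0$, and Corollary~\ref{psef_transcendental} --- the same two-sided mechanism you use, but avoiding the delicate interpretation of $(g\circ f)^{*}c_{1}(\mathcal{G}_{t})$ along a composite rational map. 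For regularity and almost nefness of $\mathcal{H}$, the paper dualizes to $0\to\mathcal{Q}^{\vee}\xrightarrow{\ \tau\ }\mathcal{F}^{\vee}\to\mathcal{K}\to 0$, observes $\mathcal{Q}^{\vee}$ is numerically flat by Theorem~\ref{almost_nef}(3), and then --- this is the crucial step --- shows $\mathcal{K}$ is a vector bundle by applying Theorem~\ref{almost_nef}(2) to the section $\wedge^{k}\tau$ of the dual of the almost nef bundle $\wedge^{k}\mathcal{F}\otimes\det\mathcal{Q}^{-1}$ ($k=\operatorname{rk}\mathcal{Q}$), so that $\mathcal{H}=\mathcal{K}^{\vee}$ is a subbundle. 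Your use of Theorem~\ref{psefregular} is logically permissible (its proof does not invoke this lemma), but it makes the argument longer and introduces the extra task of matching decompositions across $f$.

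The proof as written has a genuine gap exactly where the paper invokes Theorem~\ref{almost_nef}(2), and you flag it yourself at the end: you have $\mathcal{G}_{a}\subset\mathcal{G}$ saturated with $c_{1}(\mathcal{G}_{a})\equiv 0$ inside a numerically flat bundle, and you need $\mathcal{G}_{a}$ to be a \emph{subbundle}. The DPS filtration theorem \cite[Theorem 1.18]{DPS} produces \emph{one} filtration of $\mathcal{G}$ by subbundles with hermitian flat quotients; it does not say that every saturated subsheaf of $\mathcal{G}$ with $c_{1}\equiv 0$ is locally split, and Theorem~\ref{almost_nef}(3) only upgrades a reflexive sheaf to a numerically flat \emph{bundle}, not to a sub\emph{bundle} of $\mathcal{G}$. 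Locally the failure is possible (on $\mathbb{C}^{2}$, the image of $\mathcal{O}\xrightarrow{(x,y)}\mathcal{O}^{\oplus 2}$ is locally free, saturated, has $c_{1}=0$ and torsion-free quotient, yet is not a subbundle); it is precisely the global nonvanishing of $\wedge^{\operatorname{rk}(\mathcal{G}/\mathcal{G}_a)}$ of the dualized inclusion, guaranteed by Theorem~\ref{almost_nef}(2), that rules this out, and that step is missing. A secondary, lighter gap is the assertion "by construction" that the algebraic part of $\mathcal{F}$ equals $f^{-1}\mathcal{G}_{a}$: true, but it needs uniqueness of the algebraic/transcendental decomposition and the fact that precomposition with the smooth rationally connected fibration $f$ preserves the maximal algebraic piece, neither of which is immediate from Definition~\ref{alge_trans}. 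Since the paper's direct argument on $X$ sidesteps both issues, the cleanest fix is to drop the detour through $Y$ and run the $\wedge^{k}\tau$ argument on $\mathcal{Q}^{\vee}\hookrightarrow\mathcal{F}^{\vee}$ as the paper does.
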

\begin{proof}

Let $f : X \dashrightarrow Y$ be a dominant rational map induced by $\mathcal{H}$, and let $\mathcal{G}$ be a transcendental part of $\mathcal{F}$.
Since $\mathcal{F}$ is almost nef, 
$\det (\mathcal{F}/\mathcal{H})$ is pseudo-effective, hence
$(K_{\mathcal{H}} - K_{\mathcal{F}} )\alpha \ge 0$.
By the proof of \cite[Proposition 6.1]{Druel}, $K_{\mathcal{H}} - K_{\mathcal{F}} = -(f^{*}K_{\mathcal{G}} + R)$ for some effective divisor $R$ on $X$, hence $(K_{\mathcal{H}} - K_{\mathcal{F}} )\alpha \le 0$ by Corollary \ref{psef_transcendental} or \cite[Proposition 6.3]{Druel}.
Thus we obtain $K_{\mathcal{F}} \equiv K_{\mathcal{H}}$.

Set $\mathcal{Q} = \mathcal{F}/\mathcal{H}$, then there exists a coherent sheaf $\mathcal{K}$ such that
 \begin{equation*}
\xymatrix@C=25pt@R=20pt{
0\ar@{->}[r]&\mathcal{Q}^{\vee} \ar@{->}[r]^{\tau }
& \mathcal{F}^{\vee}  \ar@{->}[r] &\mathcal{K} \ar@{->}[r]&0. \\}
\end{equation*}
By Theorem \ref{almost_nef}, $\mathcal{Q}^{\vee}$ is numerically flat, $\mathcal{K}$ is a vector bundle,
and the above exact sequence is an exact sequence of vector bundles.
Since $\mathcal{H}$ is equal to $\mathcal{K}^{\vee}$ outside a codimension two Zariski closed subset, $\mathcal{H} = \mathcal{K}^{\vee}$ and $\mathcal{H} $ is an almost nef regular foliation.
\end{proof}

Let $\mathcal{H}$ be an algebraic part of $\mathcal{F}$ and
$f : X \dashrightarrow Y$ be a dominant rational map induced by $\mathcal{H}$.
We take a birational morphism $\pi: \widetilde{X} \rightarrow X$ and a surjective morphism $\widetilde{f} : \widetilde{X} \rightarrow Y$ with connected fibers such that $\tilde{f} = f \circ \pi$.
Now we consider a relative MRC fibration of $\widetilde{f} $.
By Lemma \ref{raynord_foliation}, 
we may assume that a relative MRC fibration $\widetilde{r} : \widetilde{X} \rightarrow \widetilde{Z}$ of $\widetilde{f} $ is a morphism, 
both $\widetilde{f}$-excptional divisors and  $\widetilde{r}$-excptional divisors are $\pi$-exceptional, 
and $\widetilde{X}, Y, \widetilde{Z} $ are smooth projective varieties. 
Set an induced morphism $\varphi : \widetilde{Z} \rightarrow Y$.

\begin{equation*}
\xymatrix@C=25pt@R=20pt{
\widetilde{X} \ar@{->}[d]_{\pi}  \ar@{->}[rd]^{\widetilde{f}} \ar@{->}[r]^{\widetilde{r}} & \widetilde{Z} \ar@{->}[d]^{\varphi} \\
X\ar@{-->}[r]_{f} & Y \\   
}
\end{equation*}

Let $\mathcal{R}$ denote the algebraically integrable foliation induced by a dominant  rational map $r:= \widetilde{r} \circ \pi^{-1} : X \dashrightarrow \widetilde{Z}$.

\begin{thm}
\label{posiotive_mrc}
Under the hypotheses stated above, $\mathcal{R}$ is a positive part of $\mathcal{F}$.
In particular, $\mathcal{R}$ is an almost nef regular algebraically  integrable foliation.
\end{thm}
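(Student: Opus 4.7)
The plan is to identify $\mathcal{R}$ with the positive part $G$ of $\mathcal{F}$ produced by Theorem~\ref{psefregular}, which realizes $G$ as $T_{X/Y_0}$ for a smooth morphism $f_0 : X \to Y_0$ with rationally connected fibers and yields a numerically flat regular foliation $\mathcal{G}_0$ on $Y_0$ with $\mathcal{F} = f_0^{-1}\mathcal{G}_0$. Once $\mathcal{R} = G$ is established, the ``in particular'' assertion is immediate from Theorem-Definition~\ref{Fujita_positive}.

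For the containment $G \subset \mathcal{R}$: since $G$ is algebraically integrable, the maximality built into the choice of $\widetilde{q}$ in Section~2.2 forces $G \subset \mathcal{H}$. Hence $f$ factors as $f = h \circ f_0$ for a morphism $h : Y_0 \to Y$, and pulling back by $\pi$ one obtains $\widetilde{f} = h \circ \widetilde{f_0}$ where $\widetilde{f_0} := f_0 \circ \pi$ has rationally connected fibers. The universal property of the relative MRC fibration then produces $s : Y_0 \to \widetilde{Z}$ with $\widetilde{r} = s \circ \widetilde{f_0}$ and $\varphi \circ s = h$, giving $T_{\widetilde{X}/Y_0} \subset T_{\widetilde{X}/\widetilde{Z}}$ on $\widetilde{X}$ and hence $G \subset \mathcal{R}$.

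For the reverse containment it suffices to show that $s$ is birational, equivalently that a general fiber of $h$ is not uniruled. Comparing $\mathcal{F} = f_0^{-1}\mathcal{G}_0$ with $\mathcal{F} = f^{-1}\mathcal{G}$, where $\mathcal{G}$ is the purely transcendental part, gives $\mathcal{G}_0 = h^{-1}\mathcal{G}$, so $h$ realizes the algebraic part of $\mathcal{G}_0$. Running the first half of the argument of Lemma~\ref{psefalgebraic} on the almost nef foliation $\mathcal{G}_0$ (the hypothesis $c_1 \neq 0$ is not used there, only the pseudo-effectivity of $\det(\mathcal{G}_0/\mathcal{H}_{\mathcal{G}_0})$ and Corollary~\ref{psef_transcendental}) yields $K_{\mathcal{H}_{\mathcal{G}_0}} \equiv K_{\mathcal{G}_0} \equiv 0$; restricting to a general $h$-fiber gives a smooth projective variety with numerically trivial canonical class, which is not uniruled by \cite{BDPP}. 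Therefore $\widetilde{f_0}|_{F_y} : F_y \to h^{-1}(y)$ already is the MRC of the $\widetilde{f}$-fiber $F_y$, forcing $s$ to be birational and $\mathcal{R} = G$.

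The principal obstacle is the containment $G \subset \mathcal{H}$: one cannot simply pass to a Lie-bracket closure since $G + \mathcal{H}$ need not be closed under bracket. The cleanest route is to argue by contradiction that, otherwise, one can assemble the leaves of $G$ and of $\mathcal{H}$ into a covering family of algebraic subvarieties tangent to $\mathcal{F}$ of dimension strictly greater than $\widetilde{q}$, contradicting the maximality used to construct the algebraic part. A secondary point is verifying that the $K_\mathcal{F} \equiv K_\mathcal{H}$ portion of Lemma~\ref{psefalgebraic} applies to $\mathcal{G}_0$ despite $c_1(\mathcal{G}_0) = 0$, which requires only inspecting that Corollary~\ref{psef_transcendental} and Theorem~\ref{almost_nef}(1) go through unchanged in that setting.
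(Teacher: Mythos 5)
Your proposal takes a genuinely different route from the paper. The paper works directly with $\mathcal{R}$: Step~1 establishes $K_{\mathcal{R}} \equiv K_{\mathcal{H}} \equiv K_{\mathcal{F}}$ via Proposition~\ref{raynord_foliation}, the ramification comparison of Lemma~\ref{ramification}, and the weak positivity of $K_{\widetilde{Z}/Y} - Ram(\varphi)$ for the relative MRC, whence $\mathcal{R}$ is an almost nef regular foliation by the argument of Lemma~\ref{psefalgebraic}; Step~2 shows $\mu_{\alpha}^{\min}(\mathcal{R})>0$ by contradiction: a proper positive part $\mathcal{R}' \subsetneq \mathcal{R}$ would give nested smooth morphisms $g' : X \to Z'$, $g : X \to Z$ and hence a nontrivial numerically flat subsheaf $g'^{*}\Omega_{Z'/Z}|_{X_z}$ of $\Omega_{X_z}$ for a rationally connected fiber $X_z$, which is impossible. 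You instead sandwich $\mathcal{R}$ against the positive part $G$ already constructed in Theorem~\ref{psefregular}: $G \subset \mathcal{H}$ gives the factorization $f = h \circ f_0$, the universal property of the relative MRC gives $s : Y_0 \dashrightarrow \widetilde{Z}$ and hence $G \subset \mathcal{R}$, and then showing the general $h$-fiber has numerically trivial canonical class (hence is not uniruled) forces $s$ to be birational, so $\mathcal{R} = G$. Both proofs ultimately hinge on a canonical-class identity coming from Lemma~\ref{psefalgebraic} combined with rationally connected geometry, but your argument has a more geometric flavor and reuses Theorem~\ref{psefregular} rather than redoing the slope computation; the paper's Step~2 contradiction is arguably shorter.

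One gap to flag: the containment $G \subset \mathcal{H}$, which you correctly identify as the principal obstacle, is not established by your proposed ``assemble the leaves of $G$ and $\mathcal{H}$ into a covering family'' argument — the union of a leaf of $\mathcal{H}$ with the $G$-leaves through it need not be tangent to $\mathcal{F}$, and $G + \mathcal{H}$ need not be closed under the Lie bracket, as you note. The clean fix uses pure transcendence directly: if a general leaf $L$ of $G$ were not contained in a fiber of $f$, then $f(\overline{L}^{zar})$ would be a positive-dimensional algebraic subvariety through a general point of $Y$, and since $T_L \subset \mathcal{F} = f^{-1}\mathcal{G}$ it would be tangent to $\mathcal{G}$, contradicting that $\mathcal{G}$ is purely transcendental. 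Smaller points that need spelling out but do go through: the cancellation $f_0^{-1}\mathcal{G}_0 = f_0^{-1}(h^{-1}\mathcal{G}) \Rightarrow \mathcal{G}_0 = h^{-1}\mathcal{G}$ for the smooth morphism $f_0$ with connected fibers; that Lemma~\ref{psefalgebraic}'s argument applies to $\mathcal{G}_0$ despite $c_1(\mathcal{G}_0)=0$ (you correctly observe it does, using Corollary~\ref{psef_transcendental} and Theorem~\ref{almost_nef}, and $\wedge^{k}\mathcal{G}_0 \otimes \det\mathcal{Q}^{\vee}$ is still numerically flat); and that a general leaf of $\mathcal{H}_{\mathcal{G}_0}$ is compact (via Lemma~\ref{almost_holomorphic} applied to the regular foliation $\mathcal{G}_0$), so that $K_{\mathcal{H}_{\mathcal{G}_0}}|_{F} = K_F \equiv 0$ makes sense and \cite{BDPP} applies.
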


In other words, if $\mathcal{F}$ is an almost nef regular foliation, then a positive part of $\mathcal{F}$ is a foliation induced by a relative MRC fibration of an algebraic part of $\mathcal{F}$.
In particular, the smooth morphism $f$ in Theorem \ref{psefregular} is a relative MRC fibration of an algebraic part of $\mathcal{F}$.

\begin{proof} The proof is divided into 2 steps.
\begin{step}
We show that $\mathcal{R}$ is an almost nef regular foliation and
$K_{\mathcal{R}} \equiv K_{\mathcal{H}} \equiv K_{\mathcal{F}} $.

By Proposition \ref{raynord_foliation}, there exist $\pi$-exceptional divisors $E_{\mathcal{H}},E_{\mathcal{R}}$ such that 
$$
\pi^{*}K_{\mathcal{H}} \sim_{\mathbb{Q}} K_{\widetilde{X}/Y} -Ram(\widetilde{f}) + E_{\mathcal{H}}
\,\,\,\,\, \text{and} \,\,\,\,\,
\pi^{*}K_{\mathcal{R}} \sim_{\mathbb{Q}} K_{\widetilde{X}/\widetilde{Z}} -Ram(\widetilde{r}) + E_{\mathcal{R}}.
$$
Thus we obtain
$$
\pi^{*}(K_{\mathcal{H}}-K_{\mathcal{R}} )\sim_{\mathbb{Q}} 
\widetilde{r}^{*}( K_{\widetilde{Z}/Y} - Ram(\varphi))
+( Ram(\widetilde{r})+\widetilde{r}^{*}Ram(\varphi) -Ram(\widetilde{f})) + (E_{\mathcal{H}} - E_{\mathcal{R}}).
$$
Since $\widetilde{r}$ is a relative MRC fibration, 
$K_{\widetilde{Z}/Y} - Ram(\varphi)$ is pseudo-effective
by the argument of \cite[Theoreme 1.20]{Clauden}.
Since both $\widetilde{f}$-excptional divisors and  $\widetilde{r}$-excptional divisors are $\pi$-exceptional, 
$$( Ram(\widetilde{r})+\widetilde{r}^{*}Ram(\varphi) -Ram(\widetilde{f}))\alpha \ge 0$$ by the argument of Lemma \ref{ramification}.
Hence $(K_{\mathcal{H}}-K_{\mathcal{R}})\alpha \ge 0$.
Since $ \mathcal{R} \subset \mathcal{H}$ and $\mathcal{H}$ is almost nef, 
$(K_{\mathcal{H}}-K_{\mathcal{R}})\alpha \le0$.
Therefore $K_{\mathcal{R}} \equiv K_{\mathcal{H}} \equiv K_{\mathcal{F}} $.
By the argument of Lemma \ref{psefalgebraic}, 
$\mathcal{R}$ is an almost nef regular foliation and $\mathcal{F}/\mathcal{R}$ is a numerically flat vector bundle.
\end{step}

\begin{step}

We show that $\mu_{\alpha}^{min}(\mathcal{R})>0$.

Since $\mathcal{R}$ is an almost nef regular foliation with rationally connected leaves, there exists a smooth morphism $g : X \rightarrow Z$ with $\mathcal{R} =T_{X/Z}$ by \cite[Corollary 2.11]{Hor07}.
To obtain a contradiction, suppose that $\mu_{\alpha}^{min}(\mathcal{R})=0$.

By Theorem-Definition \ref{Fujita_positive},
there exists an almost nef regular foliation $\mathcal{R}' \subset \mathcal{R}$ with  $\mu_{\alpha}^{min}(\mathcal{R}')>0$ and $K_{\mathcal{R}} \equiv K_{\mathcal{R}'} $.
By Theorem \ref{CP15} and \cite[Corollary 2.11]{Hor07}, 
we obtain a smooth morphism $g' : X \rightarrow Z'$ with rationally connected fibers with $\mathcal{R}' =T_{X/Z'}$.
Notice that $\mathcal{R}/\mathcal{R}'$ is numerically flat vector bundle and $\dim Z < \dim Z'$ from $\mathcal{R}' \subsetneq \mathcal{R}$.

\begin{equation*}
\xymatrix@C=25pt@R=20pt{
X \ar@{->}[rd]^{g} \ar@{->}[r]^{g'} \ar@{-->}[d]_{f} 
& Z' \\
Y& Z \\   
}
\end{equation*}

We show that there exists a smooth morphism $h : Z' \rightarrow Z$ such that $h \circ g' =g$.
From $\mathcal{R}' \subset \mathcal{R}$, any fiber of $g'$ is contained in a fiber of $g$.
Set 
$$\Gamma :=\{ (g(x) , g'(x)) \in Z \times Z'  : x \in X\}.$$
Since $g,g'$ are smooth, $\Gamma$ is a $\dim Z'$-dimensional submanifold of $ Z \times Z' $.
By \cite[Lemma 1.15]{Debbare}, $pr_{2}|_{\Gamma}: \Gamma \rightarrow Z'$ is isomorphism. Hence  $h := pr_1 \circ (pr_{2}|_{\Gamma})^{-1}$ is smooth. 

Therefore we obtain the following exact sequence
\begin{equation*}
\xymatrix@C=25pt@R=20pt{
0 \ar@{->}[r] & g'^{*} \Omega_{Z'/Z} \ar@{->}[r] &\mathcal{R}^{\vee}= \Omega_{X/Z}  \ar@{->}[r] & \mathcal{R}'^{\vee}=\Omega_{X/Z'} 
 \ar@{->}[r]&0. \\}
\end{equation*}
Set $X_z := g^{-1}(z)$ for a general point $z \in Z$.
Then we have 
$g'^{*} \Omega_{Z'/Z}|_{X_{z}} \subset \Omega_{X/Z}|_{X_{z}}$.
From $\dim Z < \dim Z'$, 
$ \Omega_{X_{z}}$ contains a non trivial numerically flat vector bundle, which is impossible since $X_z$ is rationally connected.
\end{step}
\end{proof}

\begin{cor}[{\textit{cf.} \cite[Theorem 1.1]{HIM}, \cite[Conjecture 4.12]{Pet1}}]
\label{psef_example}

Let $X$ be a smooth $n$-dimensional projective variety and 
$\mathcal{G}$ be a foliation on $X$.
Assume that $T_X$ is almost nef.
Then the followings are equivalent. 
\begin{enumerate}
\item $\mu_{\alpha}^{min}(\mathcal{G}) > 0$ and $K_{\mathcal{G}} \equiv K_X$, where $\alpha = H_1 \cdots H_{n-1}$ for some ample divisors $H_1, \ldots, H_{n-1}$ on $X$.
\item $\mathcal{G}$ is a positive part of $T_X$.
\item There exists a smooth MRC morphism $f:X \rightarrow Y$ such that $\mathcal{G} = T_{X/Y}$.
\end{enumerate}

Moreover, there exist a finite \'{e}tale morphism $\pi : \widetilde{X} \rightarrow X $ and a smooth surjective morphism $\widetilde{f} : \widetilde{X} \rightarrow A$ to an Abelian variety $A$ such that all fibers of $\widetilde{f}$ are rationally connected.

In particular, if $T_X$ is almost nef and $X$ is rationally connected, then $T_X$ is $(H_1, \ldots, H_{n-1})$-generically ample for any ample divisors $H_1, \ldots, H_{n-1}$ on $X$.

\end{cor}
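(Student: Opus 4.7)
The plan is to establish the equivalence of (1), (2), and (3) by first proving (2) $\Leftrightarrow$ (1) through the unique characterization of the positive part from Lemma \ref{Fujitafiltration} and Theorem-Definition \ref{Fujita_positive}, and then (2) $\Leftrightarrow$ (3) by invoking Theorem \ref{psefregular} together with the uniqueness of MRC fibrations among smooth morphisms. The ``Moreover'' statement will follow by base-changing the MRC fibration along the étale cover of $Y$ by an abelian variety produced by the Demailly-Peternell-Schneider structure theorem, and the ``In particular'' statement will reduce to the case where the MRC morphism is trivial.

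The direction (2) $\Rightarrow$ (1) is immediate: if $\mathcal{G} = G$ is the positive part, then $\mu_{\alpha}^{min}(G) > 0$ is built into the definition, and the exact sequence $0 \to G \to T_X \to Q \to 0$ with $c_1(Q) = 0$ gives $K_G \equiv K_X$. For (1) $\Rightarrow$ (2), I would compare $\mathcal{G}$ with the positive part $G$ of $T_X$. The composition $\mathcal{G} \hookrightarrow T_X \twoheadrightarrow Q$ must vanish, because $Q$ is $\alpha$-semistable of slope $0$ while $\mu_{\alpha}^{min}(\mathcal{G}) > 0$ prohibits any nonzero quotient of $\mathcal{G}$ from embedding into $Q$. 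Hence $\mathcal{G} \subset G$; saturatedness of $\mathcal{G}$ in $T_X$ descends to saturatedness in the subbundle $G$, so $G/\mathcal{G}$ is torsion-free. The hypothesis $K_{\mathcal{G}} \equiv K_X \equiv K_G$ then forces $c_1(G/\mathcal{G}) = 0$, and since $G/\mathcal{G}$ is a torsion-free quotient of $G$ with $\mu_{\alpha}^{min}(G) > 0$, its rank must vanish, whence $\mathcal{G} = G$.

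For (2) $\Rightarrow$ (3), I would apply Theorem \ref{psefregular} to $\mathcal{F} = T_X$ in the nontrivial case $c_1(T_X) \neq 0$ (the case where $T_X$ is numerically flat is handled by $Y = X$, $f = \mathrm{id}$, and $\mathcal{G} = 0$). The theorem yields a smooth surjective morphism $f : X \to Y$ with rationally connected fibers such that $\mathcal{G} = G = T_{X/Y}$, and the numerically flat regular foliation on $Y$ appearing in the conclusion must be $T_Y$ itself, hence $T_Y$ is numerically flat. Then the structure theorem of Demailly-Peternell-Schneider \cite{DPS} produces a finite étale cover $A \to Y$ by an abelian variety; in particular $Y$ is not uniruled, so $f$ is an MRC fibration. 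For (3) $\Rightarrow$ (2), let $G$ be the positive part of $T_X$; by the direction (2) $\Rightarrow$ (3) just established, $G = T_{X/Y'}$ for some smooth MRC morphism $f' : X \to Y'$. Uniqueness of MRC fibrations up to birational equivalence of the base forces the general fibers of $f$ and $f'$ to coincide in $X$, and since both are smooth equidimensional morphisms this lifts to the equality $T_{X/Y} = T_{X/Y'} = G$ of subsheaves of $T_X$.

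The ``Moreover'' assertion is then obtained by pulling $f$ back along $A \to Y$, yielding $\widetilde{f} : \widetilde{X} := X \times_Y A \to A$ with the same rationally connected fibers and $\pi : \widetilde{X} \to X$ finite étale. For the ``In particular'' statement, rational connectedness of $X$ forces the MRC morphism to be $X \to \mathrm{Spec}\,\mathbb{C}$, hence $\mathcal{G} = T_X$; by the final clause of Lemma \ref{Fujitafiltration} the positive part decomposition does not depend on the choice of ample polarization, so $\mu_{\alpha}^{min}(T_X) > 0$ for every $\alpha = H_1 \cdots H_{n-1}$ with $H_i$ ample, and Proposition \ref{MR} yields the desired generic ampleness. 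I expect the subtlest step to be the deduction of non-uniruledness of $Y$ in (2) $\Rightarrow$ (3), where the numerical flatness of $T_Y$ must be leveraged via the full strength of the Demailly-Peternell-Schneider structure theorem to land in the abelian variety case.
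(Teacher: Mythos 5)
Your proof is correct and follows essentially the same route as the paper's, which disposes of the three-way equivalence by appealing to Theorem \ref{posiotive_mrc} applied to the foliation $T_X$ (whose algebraic part is $T_X$ itself, so the ``relative MRC fibration of the algebraic part'' becomes the MRC fibration of $X$). What you do is unwind that citation: for (1) $\Leftrightarrow$ (2) you give the explicit slope comparison with the positive part $G$ of $T_X$ (vanishing of $\mathcal{G}\to Q$ via semistability of $Q$, then $K_{\mathcal{G}}\equiv K_G$ forcing $G/\mathcal{G}=0$), and for (2) $\Leftrightarrow$ (3) you re-derive the identification of the positive part with $T_{X/Y}$ from Theorem \ref{psefregular} together with the Demailly--Peternell--Schneider structure theorem for the numerically flat $T_Y$, rather than quoting Theorem \ref{posiotive_mrc} directly. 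Both are sound; your version is a bit more self-contained at the cost of repeating part of the argument already encapsulated in Theorem \ref{posiotive_mrc}. The handling of the ``Moreover'' (base change along $A\to Y$) and ``In particular'' clauses (identity MRC morphism, polarization-independence from the last sentence of Lemma \ref{Fujitafiltration}, then Proposition \ref{MR}) matches the paper exactly.
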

\begin{proof}
We regard $T_X$ as a foliation. Then the equivalence of the three conditions is clear by Theorem \ref{posiotive_mrc}.
Assume that $\mathcal{G}$ is the positive part of $T_X$.
By condition (3), we have the following exact sequence
 \begin{equation*}
\xymatrix@C=25pt@R=20pt{
0\ar@{->}[r]&\mathcal{G} = T_{X/Y} \ar@{->}[r]&T_X\ar@{->}[r]&f^{*}T_Y\ar@{->}[r]&0. \\
}
\end{equation*}
Then $f^{*}T_Y$ is numerically flat, hence $T_Y$ is so, and finally there exists a finite \'{e}tale morphism $\sigma : A \rightarrow Y$ from an Abelian variety $A$.
Set $\widetilde{X}:= X \times_{Y} A$, then the second statement holds.

if $T_X$ is almost nef and $X$ is rationally connected, then $T_X$ is the positive part of it, hence $\mu_{\alpha}^{min}(T_X) >0$ for $\alpha := H_1 \cdots H_{n-1}$.
By Proposition \ref{MR}, $T_X$ is $(H_1, \ldots, H_{n-1})$-generically ample.
\end{proof}

\subsection{Classifications of almost nef regular foliations on surfaces}\label{Classification}
\text{}

In this section, let $S$ be a smooth projective surface and $\mathcal{F}$ be an almost nef regular foliation on $S$.
For the classifications, we need the some of standard facts on divisors of ruled surfaces.
We summarize it in Appendix \ref {ruled_surface}.

\subsubsection{Case of ${\rm rk} \mathcal{F} = 1$ and $c_1(\mathcal{F}) \neq 0$.}
\begin{prop}
A tangent bundle $T_S$ contains a rank 1 almost nef regular foliation $\mathcal{F}$ with $c_1(\mathcal{F}) \neq 0$
if and only if $S$ is a ruled surface over a smooth curve $C$ and $\mathcal{F}= - K_{S/C}$.

\end{prop}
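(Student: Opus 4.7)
The plan is to deduce the forward direction directly from Theorem \ref{psefregular} by rank and dimension bookkeeping, and then to verify the three defining conditions for the converse using the appendix on ruled surfaces.

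For the forward direction, suppose $\mathcal{F}\subset T_S$ is a rank one almost nef regular foliation with $c_1(\mathcal{F})\neq 0$. Theorem \ref{psefregular} produces a smooth surjective morphism $f:S\to Y$ between smooth projective varieties whose fibers are rationally connected, a numerically flat regular foliation $\mathcal{G}\subset T_Y$ with $\mathcal{F}=f^{-1}\mathcal{G}$, and a short exact sequence
\begin{equation*}
0\to T_{S/Y}\to \mathcal{F}\to f^*\mathcal{G}\to 0.
\end{equation*}
Since $\dim S=2$, the value $\dim Y\in\{0,1,2\}$ must be examined. If $\dim Y=0$, then $T_{S/Y}=T_S$ has rank $2$ and the sequence forces $\mathrm{rk}\,\mathcal{F}\geq 2$, contradicting $\mathrm{rk}\,\mathcal{F}=1$. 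If $\dim Y=2$, the fibers of the smooth morphism $f$ are points, so $f$ is \'etale, $\mathcal{F}=f^*\mathcal{G}$ is numerically flat, and $c_1(\mathcal{F})=0$, again a contradiction. Hence $\dim Y=1$, so $\mathrm{rk}\,T_{S/Y}=1$ and $\mathcal{G}=0$; the sequence collapses to $\mathcal{F}=T_{S/Y}$. The fibers of $f$ are smooth rationally connected curves, hence isomorphic to $\mathbb{CP}^1$, so $f:S\to C:=Y$ is a $\mathbb{CP}^1$-bundle, $S$ is ruled over the smooth curve $C$, and $\mathcal{F}=T_{S/C}\cong \mathcal{O}_S(-K_{S/C})$ as a subbundle of $T_S$.

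For the converse, assume $S$ is a ruled surface with ruling $f:S\to C$ and set $\mathcal{F}:=T_{S/C}$. As the kernel of $df:T_S\to f^*T_C$, the sheaf $\mathcal{F}$ is a rank one subbundle of $T_S$ (automatically Lie-closed since it has rank one), i.e.\ a regular foliation. Its first Chern class equals $-K_{S/C}$, and on a fiber $F\cong \mathbb{CP}^1$ we have $\mathcal{F}|_F\cong \mathcal{O}_{\mathbb{CP}^1}(2)$, so $c_1(\mathcal{F})\cdot F=2\neq 0$ and in particular $c_1(\mathcal{F})\neq 0$. It remains to verify that $\mathcal{F}$ is almost nef. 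Since $\mathcal{F}$ is a line bundle on a smooth projective surface, almost nefness is equivalent to pseudo-effectivity of $-K_{S/C}$ by \cite{BDPP}; this is read off from the explicit divisor class computations in Appendix \ref{ruled_surface}. After normalizing $E$ with $S=\mathbb{P}(E)$ and letting $C_0$ be a minimal section with invariant $e$, one writes $-K_{S/C}=2C_0+eF$ (up to a numerically trivial twist by a pull-back from $C$), which is manifestly a non-negative combination of effective classes in the cases handled by Corollary \ref{psef_ruled_surface}; the closedness of the pseudo-effective cone covers the remaining limit case.

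The only non-formal step is the pseudo-effectivity statement in the converse, and this is the main obstacle to write out cleanly. Rank counting in the forward direction is combinatorial once Theorem \ref{psefregular} is in hand, but pseudo-effectivity of $-K_{S/C}$ must be checked for every ruled surface, not merely for Hirzebruch surfaces over $\mathbb{CP}^1$; the plan is to defer this to the normal-form analysis collected in Appendix \ref{ruled_surface}, where $-K_{S/C}$ is either directly effective or a nef limit of ample classes depending on the Hartshorne invariant $e$.
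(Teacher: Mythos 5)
Your proof is correct and follows the same route as the paper, which deduces the proposition directly from Theorem \ref{psefregular} (via the rank bookkeeping you spell out) and from Corollary \ref{psef_ruled_surface} (for pseudo-effectivity of $-K_{S/C}$, hence almost nefness of the line bundle $\mathcal{F}$). The only slight overcomplication is your final paragraph: Corollary \ref{psef_ruled_surface} already states outright that $-K_{S/C}$ is pseudo-effective for every $e$, so no separate limit argument with the closure of the pseudo-effective cone is needed.
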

\begin{proof}
It follows by Theorem \ref{psefregular} and Corollary \ref{psef_ruled_surface}.
\end{proof}

\subsubsection{Case of ${\rm rk} \mathcal{F} = 1$ and $c_1(\mathcal{F}) = 0$.}
\text{}

By \cite[Th\'eor\`eme 1.2]{Tou08} and \cite[Lemma 5.9]{Druel16I}, 
we have the following theorem. 
\begin{thm}[{\cite[Th\'eor\`eme 1.2]{Tou08}, \cite[Chapter 1]{Druel16}, \cite[Lemma 5.9]{Druel16I}, \cite[Section 1.1.3]{PT13}}]
\label{vanishing_first}
Let $X$ be a smooth projective variety and $\mathcal{G}$ be a regular codimension 1 foliation on $X$ with $c_1(\mathcal{G})=0$.
Then one of the following holds.
\begin{enumerate}
\item $X$ is a $\mathbb{CP}^1$-bundle over a smooth projective
variety $Y$ with $K_Y \equiv 0$ and $\mathcal{G}$ is everywhere transverse to the fiber of this $\mathbb{CP}^1$-bundle.
\item There exist an Abelian variety $A$, a simply connected smooth projective variety $Y$ with $K_Y \equiv 0$, and a finite \'etale cover $\pi: A \times Y \rightarrow X$, such that $\pi^{-1}\mathcal{G}$ is a pullback of a codimension 1 linear foliation on $A$.
\item There exist a smooth projective curve $B$ of genus at least 2, a smooth projective variety $Y$ with $K_Y \equiv 0$, and a finite \'etale cover $\pi: B \times Y \rightarrow X$, such that $\pi^{-1}\mathcal{G}$ is induced by the projection $B\times Y \rightarrow B$.
\end{enumerate}
\end{thm}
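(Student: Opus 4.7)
The plan is to decompose $\mathcal{G}$ into its algebraic and transcendental parts and invoke classification theorems for foliations and for varieties with $K \equiv 0$ along each factor. First I would record the normal bundle sequence
\[ 0 \to \mathcal{G} \to T_X \to N_{\mathcal{G}} \to 0, \]
from which the hypothesis $c_1(\mathcal{G}) = 0$ yields $N_{\mathcal{G}} \equiv -K_X$, tying the foliation directly to the canonical class of $X$.

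Next, form the algebraic part $\mathcal{H} \subseteq \mathcal{G}$ (Definition~\ref{alge_trans}) together with the induced dominant rational map $f : X \dashrightarrow Y$ to a purely transcendental quotient. By Lemma~\ref{rationaldecent}, $\mathcal{G}$ is the pull-back of a purely transcendental foliation $\overline{\mathcal{G}}$ on $Y$. Applying Corollary~\ref{psef_transcendental} gives $K_{\overline{\mathcal{G}}}$ pseudo-effective; together with $c_1(\mathcal{G}) = 0$ and a direct canonical-class comparison across $f$, this forces $K_{\overline{\mathcal{G}}} \equiv 0$ and $K_{\mathcal{H}} \equiv 0$ numerically.

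I would then split by the rank of $\overline{\mathcal{G}}$. If $\mathcal{G}$ is purely transcendental, invoke Touzet's theorem on regular codimension-$1$ foliations with numerically trivial canonical class to land in case (1) or case (2). If instead $\mathcal{G}$ is algebraically integrable, then after the birational/flattening modification of Proposition~\ref{raynord_foliation} one gets a fibration whose base is a curve, and the genus of this base governs which of (1)--(3) occurs: higher genus produces case (3), while genus $\leq 1$ combined with the Beauville--Bogomolov decomposition applied on a general leaf and an \'etale trivialization delivers case (1) or case (2). The mixed case reduces to one of the above via the canonical factorization through $f$.

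The main obstacle is the purely transcendental case. Establishing that a regular codimension-$1$ transcendental foliation with $K_{\mathcal{G}} \equiv 0$ admits the rigid transverse Euclidean or projective structure underlying cases (1) and (2) is Touzet's theorem, whose proof rests on delicate analysis of harmonic currents along the foliation and a Levi-type extension argument; bypassing it would seem to require substantially new input. A secondary technical difficulty is ensuring that the \'etale covers produced by Beauville--Bogomolov are compatible with $\mathcal{G}$, so that the product structures stated in (2) and (3) descend to the given foliation rather than only to a numerical equivalent.
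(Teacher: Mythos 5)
The paper provides no proof of this statement at all: it introduces the theorem with the single sentence ``By \cite[Th\'eor\`eme 1.2]{Tou08} and \cite[Lemma 5.9]{Druel16I}, we have the following theorem,'' so the ``paper's proof'' consists of nothing more than the citations appearing in the theorem header. There is therefore no proof in the paper to compare your outline against line by line; what can be checked is whether your outline would actually reproduce the result from those cited ingredients.

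Your outline correctly identifies Touzet's Th\'eor\`eme~1.2 and a Beauville--Bogomolov type decomposition (Druel's Lemma~5.9) as the irreducible black boxes, and you are honest that you cannot bypass Touzet. That matches the paper's attribution. However, the preliminary reduction through the algebraic/transcendental decomposition (Definition~\ref{alge_trans}, Lemma~\ref{rationaldecent}) is a detour that does not actually simplify the hard part and introduces several gaps. First, Touzet's Th\'eor\`eme~1.2 applies directly to the regular codimension-one foliation $\mathcal{G}$ on $X$ and already delivers the trichotomy ``fibration / closed $1$-form with flat coefficients / Riccati,'' which then maps onto cases (3), (2), (1) after étale base change and the decomposition of the $K \equiv 0$ factor. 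No prior splitting into algebraic and transcendental parts is needed, nor does it occur in the cited sources. Second, if you do perform the split, the transcendental quotient $\overline{\mathcal{G}}$ lives on a normal variety $Y$ that need not be smooth and $\overline{\mathcal{G}}$ need not be regular, so Touzet's theorem (which requires a regular foliation on a compact K\"ahler manifold) does not apply to $\overline{\mathcal{G}}$ as written. Third, the canonical-class comparison you invoke to conclude $K_{\overline{\mathcal{G}}} \equiv 0$ from $c_1(\mathcal{G}) = 0$ is not immediate: the relation $K_{\mathcal{G}} - K_{\mathcal{H}} = f^{*}K_{\overline{\mathcal{G}}} + R$ (Example~\ref{ramification_algebraic}, \cite[Proposition 6.1]{Druel}) involves a ramification/effective correction term $R$, and $K_{\mathcal{H}}$ is generally anti-pseudo-effective, so you cannot directly cancel to get $K_{\overline{\mathcal{G}}} \equiv 0$ without additional hypotheses such as almost nefness of $\mathcal{G}$ (which this theorem does not assume --- compare Lemma~\ref{psefalgebraic}, which does use almost nefness). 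Finally, your ``mixed case reduces to one of the above via the canonical factorization through $f$'' is not an argument; in the direct approach via Touzet this case simply does not arise as a separate item. So the honest summary is: your identification of the external inputs is right and matches the paper's citations, but the proposed preliminary reductions are a dead end, and the correct route is to apply Touzet's theorem to $\mathcal{G}$ itself and then sort the resulting three Touzet cases into the three items of the statement using the Beauville--Bogomolov/Druel decomposition after a suitable finite \'etale cover.
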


\begin{prop}
If ${\rm rk} \mathcal{F} =1$ and $c_1(\mathcal{F}) =0$, then one of the followings holds.
\begin{enumerate}
\item $S$ is a ruled surface over an elliptic curve $C$ and $\mathcal{F}$ is everywhere transverse to the fiber.
In particular, $\mathcal{F}$ is a Riccati foliation and induced by the following group representation
$$
\rho: \pi_1(C) = \mathbb{Z}^2 \rightarrow Aut(\mathbb{CP}^1)= PGL(2,\mathbb{C}).
$$
\item Up to a finite \'etale cover, $S$ is an Abelian variety and $\mathcal{F}$ is a linear foliation on $S$.
\item Up to a finite \'etale cover, $S \cong B \times C$ for some smooth projective curve $B$ of genus at least 2 and some elliptic curve $C$, and $\mathcal{F}$ is induced by the projection $B\times C \rightarrow B$.
\end{enumerate}
\end{prop}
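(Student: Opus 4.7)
The plan is to invoke Theorem \ref{vanishing_first} applied to $\mathcal{F}$, which is legitimate because $\mathcal{F}$ is a regular codimension $1$ foliation on $S$ (rank $1$ on a surface is codimension $1$) with $c_{1}(\mathcal{F}) = 0$ by hypothesis. This yields three structural alternatives, and I would refine each of them by a dimension count using $\dim S = 2$.

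In alternative (1) of Theorem \ref{vanishing_first}, $S$ is a $\mathbb{CP}^{1}$-bundle $\pi \colon S \to Y$ with $K_{Y} \equiv 0$ and $\mathcal{F}$ everywhere transverse to the fibers. Since $\dim Y = 1$, the base $Y = C$ is a smooth projective curve with $K_{C} \equiv 0$, hence elliptic. The transversality means $\mathcal{F}$ defines a flat holomorphic Ehresmann connection on $\pi$, and its holonomy yields a representation
\[
\rho \colon \pi_{1}(C) = \mathbb{Z}^{2} \longrightarrow \mathrm{Aut}(\mathbb{CP}^{1}) = PGL(2, \mathbb{C}),
\]
realizing $\mathcal{F}$ as the associated Riccati foliation, which is alternative (1) of the proposition.

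For alternative (2) of the theorem, we obtain a finite \'etale cover $\pi \colon A \times Y \to S$ with $A$ an abelian variety, $Y$ simply connected and $K_{Y} \equiv 0$, and $\pi^{-1}\mathcal{F}$ pulled back from a codimension $1$ linear foliation on $A$. Since $\dim A + \dim Y = 2$ and the linear foliation must have rank $1$, we need $\dim A \geq 1$; and $\dim A = 1$ would force $Y$ to be a simply connected curve with trivial canonical, which does not exist (the only curve with $K \equiv 0$ is elliptic). Hence $\dim A = 2$ and $Y$ is a point, giving alternative (2) of the proposition. Finally, alternative (3) of the theorem produces a finite \'etale cover $B \times Y \to S$ with $B$ of genus $\geq 2$, $\dim Y = 1$, and $K_{Y} \equiv 0$, so $Y$ is elliptic, yielding alternative (3) of the proposition.

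The step I expect to require the most care is the Ehresmann/holonomy identification in case (1): writing out why a rank $1$ regular subfoliation of $T_{S}$ everywhere transverse to the fibers of a $\mathbb{CP}^{1}$-bundle integrates globally to a flat connection and thus to a representation of the fundamental group. This is standard for Riccati foliations, but should be recorded cleanly when turning the sketch into a full proof; the remaining dimension bookkeeping is essentially formal.
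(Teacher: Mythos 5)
Your argument is correct and is essentially the paper's argument, only spelled out. The paper's proof is a one-line reference to Theorem~\ref{vanishing_first} and to Brunella \cite[Chapter 4.1]{Bru} for the Riccati-foliation dictionary; you are filling in exactly those two ingredients, applying the classification to $\mathcal{F}$ as a regular codimension-one foliation with $c_{1}=0$, then doing the dimension bookkeeping (in case (1), $\dim Y=1$ and $K_{Y}\equiv 0$ forces $Y$ elliptic; in case (2), $\dim A=1$ would force $Y$ to be a simply connected curve with $K_{Y}\equiv 0$, which is impossible, so $\dim A=2$ and $Y$ is a point; in case (3), $Y$ is again an elliptic curve), and quoting Brunella for the holonomy representation realizing $\mathcal{F}$ as a Riccati foliation. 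Note that the almost-nef hypothesis on $\mathcal{F}$ from the section's standing assumptions plays no role once $c_{1}(\mathcal{F})=0$ is given, and indeed neither you nor the paper uses it here.
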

\begin{proof}
It follows by Theorem \ref{vanishing_first} and \cite[Chapter 4.1]{Bru}.
\end{proof}

\subsubsection{Case of ${\rm rk} \mathcal{F} = 2$}
\text{}

In this case, $\mathcal{F} = T_S$.
Even if $T_S$ is pseudo-effective, the classification of the surface $S$ is not completed.
For example, we do not know whether a tangent bundle of a blow-up of a Hirzebruch surface along the general four points is pseudo-effective (see also \cite[Chapter 4]{HIM}).
If $S$ is a minimal surface, we can classify $S$ as below.

\begin{prop}[{\textit{cf.} \cite[Proposition 6.19]{DPS01}}]
Let $S$ be a minimal projective surface.
$T_S$ is almost nef if and only if $S$ belongs to the following list:

\begin{enumerate}
\item Up to a finite \'etale cover, $S$ is an Abelian variety.
\item $S$ is a minimal ruled surface over either $\mathbb{CP}^1$ or an elliptic curve. 
\item $S$ is isomorphic to $\mathbb{CP}^2$.
\end{enumerate}
\end{prop}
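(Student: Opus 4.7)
The plan is to combine the Enriques--Kodaira classification of minimal projective surfaces with the structure theorem (Theorem \ref{psefregular}) for the forward implication, and to verify each class on the list directly for the converse. The starting observation is that if $T_S$ is almost nef, then $\det T_S = -K_S$ is almost nef as a line bundle, hence pseudo-effective by Remark \ref{rem-psef}.

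To rule out the surfaces not on the list, I would sort by Kodaira dimension. If $\kappa(S) = 2$ then $K_S$ is big, incompatible with $-K_S$ pseudo-effective. If $\kappa(S) = 1$ then both $K_S$ and $-K_S$ are pseudo-effective, and intersecting with any ample class forces $K_S \equiv 0$, contradicting $\kappa(S) = 1$. If $\kappa(S) = 0$ then $K_S \equiv 0$, so $c_1(T_S) = 0$, and Theorem \ref{almost_nef}(3) shows $T_S$ is numerically flat; consequently $c_2(T_S) = e(S) = 0$, which among $\kappa = 0$ minimal surfaces rules out K3 ($e = 24$) and Enriques ($e = 12$), leaving only Abelian and bielliptic surfaces, both covered by case (1). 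If $\kappa(S) = -\infty$, then either $S \cong \mathbb{CP}^2$ or $S$ is a ruled surface; in the latter, $-K_S \cdot F = 2$ on a fiber $F$, so $c_1(T_S) \neq 0$.

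For the ruled case I would apply Theorem \ref{psefregular} to $\mathcal{F} = T_S$, obtaining a smooth morphism $f : S \to Y$ with rationally connected fibers such that $T_S = f^{-1}\mathcal{G}$ for a numerically flat regular foliation $\mathcal{G}$ on $Y$. The case $\dim Y = 2$ would force $c_1(T_S) = 0$, already handled. If $\dim Y = 0$, then $S$ is rationally connected and minimal, so $S \cong \mathbb{CP}^2$ or a Hirzebruch surface $\mathbb{F}_n$ with $n \neq 1$, which lie on the list. If $\dim Y = 1$, the fibers are $\mathbb{CP}^1$, so $f$ realizes $S$ as a $\mathbb{CP}^1$-bundle; moreover $T_Y \cong \mathcal{G}$ is numerically flat, forcing $\deg T_Y = 0$ and $g(Y) = 1$, yielding a ruled surface over an elliptic curve.

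For the converse, case (1) follows since the pullback of $T_S$ under the étale cover by an Abelian variety is trivial, hence $T_S$ is numerically flat; case (3) is immediate because $T_{\mathbb{CP}^2}$ is ample. The main technical obstacle is case (2): verifying almost-nefness of $T_S$ for every minimal ruled surface $\pi : S \to C$ with $g(C) \leq 1$. Here I would exploit the relative Euler sequence $0 \to T_{S/C} \to T_S \to \pi^* T_C \to 0$, noting that $T_{S/C} \cong -K_{S/C}$ is almost nef by Corollary \ref{psef_ruled_surface}, and that $\pi^* T_C$ is trivial when $g(C) = 1$ and of the form $\pi^*\mathcal{O}_{\mathbb{CP}^1}(2)$ when $g(C) = 0$. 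Using the intersection computations from Appendix \ref{ruled_surface}, one checks that for any irreducible curve $C' \neq C_0$ on $\mathbb{F}_n$ both the subbundle $T_{S/C}|_{C'}$ and the quotient $\pi^* T_C|_{C'}$ have non-negative degree, so $T_S|_{C'}$ is nef; the only exceptional curve is the unique negative section $C_0$ of $\mathbb{F}_n$ with $n \geq 2$, which forms a countable (indeed finite) exceptional family, confirming almost nefness. This curve-by-curve restriction analysis, rather than any deep structural input, is the essential technical point.
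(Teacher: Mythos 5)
Your proof is correct, and the overall scaffolding is the same as the paper's (minimal surface classification, the relative Euler sequence for ruled surfaces, numerical flatness when $c_1(T_S)=0$), but you take noticeably heavier routes in two places. First, for $\kappa(S)\geq 0$ you sort by Kodaira dimension and appeal to $c_2(T_S)=e(S)=0$ to eliminate K3 and Enriques; the paper instead uses the \cite{KM98} trichotomy ($K_S$ nef / ruled / $\mathbb{CP}^2$), notes that $K_S$ nef plus $-K_S$ pseudo-effective forces $K_S\equiv 0$, applies Theorem \ref{almost_nef}(3) to get numerical flatness, and then cites Yau's theorem once --- no Euler characteristic computation. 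Second, and more substantially, for the ruled case the paper derives \emph{both} implications simultaneously from the single claim ``$T_S$ almost nef $\iff \pi^*T_C$ almost nef'': the quotient direction is Theorem \ref{almost_nef}(1), and the sub-direction is that $T_{S/C}=-K_{S/C}$ is \emph{always} pseudo-effective (hence almost nef) by Corollary \ref{psef_ruled_surface}, so the middle term of the Euler sequence restricts to a nef bundle on a generic curve. You instead invoke the full structure theorem (Theorem \ref{psefregular}) for the forward direction and run a curve-by-curve intersection-number check on $\mathbb{F}_n$ for the converse. Both work, but Theorem \ref{psefregular} is far more machinery than needed (the quotient observation suffices), and your explicit $\mathbb{F}_n$ analysis is essentially re-proving, by hand, the specialization of Corollary \ref{psef_ruled_surface} plus nefness of extensions. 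Your sketch of the $g(C)=1$ converse case is also left implicit; it follows identically, but you should say so rather than only working out $\mathbb{F}_n$. In short: same ingredients and conclusion, but the paper packages the ruled case into one biconditional and avoids the structure theorem entirely, while you redistribute the work into a Kodaira-dimension case split and a hands-on restriction argument.
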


\begin{proof}

By \cite[Theorem 1.29]{KM98}, $S$ satisfies exactly one of the following conditions:

\hspace{-12pt}(Case 1). $K_S$ is nef. 
 
 \hspace{-12pt}(Case 2). $S$ is a minimal ruled surface over a curve. 
 
 \hspace{-12pt}(Case 3). $S \cong \mathbb{CP}^2$.
 
We classify $S$ in case 1 and 2.

\hspace{-12pt}(Case 1). 
Since $T_S$ is almost nef, by Theorem \ref{almost_nef}, $T_S$ is numerically flat.
By Yau's Theorem, $S$ is an Abelian variety up to a finite \'etale cover.

\hspace{-12pt}(Case 2). 
Let $\pi: S  \rightarrow C$ be a ruled surface over a curve $C$.
Then we have 
 \begin{equation*}
\xymatrix@C=25pt@R=20pt{
0\ar@{->}[r]&T_{S/C} = -K_{S/C} \ar@{->}[r]&T_{S} \ar@{->}[r]&\pi^{*}T_{C}\ar@{->}[r]&0. \\
}
\end{equation*}
By \cite[Proposition 3.8]{LOY20} and Corollary \ref{psef_ruled_surface}, $T_S$ is almost nef if and only if $\pi^{*}T_{C}$ is almost nef.
Hence $T_S$ is almost nef if and only if $C$ is $\mathbb{CP}^1$ or an elliptic curve.
\end{proof}

\section{Foliations with nef anti-canonical bundles}
In this chapter, let $X$ be a smooth $n$-dimensional projective variety.
We study a foliation with a nef anti-canonical bundle and give an answer to Question \ref{druel_conjecture}. For the proof, we use the following theorem.

\begin{thm}\cite[Claim 4.3]{Dsemi}
\label{Druel_nef}
Let $\mathcal{F}$ be a foliation on $X$.
Assume that $\mathcal{F}$ is regular or $\mathcal{F}$ has a compact leaf.
If $-K_{\mathcal{F}}$ is nef, then $\mathcal{F}$ contains an algebraically integrable foliation $\mathcal{G}$ with rationally connected leaves such that $\mathcal{G}$ has a compact leaf and $K_{\mathcal{F} }\equiv K_{\mathcal{G}}$.

\end{thm}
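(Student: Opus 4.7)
The plan is to construct $\mathcal{G}$ in two stages: first pass from $\mathcal{F}$ to its algebraic part, then refine that by a relative MRC fibration to obtain rationally connected leaves, all while preserving the numerical class of the canonical divisor.

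Stage one. Let $\mathcal{H}$ be the algebraic part of $\mathcal{F}$ from Definition \ref{alge_trans}, coming from an essentially equidimensional dominant rational map $f:X\dashrightarrow Y$ and a purely transcendental foliation $\mathcal{T}$ on $Y$ with $\mathcal{F}=f^{-1}\mathcal{T}$. Lemma \ref{almost_holomorphic} ensures that $\mathcal{H}$ has a compact leaf. Pass to a birational model $\pi:\widetilde{X}\to X$ with a morphism $\widetilde{f}:\widetilde{X}\to Y$ as supplied by Proposition \ref{raynord_foliation}. Mirroring the computation in the proof of Lemma \ref{psefalgebraic}, one obtains
\[
\pi^{*}(K_{\mathcal{H}}-K_{\mathcal{F}})\sim_{\mathbb{Q}}-\widetilde{f}^{*}K_{\mathcal{T}}-R+(\pi\text{-exceptional}),\qquad R\ge 0.
\]
Because $\mathcal{T}$ is purely transcendental, Corollary \ref{psef_transcendental} makes $K_{\mathcal{T}}$ pseudo-effective, hence $K_{\mathcal{F}}-K_{\mathcal{H}}$ is pseudo-effective on $X$.

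Stage two. Set up a further birational refinement, exactly as in the paragraph preceding Theorem \ref{posiotive_mrc}, so that a relative MRC fibration $\widetilde{r}:\widetilde{X}\to\widetilde{Z}$ over $Y$ is a morphism whose exceptional divisors are $\pi$-exceptional, with induced $\varphi:\widetilde{Z}\to Y$. Let $\mathcal{G}\subset\mathcal{H}\subset\mathcal{F}$ be the algebraically integrable foliation on $X$ induced by $\widetilde{r}\circ\pi^{-1}$: its general leaf is the rationally connected quotient of a general leaf of $\mathcal{H}$, and the rationally connected part inside a compact leaf of $\mathcal{H}$ gives $\mathcal{G}$ a compact leaf. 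Combining Proposition \ref{raynord_foliation} applied to $\widetilde{r}$, the ramification comparison of Lemma \ref{ramification}, and the pseudo-effectivity of $K_{\widetilde{Z}/Y}-\mathrm{Ram}(\varphi)$ for a relative MRC base (the Graber--Harris--Starr ingredient invoked in Step 1 of the proof of Theorem \ref{posiotive_mrc}), one deduces that $\pi^{*}(K_{\mathcal{H}}-K_{\mathcal{G}})$ is pseudo-effective. Chaining with stage one, $K_{\mathcal{F}}-K_{\mathcal{G}}$ is pseudo-effective on $X$.

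Closing step, which is the main obstacle. Since $\mathcal{G}\subset\mathcal{F}$ with $K_{\mathcal{F}}-K_{\mathcal{G}}=c_{1}\bigl((\mathcal{F}/\mathcal{G})^{\vee}\bigr)$, the nefness of $-K_{\mathcal{F}}$ must be used to pinch the pseudo-effective class $K_{\mathcal{F}}-K_{\mathcal{G}}$ down to zero numerically. I would intersect against a movable class $\alpha$ coming from free rational curves sweeping out general rationally connected leaves $L$ of $\mathcal{G}$: on such $L$, $-K_{\mathcal{F}}|_{L}$ is nef, and a slope computation along $\alpha$ in the spirit of Proposition \ref{MR} forces $(K_{\mathcal{F}}-K_{\mathcal{G}})\cdot\alpha\le 0$. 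Together with pseudo-effectivity this yields $K_{\mathcal{F}}\equiv K_{\mathcal{G}}$. The delicate point is producing enough such movable classes to detect numerical triviality and simultaneously controlling the contributions from $R$ and from $\mathrm{Ram}(\widetilde{r})+\widetilde{r}^{*}\mathrm{Ram}(\varphi)-\mathrm{Ram}(\widetilde{f})$ of Lemma \ref{ramification}; balancing the transcendental positivity from Corollary \ref{psef_transcendental} against the relative MRC positivity under the single hypothesis that $-K_{\mathcal{F}}$ is nef is what drives the final step.
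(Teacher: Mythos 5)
A preliminary remark: the paper does not prove Theorem \ref{Druel_nef} but cites it from \cite[Claim 4.3]{Dsemi}, so there is no in-paper proof to compare against; I therefore assess your sketch against the paper's closely related arguments (Lemma \ref{psefalgebraic} and Theorem \ref{posiotive_mrc}). Your stages one and two faithfully reproduce their mechanism: pass to the algebraic part $\mathcal{H}$, refine by the relative MRC to obtain $\mathcal{G}$, and combine the pseudo-effectivity of $K_{\mathcal{T}}$ for the transcendental part (Corollary \ref{psef_transcendental}), the weak positivity of the relative MRC base, and the ramification comparison (Lemma \ref{ramification}) to conclude that $K_{\mathcal{F}} - K_{\mathcal{G}}$ is pseudo-effective. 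That much is sound, as is the observation that a pseudo-effective divisor meeting $H_1\cdots H_{n-1}$ in zero is numerically trivial.

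The gap is exactly the one you flag: you never establish $(K_{\mathcal{F}} - K_{\mathcal{G}})\cdot\alpha \le 0$. In the paper's analogues this reverse inequality is where the hypothesis on $\mathcal{F}$ enters, and it enters through almost nefness: in Lemma \ref{psefalgebraic} the quotient $\mathcal{F}/\mathcal{H}$ of the almost nef $\mathcal{F}$ is again almost nef, so $\det(\mathcal{F}/\mathcal{H}) = K_{\mathcal{H}}-K_{\mathcal{F}}$ is pseudo-effective; Step 1 of Theorem \ref{posiotive_mrc} uses the same device with $\mathcal{H}/\mathcal{R}$. Here you assume only that $-K_{\mathcal{F}}$ is nef, which does not make $\mathcal{F}$, let alone $\mathcal{F}/\mathcal{G}$, almost nef, so that device is unavailable. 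Your substitute --- intersecting with movable classes of free rational curves in leaves and running ``a slope computation in the spirit of Proposition \ref{MR}'' --- does not deliver the inequality: for such a curve $C$ in a leaf $L$ of $\mathcal{G}$ one has $(K_{\mathcal{F}} - K_{\mathcal{G}})\cdot C = K_{\mathcal{F}}\cdot C - K_L\cdot C$, and $K_{\mathcal{F}}\cdot C \le 0$ together with $K_L\cdot C < 0$ leaves the sign uncontrolled; moreover, curves that are free in a leaf need not produce classes in the interior of the movable cone of $X$. Converting the nefness of $-K_{\mathcal{F}}$ into a numerical constraint forcing $K_{\mathcal{F}} \equiv K_{\mathcal{G}}$ is precisely the content of Druel's argument in \cite{Dsemi}, and it is the step your proposal leaves open.
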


The following theorem is a consequence of \cite[Corollary 4.5]{Druel}.

\begin{thm}\cite[Corollary 4.5]{Druel}
\label{weakly_positivity}
Let $f:  Z \rightarrow Y$ be an equidimensional dominant morphism with connected fibers between normal $\mathbb{Q}$-factorial projective varieties, $\Delta$ be an effective $\mathbb{Q}$-divisor on $Z$, $L$ be a nef $\mathbb{Q}$-divisor on $Z$, and $F$ be a general fiber of $f$.
Assume that $(K_{Z/Y}+\Delta+L)|_{F}$ is pseudo-effective, $(F, \Delta |_{F})$ is lc, and $K_{Z} + \Delta +L$ is $\mathbb{Q}$-Cartier in a neighborhood of $F$.
Then $K_{Z/Y}+\Delta +L -Ram(f)$ is pseudo-effective.
\end{thm}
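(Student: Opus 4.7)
The plan is to deduce this from Druel's weak positivity result \cite[Corollary 4.5]{Druel}, which (in essence) gives weak positivity of a direct image $f_{\ast}\mathcal{O}_{Z}(m(K_{Z/Y}+\Delta+L))$ on $Y$ for some sufficiently divisible $m$, under exactly the fiberwise hypotheses being assumed here (pseudo-effectivity of $(K_{Z/Y}+\Delta+L)|_{F}$ together with the lc condition on $(F,\Delta|_{F})$). The whole task is then to translate weak positivity on the base into pseudo-effectivity on the total space, with the ramification divisor appearing as the unavoidable correction when pulling back.

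First I would perform the standard reduction. By the flattening theorem and by taking log-resolutions, I may assume that $Z$ and $Y$ are smooth and that $f$ is an equidimensional algebraic fiber space, at the cost of adding $\pi$-exceptional divisors (which are harmless for testing pseudo-effectivity in the style of Lemma \ref{ramification} and Proposition \ref{raynord_foliation}). I would also ensure, after shrinking $Y$ and modifying, that $K_{Z}+\Delta+L$ is $\mathbb{Q}$-Cartier globally, that $(F,\Delta|_{F})$ remains lc, and that the nefness of $L$ is preserved.

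Second, I would apply \cite[Corollary 4.5]{Druel} in this reduced setting. This produces a positive integer $m$ and a torsion-free coherent sheaf
\[
\mathcal{E}:=\bigl(f_{\ast}\mathcal{O}_{Z}(m(K_{Z/Y}+\Delta+L))\bigr)^{\vee\vee}
\]
on $Y$ which is weakly positive (in the sense of Nakayama/Viehweg). Since weak positivity implies pseudo-effectivity of $\det \mathcal{E}$, and since taking a symmetric power and tensoring with an ample line bundle yields a generically generated sheaf, one obtains a natural generically surjective morphism $f^{\ast}\mathcal{E} \to \mathcal{O}_{Z}(m(K_{Z/Y}+\Delta+L))$ twisted so as to cancel the generic fiber evaluation. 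The key point is that this evaluation map is constructed from the relative trace, and its failure to be an isomorphism along prime divisors in $Y$ is accounted for precisely by the classical formula for the ramification divisor: pulling back a weakly positive sheaf from $Y$ under the equidimensional $f$ yields pseudo-effectivity only after subtracting $\mathrm{Ram}(f)$. Combining the weak positivity of $\mathcal{E}$ with this evaluation then gives the pseudo-effectivity of $m(K_{Z/Y}+\Delta+L-\mathrm{Ram}(f))$, hence of $K_{Z/Y}+\Delta+L-\mathrm{Ram}(f)$ itself.

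The main obstacle, as I see it, is the careful bookkeeping in the second step: ensuring that Druel's weak positivity theorem applies in the non-smooth, non-flat setting given in the hypothesis, and that the correction term obtained when pulling back from $Y$ to $Z$ is exactly $\mathrm{Ram}(f)$ and not something larger. This is where the equidimensionality of $f$ and the combinatorial identity used in Lemma \ref{ramification} are essential; without equidimensionality, additional exceptional contributions would appear and the clean ramification formula would be lost.
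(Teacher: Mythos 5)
The paper does not prove this theorem; it simply cites \cite[Corollary~4.5]{Druel} and notes that the stated formulation follows by the identity $K_{\mathcal{F}} \sim_{\mathbb{Q}} K_{Z/Y} - \mathrm{Ram}(f)$ of Example~\ref{ramification_algebraic}, where $\mathcal{F}$ is the foliation induced by $f$. Your proposal is therefore, at bottom, circular: you invoke Druel's Corollary~4.5 as the key ingredient in a proof of a statement which \emph{is} (a reformulation of) Druel's Corollary~4.5. You also mischaracterize that corollary: it is not stated as weak positivity of the direct image $\bigl(f_*\mathcal{O}_Z(m(K_{Z/Y}+\Delta+L))\bigr)^{\vee\vee}$, but rather directly as pseudo-effectivity of $K_{\mathcal{F}}+\Delta+L$ (equivalently of $K_{Z/Y}+\Delta+L-\mathrm{Ram}(f)$).

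Even granting your reading of the citation, the proposed deduction from weak positivity of the direct image would not close the gap. If one has an evaluation map $f^{*}\mathcal{E} \to \mathcal{O}_{Z}\bigl(m(K_{Z/Y}+\Delta+L)\bigr)$ that is injective generically, weak positivity of $\mathcal{E}$ on the base plus effectivity of the cokernel pushes in the wrong direction: it yields pseudo-effectivity of $m(K_{Z/Y}+\Delta+L)$ itself, which is a \emph{weaker} conclusion than pseudo-effectivity of $m(K_{Z/Y}+\Delta+L)-m\,\mathrm{Ram}(f)$. The ramification term does not appear as the discrepancy in an evaluation map; it is built into the statement from the start because the natural object here is $K_{\mathcal{F}}$, not $K_{Z/Y}$, and the identification $K_{\mathcal{F}} = K_{Z/Y} - \mathrm{Ram}(f)$ is precisely what translates the foliation-theoretic positivity into your formulation. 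So your ``bookkeeping'' step would need to produce a negative contribution of size $\mathrm{Ram}(f)$ from the evaluation map, and no mechanism for that is given; it cannot come for free from the equidimensionality of $f$.

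If you want to give a genuine proof of the theorem rather than cite it, you would need to reproduce the content of Druel's Section~4, which proceeds via weak positivity in the spirit of Campana--P\u{a}un/Viehweg applied to the foliated setting (see the discussion around Theorem~\ref{CP15} in this paper and \cite{CP15}), not a formal translation from base to total space. Otherwise the correct move is simply to cite the result and to verify that the hypotheses here match those of \cite[Corollary~4.5]{Druel} after the substitution $K_{\mathcal{F}} \rightsquigarrow K_{Z/Y}-\mathrm{Ram}(f)$.
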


Let $\nu(X, L)$ denote the numerical dimension of $L$ for any nef $\mathbb{Q}$-divisor $L$ on $X$.
About the numerical dimension of a nef anti-canonical bundle of an algebraically integrable foliation, 
we have the following proposition.

\begin{prop}[{\textit{cf.} \cite[Theorem 3.9]{EIM}}]
\label{eim}
Let $\mathcal{F} $ be an algebraically integrable foliation on $X$. Assume that $\mathcal{F}$ is regular or $\mathcal{F} $ has a compact leaf.
If $-K_{\mathcal{F}}$ is nef, then
$\nu(F,-K_F) = \nu(X, -K_{\mathcal{F}})$ for a general leaf $F$ of $\mathcal{F}$.
\end{prop}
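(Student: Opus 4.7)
The plan is to compare $\nu(X,-K_{\mathcal{F}})$ and $\nu(F,-K_F)$ after replacing $X$ by an equidimensional model of $\mathcal{F}$, and then to prove the two inequalities separately, with the nontrivial direction relying on Druel's weak positivity.

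First I reduce to a morphism setup. By Proposition~\ref{raynord_foliation}, choose a birational morphism $\pi: \widetilde{X} \to X$ and a surjective morphism with connected fibers $\widetilde{f}: \widetilde{X} \to Y$, which we may take equidimensional via Raynaud's flattening, such that $\pi^* K_{\mathcal{F}} \sim_{\mathbb{Q}} K_{\widetilde{X}/Y} - \mathrm{Ram}(\widetilde{f}) + E$ for a $\pi$-exceptional divisor $E$. Set $\widetilde{L} := -\pi^* K_{\mathcal{F}}$, a nef $\mathbb{Q}$-divisor. Since $\pi$ is birational, $\nu(\widetilde{X}, \widetilde{L}) = \nu(X,-K_{\mathcal{F}})$; and since a general fiber $\widetilde{F}$ of $\widetilde{f}$ maps birationally via $\pi$ to a general leaf $F$ of $\mathcal{F}$, with $K_{\mathcal{F}}|_F = K_F$, we get $\nu(\widetilde{F}, \widetilde{L}|_{\widetilde{F}}) = \nu(F,-K_F)$. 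It therefore suffices to show $\nu(\widetilde{X}, \widetilde{L}) = \nu(\widetilde{F}, \widetilde{L}|_{\widetilde{F}})$.

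Next I prove the easy inequality $\nu(\widetilde{X}, \widetilde{L}) \geq d$, where $d := \nu(\widetilde{F}, \widetilde{L}|_{\widetilde{F}})$. Pick an ample $H$ on $\widetilde{X}$ with $(\widetilde{L}|_{\widetilde{F}})^d \cdot (H|_{\widetilde{F}})^{r-d} > 0$, where $r = \dim \widetilde{F}$. Equidimensionality of $\widetilde{f}$ over smooth $Y$ gives flatness, so the fiber class satisfies $[\widetilde{F}] \sim_{\mathrm{num}} c\,(\widetilde{f}^* H_Y)^{\dim Y}$ for any very ample $H_Y$ on $Y$ and some $c > 0$. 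Hence $\widetilde{L}^d \cdot H^{r-d} \cdot (\widetilde{f}^* H_Y)^{\dim Y} > 0$ on $\widetilde{X}$. Choosing $H$ sufficiently ample that $H - \widetilde{f}^* H_Y$ is ample and expanding $H^{\dim Y}$ binomially as a sum of nef classes, all extra cross-terms contribute nonnegatively when intersected with the nef class $\widetilde{L}^d \cdot H^{r-d}$, so $\widetilde{L}^d \cdot H^{n-d} > 0$, giving $\nu(\widetilde{X}, \widetilde{L}) \geq d$.

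The reverse inequality $\nu(\widetilde{X}, \widetilde{L}) \leq d$ is the main obstacle. Suppose for contradiction that $\widetilde{L}^{d+1} \not\equiv 0$ on $\widetilde{X}$, whereas $(\widetilde{L}|_{\widetilde{F}})^{d+1} \equiv 0$ on $\widetilde{F}$. I would apply Druel's weak positivity (Theorem~\ref{weakly_positivity}) to $\widetilde{f}$ with $\Delta = 0$ and auxiliary nef divisor $L_{\mathrm{aux}} := (d+1)\widetilde{L} + \varepsilon A$ for a small $\varepsilon > 0$ and an ample $A$: a direct computation shows that $(K_{\widetilde{X}/Y} + L_{\mathrm{aux}})|_{\widetilde{F}}$ is numerically $-d K_{\widetilde{F}} + \varepsilon A|_{\widetilde{F}}$ up to a $\pi|_{\widetilde{F}}$-exceptional correction, which is nef and hence pseudo-effective. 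The theorem yields pseudo-effectivity of $K_{\widetilde{X}/Y} + L_{\mathrm{aux}} - \mathrm{Ram}(\widetilde{f}) \sim_{\mathbb{Q}} d\widetilde{L} + \varepsilon A - E$ on $\widetilde{X}$. Intersecting this pseudo-effective divisor against the movable 1-cycle class $\widetilde{L}^{d+1} \cdot H^{n-d-2}$ for ample $H$, using $(\widetilde{L}|_{\widetilde{F}})^{d+1} \equiv 0$ to suppress fiber-direction contributions, and letting $\varepsilon \to 0$, I expect to force $\widetilde{L}^{d+1} \equiv 0$, contradicting the assumption. The delicate point, following \cite[Theorem 3.9]{EIM}, is controlling the $\pi$-exceptional divisor $E$ and translating pseudo-effectivity of a divisor into vanishing of a higher-codimensional cycle class via Druel's theorem.
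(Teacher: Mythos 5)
Your proposal identifies the right key ingredient (Druel's weak positivity, Theorem~\ref{weakly_positivity}) and correctly carries out the easy inequality, but the hard direction $\nu(X,-K_{\mathcal{F}})\le\nu(F,-K_F)$ has a genuine gap that you yourself acknowledge. You reduce to showing that pseudo-effectivity of $d\widetilde{L}-E+\varepsilon A$ forces $\widetilde{L}^{d+1}\cdot H^{n-d-1}=0$, but you never close this: intersecting the pseudo-effective divisor against the (nef, hence movable) curve class $\widetilde{L}^{d+1}\cdot H^{n-d-2}$ only yields $d\,\widetilde{L}^{d+2}\cdot H^{n-d-2}\ge E\cdot\widetilde{L}^{d+1}\cdot H^{n-d-2}$ after letting $\varepsilon\to 0$, which says nothing about $\widetilde{L}^{d+1}\cdot H^{n-d-1}$ unless you can control the $E$-term and the ``fiber-direction'' decomposition you gesture at; neither is carried out. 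Moreover, the control of the correction divisor $E$ is not merely delicate --- it is the crux, and the paper handles it in an essentially different way.

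The paper's proof takes a different route. It works on the \emph{equidimensional} Chow-variety model $f:Z\to Y$ from Lemma~\ref{algebraic} (not the smooth-but-possibly-non-equidimensional model of Proposition~\ref{raynord_foliation}), blows up along a general fiber $H_0=f^{-1}(y_0)$ and the point $y_0$ to get a cartesian square, and invokes Wang's \cite[Proposition B.20]{Wang} to write ${\pi^\flat}^*K_{\mathcal{F}}\sim_{\mathbb{Q}}K_{Z^\flat/Y^\flat}-\mathrm{Ram}(f^\flat)+\Delta^\flat$ with $\Delta^\flat$ \emph{effective}. Theorem~\ref{weakly_positivity} then gives that $-\Delta^\flat$ is pseudo-effective, whence $\Delta^\flat=0$ --- this vanishing is exactly what replaces your uncontrolled $E$. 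With the correction gone, a second application of Theorem~\ref{weakly_positivity} yields a uniform bound $C$ on $\mathrm{mult}_{F_0}(\Gamma)$ for all effective $\Gamma\equiv A-mK_{\mathcal{F}}$ (Claim~\ref{ejiri}); the proof then concludes not by intersection theory but by a section-counting argument, filtering $\mathcal{O}_X(A-mK_{\mathcal{F}})$ by the powers $\mathcal{I}_{F_0}^k$, $k\le\lfloor C\rfloor+1$, to bound $h^0(X,A-mK_{\mathcal{F}})$ by a fixed number of $h^0$'s on $F_0$, each growing like $m^{\nu(F_0,-K_{F_0})}$. This filtration-and-counting mechanism is the piece your sketch is missing: you would need some concrete replacement for it, and as written there is no path from the pseudo-effectivity statement you derive to the vanishing of the higher-degree numerical class.
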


\begin{proof}
The basic idea is the same as \cite[Theorem 3.9]{EIM} and \cite[Theorem 4.2]{EG19}.
Notice that a general leaf $F$ of $\mathcal{F}$ is a smooth projective variety.
From $-K_{\mathcal{F}} |_{F} = -K_F$, we have 
$\nu(F,-K_F) \le \nu(X, -K_{\mathcal{F}})$.
We will show that $\nu(F,-K_F) \ge \nu(X, -K_{\mathcal{F}})$.

By applying Lemma \ref{algebraic} to the foliation $\mathcal{F}$,
there is an equidimensonal dominant morphism $f : Z \rightarrow Y$ with connected fibers between normal projective varieties and a birational morphism $\pi : Z \rightarrow X$.
Set $g:= f \circ \pi^{-1}: X \dashrightarrow Y$.
Since $\mathcal{F}$ is algebraically integrable, $g$ is almost holomorphic.
Hence we can take a regular point $y_0 \in Y \setminus f(Exc(\pi) \cup Ram(f))$. 
Set $F_0 := g^{-1} (y_0)$. Let $A$ be an ample divisor on $X$.
\begin{claim}
\label{ejiri}
There exists a constant $C>0$ such that $\text{mult}_{F_0}(\Gamma) \le C$ for any $m \in \mathbb{N}_{>0}$ and 
 any effective divisor $\Gamma$ with $\Gamma \equiv A-mK_{\mathcal{F}}$.
\end{claim}
\begin{proof}
Set $H_0 := f^{-1} (y_0)$.
Let $\sigma :  Z^{\flat} \rightarrow Z$ 
be the blow-up of $Z$ along $H_0$, $\rho:  Y^{\flat} \rightarrow Y$ be the blow up of $Y$ at $y_0$, 
and 
$H^{\flat}$ (resp. $G^{\flat}$) be the exceptional divisor of $\sigma$ (resp. $\rho$).
Let $f^{\flat} : Z^{\flat} \rightarrow Y^{\flat}$ be an induced morphism. Set $\pi^{\flat} := \pi \circ \sigma$.
(If ${ \rm codim} F_0 =1$, we assume that both $\sigma$ and $\rho$ are identity maps, $H^{\flat} = H_0$, and $G^{\flat} = y_0$.)
We now have the following commutative diagram:
\begin{equation*}
\xymatrix@C=25pt@R=20pt{
&Z^{\flat}\ar@{->}[d]^{\sigma }\ar@{->}[r]^{f^{\flat}}\ar@{->}[ld]_{\pi^{\flat} }& Y^{\flat} \ar@{->}[d]^{\rho }\\   
X \ar@/_15pt/@{-->}[rr]_{g}& Z \ar@{->}[r]^{f}\ar@{->}[l]_{\pi}&Y\\
}
\end{equation*}
Since $f$ is flat at every point in $H_0$, the square in the above diagram is cartesian. 
By \cite[Proposition B.20]{Wang}, there exists a $\pi^{\flat}$-exceptional effective $\mathbb{Q}$-divisor $\Delta^{\flat}$ on $Z^{\flat}$ such that 
$$
{\pi^{\flat}}^{*}K_{\mathcal{F}} \sim_{\mathbb{Q}} K_{Z^{\flat}/Y^{\flat}} -Ram(f^{\flat}) + \Delta^{\flat}.
$$
Since $-\Delta^{\flat} \sim_{\mathbb{Q}} 
 K_{Z^{\flat}/Y^{\flat}} + {\pi^{\flat}}^{*}(-K_{\mathcal{F}})-Ram(f^{\flat}) $ and 
$-K_{\mathcal{F}} $ is nef, $-\Delta^{\flat}$ is pseudo-effective by Theorem \ref{weakly_positivity}, and finally $\Delta^{\flat}=0$.

Notice that $\text{mult}_{F_0}(\Gamma) 
= \text{mult}_{H^{\flat}}({\pi^{\flat}}^{*}\Gamma)
= \text{mult}_{{f^{\flat}}^{*} G^{\flat} }({\pi^{\flat}}^{*}\Gamma)$.
Write $\alpha := \text{mult}_{{f^{\flat}}^{*} G^{\flat} }({\pi^{\flat}}^{*}\Gamma)$.
Then  ${\pi^{\flat}}^{*}\Gamma$ can be written as  ${\pi^{\flat}}^{*}\Gamma \equiv \alpha {f^{\flat}}^{*} G^{\flat} + E$ for some effective divisor $E$ with
$\text{mult}_{{f^{\flat}}^{*} G^{\flat} }E =0$.
We take  $p \gg 0$ such that $(Z^{\flat}_{y},\frac{E}{m+p}|_{Z^{\flat}_{y}})$ is lc for a general point $y \in Y^{\flat}$. 
From
$$
(m+p)K_{Z^{\flat} / Y^{\flat}} + E -p{\pi^{\flat}}^{*}K_{\mathcal{F}}
\equiv 
{\pi^{\flat}}^{*}A - \alpha {f^{\flat}}^{*} G^{\flat} + (m+p)Ram(f^{\flat}), 
$$
${\pi^{\flat}}^{*}A - \alpha {f^{\flat}}^{*} G^{\flat}$ is pseudo-effective by Theorem \ref{weakly_positivity}.
Let $A^{\flat}$ be an ample divisor on $Z^{\flat}$ and
$C :=\frac{({\pi^{\flat}}^{*}A){A^{\flat}}^{n-1}}{ ({f^{\flat}}^{*} G^{\flat}){A^{\flat}}^{n-1}}.$
Then we obtain $\alpha \le C$. 
\end{proof}

For any $k \in \mathbb{N}_{>0}$, we have 
 \begin{equation*}
\xymatrix@C=25pt@R=20pt{
0\ar@{->}[r]&\mathcal{I}_{F_0}^{k}  \ar@{->}[r]&\mathcal{I}_{F_0}^{k-1}  \ar@{->}[r]&\mathcal{I}_{F_0}^{k-1} /\mathcal{I}_{F_0}^{k}\ar@{->}[r]&0 .\\
}
\end{equation*}
By Claim \ref{ejiri} and the exact sequence above,
$$
h^{0}(X,A-mK_{\mathcal{F}} ) 
\le  \sum_{ 1 \le k \le \lfloor C \rfloor + 1} h^0(F_0, (A|_{F_0}-mK_{F_0} ) \otimes \mathcal{I}_{F_0}^{k-1} /\mathcal{I}_{F_0}^{k})
$$
for any $m \in \mathbb{N}_{>0}$.
Since $ \mathcal{I}_{F_0}^{k-1} /\mathcal{I}_{F_0}^{k}$ is a torsion-free $\mathcal{O}_{F_0}$-module, 
$ \mathcal{I}_{F_0}^{k-1} /\mathcal{I}_{F_0}^{k}$ is isomorphic to a subsheaf of the direct sum of $A|_{F_0}$.
Set $\nu := \nu(F_0,-K_{F_0})$, then 
there exists a constant $C'$ such that  
$$h^{0}(X,A-mK_{\mathcal{F}} ) / m^{\nu}
\le  \sum_{ 1 \le k \le \lfloor C \rfloor + 1} h^0(F_0, (A|_{F_0}-mK_{F_0} ) \otimes \mathcal{I}_{F_0}^{k-1} /\mathcal{I}_{F_0}^{k} )/ m^{\nu}
< C'
$$
for any $m \in \mathbb{N}_{>0}$, and finally $\nu(F,-K_F) =  \nu \ge \nu(X, -K_{\mathcal{F}})$.
\end{proof}

\begin{thm}[{\textit{cf.} \cite[Question 7.4]{Druel}}]
\label{rcfoliation}
Let $\mathcal{F} $ be a foliation on $X$. 
Assume that $\mathcal{F} $ is regular or $\mathcal{F} $ has a compact leaf. 
If $-K_{\mathcal{F}}$ is nef, then $\kappa(X, -K_{\mathcal{F}}) \le { \rm rk} \mathcal{F} $.
When the equality holds, the followings hold.
\begin{enumerate}
\item $\mathcal{F}$ is an algebraically integrable foliation with rationally connected leaves.
\item $-K_{F}$ is semiample and big for a general leaf $F$ of $\mathcal{F}$.
\item $-K_{\mathcal{F}}$ is semiample and abundant.
\end{enumerate}
\end{thm}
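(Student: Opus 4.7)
The plan is to compare $\kappa(X,-K_{\mathcal{F}})$ with the rank of $\mathcal{F}$ by passing to a rationally connected algebraically integrable subfoliation, chaining Theorem \ref{Druel_nef} with Proposition \ref{eim}.

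First, I would invoke Theorem \ref{Druel_nef} to produce an algebraically integrable foliation $\mathcal{G}\subset\mathcal{F}$ with rationally connected leaves, a compact leaf, and $K_{\mathcal{F}}\equiv K_{\mathcal{G}}$. Since the numerical dimension is a numerical invariant of nef $\mathbb{Q}$-divisors, one has $\nu(X,-K_{\mathcal{F}})=\nu(X,-K_{\mathcal{G}})$. Applying Proposition \ref{eim} to $\mathcal{G}$ then gives $\nu(G,-K_G)=\nu(X,-K_{\mathcal{G}})$ for a general leaf $G$ of $\mathcal{G}$, and since $G$ is a smooth projective variety of dimension $\mathrm{rk}\,\mathcal{G}\le\mathrm{rk}\,\mathcal{F}$, the chain
\[
\kappa(X,-K_{\mathcal{F}})\le\nu(X,-K_{\mathcal{F}})=\nu(G,-K_G)\le\dim G=\mathrm{rk}\,\mathcal{G}\le\mathrm{rk}\,\mathcal{F}
\]
proves the main inequality.

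Now suppose equality holds. Then every inequality above is an equality. From $\mathrm{rk}\,\mathcal{G}=\mathrm{rk}\,\mathcal{F}$ combined with the inclusion $\mathcal{G}\subset\mathcal{F}$ I conclude $\mathcal{G}=\mathcal{F}$, which is assertion (1). From $\nu(G,-K_G)=\dim G$ together with the nefness of $-K_G$ I obtain that $-K_G$ is big; since $G$ is smooth and $-K_G$ is nef and big, the Kawamata--Shokurov base point free theorem produces the semiampleness required in (2). The remaining equality $\kappa(X,-K_{\mathcal{F}})=\nu(X,-K_{\mathcal{F}})$ is by definition the abundance of $-K_{\mathcal{F}}$, so half of (3) is already in hand.

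The main obstacle is promoting abundance of $-K_{\mathcal{F}}$ to actual semiampleness. To handle this I would pass to a birational model via Proposition \ref{raynord_foliation}, obtaining $\pi:\widetilde{X}\to X$ and an equidimensional morphism $\widetilde{f}:\widetilde{X}\to Y$ with $\pi^{*}K_{\mathcal{F}}\sim_{\mathbb{Q}}K_{\widetilde{X}/Y}-\mathrm{Ram}(\widetilde{f})+E$, where $E$ is $\pi$-exceptional. A general fiber $F$ of $\widetilde{f}$ is a general leaf of $\mathcal{F}$, so $-K_F$ is already semiample by (2), giving relative semiampleness of $-\pi^{*}K_{\mathcal{F}}$ over $Y$ up to the effective correction terms. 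Since $-K_{\mathcal{F}}$ is abundant with Iitaka dimension $\mathrm{rk}\,\mathcal{F}=\dim F$ and its restriction to a general leaf is big, the Iitaka fibration of $-K_{\mathcal{F}}$ must coincide birationally with the foliation fibration $X\dashrightarrow Y$, so $-K_{\mathcal{F}}$ is essentially pulled back from a big class on $Y$ and its semiampleness can be assembled from the fiberwise semiampleness combined with the weak positivity theorem (Theorem \ref{weakly_positivity}), in the spirit of \cite[Theorem 8.8]{Druel}. Carefully controlling the ramification divisor $\mathrm{Ram}(\widetilde{f})$ and the exceptional correction $E$ is where I expect the technical work to concentrate.
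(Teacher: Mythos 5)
Your proposal matches the paper's argument step for step in everything except the last point. The main inequality is proved exactly as the paper does: Theorem \ref{Druel_nef} produces the algebraically integrable foliation $\mathcal{G}\subset\mathcal{F}$ with rationally connected leaves, a compact leaf, and $K_{\mathcal{F}}\equiv K_{\mathcal{G}}$; Proposition \ref{eim} gives $\nu(G,-K_G)=\nu(X,-K_{\mathcal{G}})$; and the chain $\kappa(X,-K_{\mathcal{F}})\le\nu(X,-K_{\mathcal{F}})=\nu(G,-K_G)\le\mathrm{rk}\,\mathcal{G}\le\mathrm{rk}\,\mathcal{F}$ is the same. Your deductions in the equality case for (1), for bigness and semiampleness of $-K_F$ on the general leaf, and for abundance of $-K_{\mathcal{F}}$ (the identity $\kappa=\nu$) are also correct and agree with the paper.

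The divergence is in the final semiampleness of $-K_{\mathcal{F}}$. You sketch a from-scratch argument through Proposition \ref{raynord_foliation}, identification with the Iitaka fibration, fiberwise semiampleness, and the weak positivity theorem, and you yourself flag that controlling $\mathrm{Ram}(\widetilde{f})$ and the exceptional corrections is where the technical work would concentrate. As written this is a gap: the steps ``the Iitaka fibration must coincide birationally with the foliation fibration'' and ``semiampleness can be assembled from the fiberwise semiampleness'' are not argued, and the second in particular is well known to be subtle (fiberwise semiampleness does not in general globalize without substantial input). The paper avoids all of this by citing \cite[Corollary 8.4]{Druel}, which directly gives semiampleness of $-K_{\mathcal{F}}$ once one knows it is nef and abundant and $\mathcal{F}$ is algebraically integrable with rationally connected leaves and a compact leaf — precisely the situation already established in the equality case. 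So your outline captures the right ingredients (it is morally what goes into Druel's proof), but the clean route is to invoke that corollary rather than reconstruct it.
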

\begin{proof}
 By Theorem \ref{Druel_nef}, there exists an algebraically integrable foliation with rationally connected leaves $\mathcal{G} \subset \mathcal{F}$ with $K_{\mathcal{F}} \equiv K_{\mathcal{G}}$.
By Proposition \ref{eim}, 
for a general leaf $G$ of $\mathcal{G} $, 
$$
\kappa(X, -K_{\mathcal{F}}) \le \nu(X, -K_{\mathcal{F}})
 = \nu(X, -K_{\mathcal{G}})
  = \nu(G, -K_G)
  \le {\rm rk} \mathcal{G}
 \le {\rm rk} \mathcal{F}.
$$
When the equality holds, $-K_{\mathcal{F}}$ is nef and abundant, $ \mathcal{F} = \mathcal{G}$, and $\nu(X, -K_{\mathcal{F}}) = \nu(F, -K_F) =\dim F$ for a general leaf $F$ of $\mathcal{F}$.
By \cite[Corollary 8.4]{Druel},  
$-K_{\mathcal{F}}$ is semiample.
\end{proof}

\begin{cor}
Let $\mathcal{F} $ be a regular foliation on $X$.
If $-K_{\mathcal{F}}$ is nef and $\kappa(X, -K_{\mathcal{F}}) = {\rm rk} \mathcal{F} $, then there exists a smooth morphism $f:X \rightarrow Y$ such that $\mathcal{F}  = T_{X/Y}$.
\end{cor}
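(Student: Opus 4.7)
The plan is to directly combine Theorem \ref{rcfoliation} with the regularity hypothesis and the descent-to-a-smooth-morphism result of H\"oring that was already invoked in the proof of Theorem \ref{psefregular}.

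First, I would apply Theorem \ref{rcfoliation} to $\mathcal{F}$. By hypothesis $-K_{\mathcal{F}}$ is nef, $\mathcal{F}$ is regular, and $\kappa(X,-K_{\mathcal{F}}) = {\rm rk}\,\mathcal{F}$, so we are in the equality case. Conclusion (1) of that theorem then gives that $\mathcal{F}$ is algebraically integrable with rationally connected leaves. Combined with the standing assumption that $\mathcal{F}$ is a regular foliation (i.e.\ a subbundle of $T_X$), we obtain a regular, algebraically integrable foliation on $X$ all of whose leaves are rationally connected.

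Next, I would invoke \cite[Corollary 2.11]{Hor07}, exactly as in the proof of Theorem \ref{psefregular}. That result asserts that a regular algebraically integrable foliation with rationally connected leaves on a smooth projective variety is induced by a smooth surjective morphism to another smooth projective variety. Applying it here produces a smooth morphism $f : X \rightarrow Y$ between smooth projective varieties such that $\mathcal{F} = T_{X/Y}$, which is exactly the conclusion.

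There is essentially no obstacle beyond correctly citing the inputs: the nontrivial content (algebraic integrability, rational connectedness of leaves, and abundance of $-K_{\mathcal{F}}$) has already been extracted in Theorem \ref{rcfoliation}, and the passage from an algebraically integrable regular foliation with rationally connected leaves to a smooth morphism is H\"oring's theorem. The only point worth double-checking is that the hypothesis ``$\mathcal{F}$ is regular'' in the present statement matches the hypothesis in \cite[Corollary 2.11]{Hor07}, i.e.\ that $\mathcal{F}$ is a subbundle of $T_X$; this is immediate from our definition of a regular foliation in Section 2.1.
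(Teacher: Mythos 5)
Your proposal is correct and matches the paper's own proof: the paper likewise applies Theorem \ref{rcfoliation} to get rational connectedness of a general leaf (and algebraic integrability) and then invokes \cite[Corollary 2.11]{Hor07} to produce the smooth morphism with $\mathcal{F}=T_{X/Y}$. You simply spell out the intermediate steps a bit more explicitly.
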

\begin{proof}
By Theorem \ref{rcfoliation}, a general leaf of $\mathcal{F}$ is rationally connected.
Therefore it follows  by \cite[Corollary 2.11]{Hor07}
\end{proof}

\begin{cor}
Let $f:X\rightarrow Y$ be a surjective morphism with connected fibers between smooth projective varieties.
If $-K_{X/Y}$ is nef and $\kappa(X, -K_{X/Y}) =\dim F$ for a general fiber $F$ of $f$, then there exists
a finite \'{e}tale morphism 
$\sigma:Y'\rightarrow Y$ such that $X \times_{Y} Y' \cong F \times Y'$.
\end{cor}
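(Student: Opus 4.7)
The plan is to apply Theorem \ref{rcfoliation_main} to the algebraically integrable foliation $\mathcal{F}$ on $X$ induced by $f$, and then upgrade the resulting structure to an étale-local trivialization. Note that $\mathrm{rk}\,\mathcal{F}=\dim F$ and every general fiber of $f$ is a compact leaf of $\mathcal{F}$. Passing to the equidimensional birational model $\pi:\widetilde X\to X$, $\widetilde f:\widetilde X\to Y$ provided by Proposition \ref{raynord_foliation}, one has $\pi^{*}K_{\mathcal{F}}\sim_{\mathbb{Q}} K_{\widetilde X/Y}-\mathrm{Ram}(\widetilde f)+E$ with $E$ $\pi$-exceptional effective and $K_{\widetilde X/Y}=\pi^{*}K_{X/Y}+E_{\pi}$ with $E_{\pi}$ $\pi$-exceptional effective. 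Since the Iitaka dimension of a $\mathbb{Q}$-divisor on $\widetilde X$ is unchanged after adding or subtracting a $\pi$-exceptional effective divisor, one obtains
\[
\kappa(X,-K_{\mathcal{F}})=\kappa\bigl(\widetilde X,\pi^{*}(-K_{X/Y})+\mathrm{Ram}(\widetilde f)\bigr)\geq\kappa(X,-K_{X/Y})=\dim F=\mathrm{rk}\,\mathcal{F}.
\]

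The principal technical obstacle is to show that $-K_{\mathcal{F}}$ itself is nef, as this is required in order to invoke Theorem \ref{rcfoliation_main}. Rewriting $\pi^{*}(-K_{\mathcal{F}})\sim_{\mathbb{Q}}\pi^{*}(-K_{X/Y})+\mathrm{Ram}(\widetilde f)-E-E_{\pi}$, the nefness of $-K_{X/Y}$ and the effectivity of $\mathrm{Ram}(\widetilde f)$ handle curves of $\widetilde X$ not contained in the exceptional/ramification locus. For curves inside this locus one appeals to Lemma \ref{ramification} to control the discrepancy-like combination $E+E_{\pi}-\mathrm{Ram}(\widetilde f)$, using that nefness of $-K_{\mathcal{F}}$ on $X$ need only be tested against $\pi_{*}$-images of non-$\pi$-exceptional curves on $\widetilde X$. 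This is the delicate part of the argument, since a priori $-K_{\mathcal{F}}$ is only the sum of the nef divisor $-K_{X/Y}$ and the effective ramification divisor, which need not be nef on its own.

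Once nefness is established, Theorem \ref{rcfoliation_main} forces equality $\kappa(X,-K_{\mathcal{F}})=\mathrm{rk}\,\mathcal{F}$ and yields that a general fiber $F$ is rationally connected with $-K_{F}$ big and semiample, and $-K_{\mathcal{F}}$ is semiample and abundant. By the same reasoning as in the preceding corollary for regular foliations, combined with \cite[Corollary~2.11]{Hor07}, $f$ is smooth, so $f:X\to Y$ is a smooth proper family of Fano manifolds with $-K_{X/Y}$ nef and fiberwise big. By standard rigidity results for smooth polarized Fano fibrations (using $-mK_{X/Y}$ for $m\gg 0$ as a relative polarization), the classifying map to the moduli space of polarized Fano manifolds isomorphic to $F$ is constant, so $f$ is isotrivial; the associated $\mathrm{Aut}(F,-K_{F})$-torsor over $Y$ becomes trivial after a finite étale base change $\sigma:Y'\to Y$, yielding $X\times_{Y}Y'\cong F\times Y'$.
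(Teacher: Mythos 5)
There are two genuine gaps.

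The first is the one you yourself flag as ``the principal technical obstacle'': nefness of $-K_{\mathcal{F}}$. Your decomposition $\pi^{*}(-K_{\mathcal{F}})\sim_{\mathbb{Q}}\pi^{*}(-K_{X/Y})+\mathrm{Ram}(\widetilde f)-E-E_{\pi}$ does not yield nefness: even on non-exceptional curves $C\subset\widetilde X$, the term $-E-E_{\pi}$ can make the intersection negative whenever $C$ meets the exceptional locus, and the effectivity of $\mathrm{Ram}(\widetilde f)$ does not rescue this. In general $K_{\mathcal{F}}=K_{X/Y}-R$ for an effective $R$ supported on the non-smooth locus of $f$, so $-K_{\mathcal{F}}$ is ``nef plus effective,'' which need not be nef. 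No amount of intersection-theoretic bookkeeping on the flattening closes this gap. The paper sidesteps it entirely: it cites \cite[Proposition 4.1]{EIM}, which under the hypothesis that $-K_{X/Y}$ is nef forces $R=0$, i.e.\ $K_{X/Y}=K_{\mathcal{F}}$. With that identity nefness of $-K_{\mathcal{F}}$ is immediate, and the comparison of Iitaka dimensions is automatic, so Theorem \ref{rcfoliation} applies with equality.

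The second gap is the endgame. After (hypothetically) obtaining that $f$ is smooth with rationally connected fibers and $-K_{X/Y}$ semiample, you assert that ``the classifying map to the moduli space of polarized Fano manifolds isomorphic to $F$ is constant'' and then trivialize an $\mathrm{Aut}(F,-K_{F})$-torsor after a finite \'etale cover. Two issues: (a) isotriviality is being asserted, not proved — nothing in your argument actually pins the classifying map; and (b) $\mathrm{Aut}(F)$ is typically positive-dimensional for a Fano $F$ (e.g.\ $\mathbb{CP}^{n}$), so an \'etale-locally trivial $F$-bundle need not become trivial after a finite \'etale base change — nontrivial $\mathbb{CP}^{1}$-bundles over $\mathbb{CP}^{1}$ are the obvious counterexamples. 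The nefness of $-K_{X/Y}$ is precisely what rules out such examples, but your torsor argument makes no use of it at that stage. Also, your invocation of \cite[Corollary 2.11]{Hor07} to get smoothness of $f$ presupposes that the foliation $\mathcal{F}$ induced by $f$ is regular, which is not given; the applicable branch of Theorem \ref{rcfoliation_main} is the ``compact leaf'' hypothesis, which gives algebraic integrability but not regularity. The paper instead routes the whole endgame through Ambro's theory of lc-trivial fibrations (\cite[Proposition 4.4 and Theorem 4.7]{Amb}), using the semiampleness and abundance of $-K_{X/Y}$ obtained from Theorem \ref{rcfoliation} to produce the finite \'etale trivialization directly, without any rigidity or moduli input.
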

\begin{proof}
Let $\mathcal{F}$ be a foliation induced by $f$.
By Theorem \ref{rcfoliation} and \cite[Proposition 4.1]{EIM}, 
$K_{X/Y} = K_{\mathcal{F}}$ and $-K_{X/Y}$ is semiample.
Therefore the corollary follows from \cite[Proposition 4.4 and Theorem 4.7]{Amb}. 
\end{proof}

\begin{ex}\cite[Example 6.1]{EIM}
Let $C$ be an elliptic curve and  
$D$ be a divisor on $C$ with $deg(D)=0$, 
$D \not\sim 0$, and $2D\sim 0$. 
Set $E : = \mathcal{O}_{C} \oplus \mathcal{O}_{C}(D)$ and $X:= \mathbb{P}(E)$.
Let $f : X \rightarrow C$ be the ruling.
Then $-K_{X/C} \sim \mathcal{O}_{\mathbb{P}(E)}(2)\otimes f^{*} \mathcal{O}_{C}(-D)$
and $-K_{X/C}$ is nef.
For any $m \in \mathbb{N}_{>0}$, 
\begin{align*}
\begin{split}
h^{0}(X,-mK_{X/C} )& = h^0(C,\Sym^{2m}(\mathcal{O}_{C} \oplus \mathcal{O}_{C}(D)) \otimes\mathcal{O}_{C}(-mD) ) \\
&=\sum_{i=-m}^{m}h^{0}(C,\mathcal{O}_{C}(iD) ) =2\lfloor m/2 \rfloor+1.
\end{split}
\end{align*}
We thus get $\kappa(X,-K_{X/C}) = 1 = \dim X -1$.
However $X$ is not isomorphic to  $C \times \mathbb{P}^{1}$. 
This gives a counter-example of \cite[Question 8.11]{Druel}.

\end{ex}

Finally, we propose a conjecture on a foliation with a nef anti-canonical bundle.
Even if $-K_X$ is nef, it is expected that the same thing as Corollary \ref{psef_example} holds.

\begin{conj}
\label{MRC_conj}
Let $X$ be a smooth $n$-dimensional projective variety with a nef anti-canonical bundle and 
$\mathcal{F}$ be a foliation on $X$.
Then $\mathcal{F}$ is induced by the MRC fibration of $X$
if and only if 
$\mu_{\alpha}^{min}(\mathcal{F}) >0$ and $K_{\mathcal{F}} \equiv K_{X} $, where $\alpha = H_1\cdots H_{n-1}$ for some ample divisors $H_1, \ldots, H_{n-1}$ on $X$.
\end{conj}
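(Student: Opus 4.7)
The plan is to mimic the structure of the proof of Corollary \ref{psef_example}, where the key engine was Theorem \ref{posiotive_mrc} identifying the positive part of an almost nef regular foliation with the relative MRC fibration of its algebraic part. Throughout I would exploit that $-K_X$ nef is enough to make the MRC fibration $\varphi : X \dashrightarrow W$ an honest smooth morphism (after a birational modification of $X$) with base $W$ having numerically trivial canonical bundle, indeed even $T_W$ numerically flat, by the theorems of Cao and Cao--H\"oring that generalize the case where $T_X$ itself is almost nef (\cite{HIM}, cited in the paper).

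\emph{Direction ``$\mathcal{F}$ induced by MRC $\Rightarrow$ slope and canonical conditions''.} Taking $\mathcal{F}=T_{X/W}$ with $f:X\to W$ a smooth representative of the MRC, the exact sequence
\[
0 \to T_{X/W} \to T_X \to f^{*}T_W \to 0
\]
together with the numerical flatness of $T_W$ gives $K_{\mathcal{F}} \equiv K_X$. For $\mu_{\alpha}^{\min}(\mathcal{F})>0$ I would argue as follows: restricting to a general fibre $F$, which is rationally connected with $-K_F$ nef, one must show that $T_F$ is $(H_1|_F,\ldots,H_{n-1}|_F)$-generically ample; this is the natural analogue of the last sentence of Corollary \ref{almost_nef_tangent}, but for rationally connected varieties with nef (rather than almost nef) anti-canonical bundle. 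Globalising via Proposition \ref{MR} and the Harder--Narasimhan filtration should then upgrade the fibrewise statement to $\mu_\alpha^{\min}(\mathcal{F})>0$ on $X$.

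\emph{Direction ``slope and canonical conditions $\Rightarrow \mathcal{F}$ is MRC''.} By Theorem \ref{CP15}, $\mu_\alpha^{\min}(\mathcal{F})>0$ forces $\mathcal{F}$ to be algebraically integrable with rationally connected leaves, inducing an almost holomorphic $g:X\dashrightarrow Z$. Since the MRC fibration contracts every rationally connected subvariety, $\varphi:X\dashrightarrow W$ factors through $g$, so $\mathcal{F} \subset \mathcal{F}_{\mathrm{MRC}}$ saturated in $T_X$. To close the gap between these two foliations, I would combine $K_{\mathcal{F}}\equiv K_X$ with the fact that $c_1(T_X/\mathcal{F}_{\mathrm{MRC}}) = -f^*c_1(T_W) \equiv 0$, yielding $c_1(\mathcal{F}_{\mathrm{MRC}}/\mathcal{F}) = 0$; since $\mathcal{F}_{\mathrm{MRC}}/\mathcal{F}$ is simultaneously a torsion-free quotient of $\mathcal{F}_{\mathrm{MRC}}$ and a subsheaf of $T_X/\mathcal{F}$, a slope comparison in the spirit of the uniqueness argument in Lemma \ref{Fujitafiltration}, feeding on $\mu_\alpha^{\min}(\mathcal{F})>0$ and on the ``neutral'' positivity of $\mathcal{F}_{\mathrm{MRC}}/\mathcal{F}$ inherited from a hermitian flat structure on $T_W$, should force $\mathcal{F}_{\mathrm{MRC}}/\mathcal{F}=0$. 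Alternatively one can try to reach the same conclusion by a dimension count via Proposition \ref{eim} applied to $-K_{\mathcal{F}}\equiv -K_X$.

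\emph{Main obstacle.} The central difficulty is the lack of a Fujita-type splitting (Theorem-Definition \ref{fujitadecom}) for the tangent sheaf in the $-K_X$ nef regime; this decomposition is what allowed us, in the almost nef setting, to extract the positive part canonically and identify it with the MRC (Theorem \ref{posiotive_mrc}). Without almost nefness of $T_X$, one must instead appeal to analytic positivity of $T_{X/W}$ or of $-K_{X/W}$ coming from recent work of Cao, P\u{a}un and Guenancia on positively curved singular hermitian metrics on the relative tangent sheaf of smooth morphisms with rationally connected fibres, and this analytic input will be the hardest ingredient to make effective enough both to establish $\mu_{\alpha}^{\min}(T_{X/W})>0$ directly and to rule out a strict inclusion $\mathcal{F} \subsetneq \mathcal{F}_{\mathrm{MRC}}$.
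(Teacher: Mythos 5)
This statement is labelled a \emph{conjecture} in the paper and carries no proof: the author explicitly proposes it as an open problem, motivated by the analogy with Corollary \ref{psef_example}, and does not attempt a demonstration. There is therefore no ``paper's own proof'' to compare against, and your text should not be read as a proof but rather as a strategy sketch for an open question.

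Read on those terms, your sketch is sensible and correctly identifies where the difficulty lies. The forward direction is the more tractable half: once one invokes the structure theory for $-K_X$ nef (Cao, Cao--H\"oring) to produce a smooth MRC model $f:X\to W$ with $T_W$ numerically flat, the exact sequence does give $K_{\mathcal{F}}\equiv K_X$, exactly as in the proof of Corollary \ref{psef_example}. But the claim $\mu_\alpha^{\min}(T_{X/W})>0$ is not a formal consequence of anything proved in the paper; in Corollary \ref{psef_example} it came for free from Theorem-Definition \ref{Fujita_positive} applied to the almost nef sheaf $T_X$, and without almost nefness that engine is unavailable. Your appeal to ``Harder--Narasimhan plus Proposition \ref{MR}'' does not bridge this: Proposition \ref{MR} converts a slope inequality into generic ampleness, not the other way round, so it cannot manufacture the inequality you need. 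In the backward direction, Theorem \ref{CP15} and the factorization through the MRC are fine, but the slope comparison you propose to kill $\mathcal{F}_{\mathrm{MRC}}/\mathcal{F}$ relies on the quotient $T_X/\mathcal{F}_{\mathrm{MRC}}$ carrying the same kind of flatness used in Lemma \ref{Fujitafiltration}; again that was supplied by almost nefness of $T_X$ via Theorem \ref{almost_nef}, and $-K_X$ nef alone does not give it. You flagged exactly this in your ``Main obstacle'' paragraph, so you have correctly located the genuine gap. The honest summary is that your sketch mirrors the proved case faithfully in structure, and the one missing ingredient, a positivity theorem for $T_{X/W}$ (or for the Fujita-type splitting of $T_X$) under $-K_X$ nef rather than $T_X$ almost nef, is precisely why the paper leaves this as a conjecture.
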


\renewcommand{\thesection}{\Alph{section}} \setcounter{section}{0}
\section{Appendix} \label{ruled_surface}
In this appendix, we study divisors of a ruled surface.
For more details, we refer the reader to \cite[Chapter 5.2]{Har}.
Let $C$ be a smooth curve, $E$ be a rank 2 vector bundle, $\pi : X:= \mathbb{P}(E) \rightarrow C$ be the ruled surface on $C$, and $F$ be a fiber of $\pi$.
By \cite[Proposition 2.8 and Notation 2.8.1]{Har}, we may assume that
$H^{0}(C,E) \neq 0$ and $H^{0}(C,E\otimes L) =0$ for any line bundle $L$ on $C$ with $\deg L <0$.
Then the N\'eron-Severi group $NS(X)$ is generated by $\xi := c_1(\mathcal{O}_{\mathbb{P}(E)}(1))$ and $f := c_1(F)$.
Set $e := -\deg E$, then $\xi^2=\deg(E)=-e$, $\xi f=1$, and $f^2=0$.

\begin{prop}\cite[Proposition 2.20 and Proposition 2.21]{Har}
\label{ample_div}
Let $D$ be a divisor on $X$ with $c_1(D) = a\xi + bf$.
\begin{enumerate}
\item If $e \ge 0$, then $D$ is ample if and only if $a>0$ and $b>ae$.
\item If $e<0$, then $D$ is ample if and only if $a>0$ and $2b>ae$.
\end{enumerate}
\end{prop}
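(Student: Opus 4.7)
The plan is to apply the Nakai--Moishezon criterion: on a smooth projective surface, $D$ is ample if and only if $D^{2}>0$ and $D\cdot Y>0$ for every irreducible curve $Y\subset X$. Using the intersection table $\xi^{2}=-e$, $\xi\cdot f=1$, $f^{2}=0$, I would first record the two basic quantities
\[
D^{2}=a(2b-ae),\qquad D\cdot F=a,
\]
so that ampleness forces $a>0$ in either case, and also forces $2b>ae$ in case (2) by positivity of $D^{2}$.

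The main step is the classification of irreducible curves on the ruled surface $X=\mathbb{P}(E)$, which is the content I would quote (or reprove) from Hartshorne, Chapter V.2. Under the normalization $H^{0}(C,E)\neq0$ and $H^{0}(C,E\otimes L)=0$ for $\deg L<0$, there is a distinguished section $C_{0}$ with $C_{0}\sim\xi$ and $C_{0}^{2}=-e$, and every irreducible curve $Y$ on $X$ is either $F$, the section $C_{0}$, or satisfies $Y\equiv a'\xi+b'f$ with $a'>0$ and
\[
b'\geq a'e\ \ (\text{if }e\geq0),\qquad 2b'\geq a'e\ \ (\text{if }e<0).
\]
The second set of inequalities, in the case $e<0$, comes from $Y^{2}=a'(2b'-a'e)\geq0$ together with $a'>0$; in the case $e\geq0$, it reflects that $C_{0}$ is the unique curve of negative self-intersection so $Y\cdot C_{0}\geq0$ for $Y\neq C_{0}$.

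With the classification in hand, the proof in case (1) reduces to checking $D\cdot F=a>0$, $D\cdot C_{0}=b-ae>0$, and $D\cdot Y=a'(b-ae)+ab'>0$ for all other irreducibles, the last being automatic from $a>0$, $b>ae\geq0$ and $b'\geq a'e\geq0$; the self-intersection $D^{2}=a(2b-ae)>0$ then follows since $2b-ae>ae\geq0$. Conversely, $D\cdot F>0$ and $D\cdot C_{0}>0$ give the two stated inequalities. In case (2), the argument is similar but must be done directly with the bound $2b'\geq a'e$: one computes
\[
2(D\cdot Y)=a'(2b-ae)+a(2b'-a'e),
\]
which is strictly positive whenever $a>0$, $2b>ae$ and $a'>0$, $2b'\geq a'e$, and conversely $D^{2}>0$ and $D\cdot F>0$ force $2b>ae$ and $a>0$.

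I expect the only delicate point to be the classification step, specifically justifying the lower bound on $b'$ for an arbitrary irreducible curve $Y\not\equiv F,C_{0}$; everything else is formal bookkeeping with the intersection numbers. Since this classification is precisely \cite[V.2.20, V.2.21]{Har}, invoking it reduces the proof to the straightforward Nakai--Moishezon calculation above.
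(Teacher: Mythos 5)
The paper offers no proof of its own here: Proposition \ref{ample_div} is stated purely as a citation to Hartshorne V.2.20--2.21, so the ``paper's proof'' is just Hartshorne's, and your outline is in fact the same strategy Hartshorne uses: Nakai--Moishezon on a surface plus the classification of irreducible curves on $\mathbb{P}(E)$. Your intersection arithmetic ($D^{2}=a(2b-ae)$, $D\cdot F=a$, $D\cdot C_{0}=b-ae$, and the expansion of $2(D\cdot Y)$) is correct, and the deduction of ampleness from the curve bounds is the standard bookkeeping.

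One point you should be more careful about is the heuristic you offer for the bound $2b'\geq a'e$ in the $e<0$ case. You derive it from ``$Y^{2}\geq 0$ for every irreducible curve'', but that statement is itself not elementary: on a general surface irreducible curves can have negative self-intersection, and the fact that this never happens on $\mathbb{P}(E)$ with $e<0$ is precisely a \emph{consequence} of the classification you are trying to justify, not an independent input. Hartshorne's actual proof of V.2.21(a) goes through sections of twists of $\Sym^{a'}E$, exploiting the normalization $H^{0}(E\otimes L)=0$ for $\deg L<0$ (equivalently, semistability of $E$ when $e\leq 0$), and is genuinely different from ``all self-intersections are nonnegative''. Similarly, for $e=0$ your phrase ``$C_{0}$ is the unique curve of negative self-intersection'' does not apply since $C_{0}^{2}=0$; the inequality $Y\cdot C_{0}\geq 0$ for $Y\neq C_{0}$ still holds because distinct irreducible curves intersect nonnegatively, so the conclusion survives but the stated reason does not. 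Since you explicitly flag the classification as the part you would quote from \cite{Har}, this is a presentation issue more than a logical gap, but as written the ``justification'' for the $e<0$ bound is circular and should either be dropped in favor of a bare citation or replaced by Hartshorne's cohomological argument.
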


Since $X$ is a surface, the movable cone of $X$ is a closure of the ample cone of $X$, hence we have the following proposition.

\begin{prop}
\label{psef_div}
Let $D$ be a divisor on $X$ with $c_1(D) = a\xi + bf$.
\begin{enumerate}
\item If $e \ge 0$, then $D$ is pseudo-effective if and only if $a\ge 0$ and $b\ge0$.
\item If $e<0$, then $D$ is pseudo-effective if and only if $a\ge0$ and $2b \ge ae$.
\end{enumerate}
\end{prop}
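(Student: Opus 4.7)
The plan is to exploit the duality between the pseudo-effective cone of divisors and the closed movable cone of curves. Since $X$ is a surface, the intersection pairing identifies $N^1(X)_{\mathbb{R}}$ with $N_1(X)_{\mathbb{R}}$, and the observation preceding the statement identifies $\mathrm{Mov}(X)$ with the closure of the ample cone. Combined with the standard characterization that a divisor $D$ is pseudo-effective if and only if $D \cdot \alpha \ge 0$ for every $\alpha \in \mathrm{Mov}(X)$, the problem reduces to finding all $(a,b)$ such that $(a\xi+bf)\cdot A \ge 0$ for every ample divisor $A$ on $X$.

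For the second step I would parametrize an ample class as $A = a'\xi + b'f$ using Proposition \ref{ample_div} and compute
\[
D \cdot A = -e a a' + a b' + a' b
\]
via $\xi^2 = -e$, $\xi \cdot f = 1$, $f^2 = 0$. In the case $e \ge 0$, the ample condition is $a' > 0$ and $b' > a'e$; substituting $b' = a'e + \epsilon$ with $\epsilon > 0$ reduces the intersection to $a\epsilon + a'b$, which is nonnegative for all positive $(a',\epsilon)$ precisely when $a \ge 0$ and $b \ge 0$. In the case $e < 0$, the ample condition is $a' > 0$ and $2b' > a'e$, and substituting $b' = a'e/2 + \epsilon$ reduces the intersection to $a\epsilon + a'(b - ae/2)$, yielding the conditions $a \ge 0$ and $2b \ge ae$. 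Each necessary inequality is obtained by sending one of the two free parameters to infinity while holding the other fixed, and the sufficient direction is immediate since the bilinear form is then manifestly nonnegative on the positive quadrant of $(a',\epsilon)$.

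The main obstacle is the invocation of the duality $\overline{\mathrm{Eff}}(X) = \mathrm{Mov}(X)^{\vee}$, which on a general variety rests on the theorem of Boucksom--Demailly--Paun--Peternell but on a surface is classical (Hodge index together with Kleiman's criterion, or the elementary observation that any movable class on a surface is a limit of ample classes and conversely every pseudo-effective divisor is a limit of effective $\mathbb{Q}$-divisors, which have nonnegative intersection with every ample class). Once this and the preceding remark are granted, the rest of the argument is purely a bilinear inequality computation, with no further geometric content beyond the explicit description of the ample cone supplied by Proposition \ref{ample_div}.
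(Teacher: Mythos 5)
Your proposal is correct and follows essentially the same route as the paper, which simply remarks before the proposition that on a surface the movable cone equals the closure of the ample cone and leaves the rest implicit. You make explicit the dualization of $\overline{\mathrm{Eff}}(X)$ against $\mathrm{Mov}(X)$ and carry out the bilinear inequality computation against the description of the ample cone from Proposition \ref{ample_div}; the arithmetic in both cases checks out.
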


\begin{cor}
\label{psef_ruled_surface}
$-K_{X/C}$ is nef if and only if $e\le0$, and $-K_{X/C}$ is big if and only if $e>0$.
In particular, $-K_{X/C}$ is pseudo-effective.
\end{cor}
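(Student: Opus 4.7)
The plan is to reduce everything to a numerical computation in $NS(X) = \mathbb{Z}\xi \oplus \mathbb{Z} f$ and then apply Propositions \ref{ample_div} and \ref{psef_div} that have just been established.

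First I would compute the class of $-K_{X/C}$. Using the standard formula for the relative canonical of a $\mathbb{P}^1$-bundle,
\[
K_{X/C} \;=\; K_X - \pi^{*}K_C \;\sim\; -2\xi + \pi^{*}\det E,
\]
so that $c_{1}(-K_{X/C}) = 2\xi + ef$ (recall $e = -\deg E$). Thus in the notation of Propositions \ref{ample_div} and \ref{psef_div} we are in the case $a = 2$, $b = e$.

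Next I would read off the nefness. Since nefness on a surface is the closure of the ample cone, Proposition \ref{ample_div} gives: when $e \ge 0$, $a\xi+bf$ is nef iff $a \ge 0$ and $b \ge ae$; when $e < 0$, it is nef iff $a \ge 0$ and $2b \ge ae$. Substituting $a = 2$, $b = e$: for $e \ge 0$ the condition $e \ge 2e$ forces $e = 0$, and for $e < 0$ the condition $2e \ge 2e$ is automatic. Combining, $-K_{X/C}$ is nef if and only if $e \le 0$.

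For bigness on a surface I would use that a divisor is big iff it lies in the interior of the pseudo-effective cone, which is described by Proposition \ref{psef_div}. For $e \ge 0$ the interior is $\{a>0,\ b>0\}$, so with $a=2$, $b=e$ we are inside iff $e > 0$. For $e < 0$ the interior is $\{a>0,\ 2b>ae\}$, and $2e > 2e$ fails, so $-K_{X/C}$ sits on the boundary and is not big. Hence $-K_{X/C}$ is big if and only if $e > 0$. Finally, the pseudo-effectivity assertion is automatic: if $e \le 0$ then $-K_{X/C}$ is nef, hence pseudo-effective; if $e > 0$ it is big, hence pseudo-effective.

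The argument is essentially bookkeeping in the N\'eron–Severi group, so I do not expect any serious obstacle; the only place to be careful is the sign convention for $e$ and the correct identification of the relative canonical as $-2\xi + \pi^{*}\det E$, after which everything follows by plugging $(a,b)=(2,e)$ into the two preceding propositions.
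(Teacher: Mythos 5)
Your proposal is correct and follows essentially the same route as the paper: compute $c_1(-K_{X/C}) = 2\xi + ef$ from the relative Euler sequence (the paper writes $-K_{X/C} \sim \mathcal{O}_{\mathbb{P}(E)}(2)\otimes \pi^{*}(\det E^{\vee})$), then read off nefness and bigness from Propositions \ref{ample_div} and \ref{psef_div} by substituting $(a,b)=(2,e)$. The paper leaves the cone-interior/closure bookkeeping implicit; you spell it out, which is fine and does not change the argument.
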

\begin{proof}
From $-K_{X/C} \sim \mathcal{O}_{\mathbb{P}(E)}(2)\otimes \pi^{*}(\det E^{\vee})$, we have $c_1(-K_{X/C}) = 2\xi + ef$, which completes the proof by Proposition \ref{ample_div} and \ref{psef_div}.
\end{proof}

\bibliographystyle{alpha}

\end{document}